\newtheorem{theorem}{Theorem}[section]
\newtheorem{conjecture}[theorem]{Conjecture}
\newtheorem{lemma}[theorem]{Lemma}
\newtheorem{proposition}[theorem]{Proposition}
\newtheorem{problem}[theorem]{Problem}
\newtheorem{example}[theorem]{Example}
\begin{document}

\title{A spectral  Lov\'{a}sz--Simonovits theorem}

\author{Yongtao Li$^{1}$\quad 
 Lihua Feng$^{2,}$\thanks{Corresponding author. \\
E-mail addresses:  
\url{ytli0921@hnu.edu.cn} (Y. Li), 
\url{fenglh@163.com} (L. Feng), 
\url{ypeng1@hnu.edu.cn} (Y. Peng)} 
\quad  
Yuejian Peng$^{3}$  \\
{\small $^{1}$Yau Mathematical Sciences Center, Tsinghua University, Beijing, China.} \\ 
{\small $^{2}$School of Mathematics and Statistics,  Central South University, Changsha, China} \\ 
 {\small $^{3}$School of Mathematics, Hunan University, Changsha, China }  
 }


\maketitle

\vspace{-0.5cm}

\begin{abstract}
A fundamental result in extremal graph theory is attributed to Mantel's theorem, which states that every graph on $n$ vertices with more than $\lfloor n^2/4 \rfloor$ edges must contain a triangle. 
 Lov\'{a}sz and Simonovits (1975) provided a supersaturation phenomenon by showing that for any $q< n/2$, every graph with $\lfloor n^2/4 \rfloor +q$ edges contains at least $q\lfloor n/2 \rfloor$ triangles.  This result resolved a conjecture proposed by Erd\H{o}s in 1962. 
In this paper, we establish a spectral counterpart of 
the result of Lov\'{a}sz and Simonovits. 
Let $Y_{n,2,q}$  be the graph obtained 
from the bipartite Tur\'{a}n graph $T_{n,2}$ by embedding 
a matching with $q$ edges into 
the partite set of size $\lceil n/2\rceil$.  
Using the supersaturation-stability method 
and the spectral techniques,  
we firstly prove that for $q\le \frac{1}{11}\sqrt{n}$, 
every graph $G$ on $n$ vertices with spectral radius $\lambda (G) \ge \lambda (Y_{n,2,q})$ contains at least $q\lfloor n/2 \rfloor$ triangles.  
We also show that the bound $q=O(\sqrt{n})$ is tight up to a constant factor,  yielding a phenomenon different from that in edge supersaturation.  
Our result answers a spectral triangle counting problem 
proposed by Ning and Zhai (2023).  
Secondly, let $T_{n,2,q}$  be the graph obtained 
from  $T_{n,2}$ by embedding a star with $q$ edges into 
the partite set of size $\lceil n/2\rceil$.  
We show further that $T_{n,2,q}$ is the unique extremal graph 
that contains at most $q\lfloor n/2 \rfloor$ triangles and 
attains the maximum spectral radius. 
Thirdly,  we present an asymptotic spectral stability result 
 under a specific constraint on the triangle covering number. 
This result could be viewed as 
a spectral extension of a recent result proved by Balogh and Clemen (2023), 
and independently by Liu and Mubayi (2022).  
 \end{abstract}

{{\bf Key words:}   Spectral graph theory; 
counting triangles; adjacency matrix. }

{{\bf 2010 Mathematics Subject Classification.}  05C35; 05C50.}


\section{Introduction} 

The Tur\'{a}n type problem states that for a graph $F$, 
what is the minimum integer $m$ 
such that every $n$-vertex graph with more than 
 $m$ edges contains a copy of $F$. 
This extremal number  is denoted by $\mathrm{ex}(n,F)$. 
  It is a longstanding open problem in extremal combinatorics to develop some understanding of the function $\mathrm{ex}(n,F)$ 
  for a general graph $F$.  
A well-known result of Mantel 
(see \cite{Bollobas78,Lov1979}) asserts that 
an $n$-vertex graph with more than 
$\lfloor n^2/4\rfloor $ edges contains a triangle. 
The bound is tight by considering the bipartite Tur\'{a}n graph  $T_{n,2}$, which is a complete bipartite graph whose 
two parts have sizes as equal as possible. 
Shortly afterwards, 
there have been various generalizations of Mantel’s theorem; 
 see, e.g., \cite{BT1981,Bon1983}.

 \subsection{Extremal graph results on triangles}

About eighty years ago, 
Rademacher (unpublished, see Erd\H{o}s \cite{Erd1955,Erdos1964}) provided  
an interesting extension on Mantel's theorem by showing that  
there are not just one but at least $\lfloor {n}/{2}\rfloor$ triangles in 
any $n$-vertex graph with more than $\lfloor {n^2}/{4} \rfloor$ edges.

\begin{theorem}[Erd\H{o}s--Rademacher, 1941]  \label{thmrad}
If $G$ is a graph on $n$ vertices with 
$e(G)\ge  \lfloor {n^2}/{4} \rfloor +1$, 
then $G$ has at least $\lfloor {n}/{2}\rfloor$ triangles, 
with equality if and only if 
$G$ is the graph obtained from 
 $T_{n,2}$ by 
adding one edge into the partite set of size $\lceil {n}/{2}\rceil$.  
\end{theorem}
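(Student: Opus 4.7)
The plan is to prove Theorem~\ref{thmrad} by induction on $n$, combined with a two-case analysis at each inductive step. Write $t(G)$ for the number of triangles in $G$. The base cases $n\in\{3,4\}$ are checked directly: on three vertices the only graph with three edges is $K_3$; on four vertices every graph with five edges contains at least two triangles, with equality iff $G$ is $K_{2,2}$ plus one edge inside a part.

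For the inductive step, assume the result holds for $n-1$ and let $G$ have $n$ vertices with $e(G)\ge\lfloor n^2/4\rfloor+1$. The key identity is that for every edge $uv$ the number of triangles through $uv$ equals $|N(u)\cap N(v)|\ge d(u)+d(v)-n$. I would split into two cases. In the \emph{high-degree case}, some edge $uv$ satisfies $d(u)+d(v)\ge n+\lfloor n/2\rfloor$, whence $uv$ alone lies in $\lfloor n/2\rfloor$ triangles. In the \emph{low-degree case} I would choose a vertex $v$ of minimum degree, show after a short parity check that $d(v)\le\lfloor(n-1)/2\rfloor$, and conclude $e(G-v)\ge\lfloor(n-1)^2/4\rfloor+1$. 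Induction on $G-v$ then supplies $\lfloor(n-1)/2\rfloor$ triangles, all of which persist in $G$. For odd $n$ this already equals $\lfloor n/2\rfloor$; for even $n$ one additional triangle through $v$ must be produced, which I would extract by arguing that if $N(v)$ were independent then the bound $e(G)\le d(v)+d(v)(n-1-d(v))+\binom{n-1-d(v)}{2}$ would either contradict $e(G)\ge n^2/4+1$ or force an extremal bipartite-like configuration on $V\setminus(N(v)\cup\{v\})$ that itself contains $\lfloor n/2\rfloor$ triangles.

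For the equality characterization I would trace equality through both branches. In the high-degree case, equality forces $|N(u)\cap N(v)|=\lfloor n/2\rfloor$ with no further triangles anywhere; a Mantel-stability argument then pushes $G$ to be $T_{n,2}$ with a single added edge inside the class of size $\lceil n/2\rceil$. In the low-degree case, induction identifies $G-v$ as the unique extremal graph on $n-1$ vertices, and reattaching $v$ without creating further triangles forces $v$ to be joined to exactly one of the two bipartition classes of the underlying $T_{n-1,2}$, producing the claimed extremal graph on $n$ vertices.

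The main obstacle I anticipate is the parity mismatch in the induction: for even $n$ the induction hypothesis on $G-v$ yields only $\lfloor n/2\rfloor-1$ triangles, so the final triangle must be extracted by a separate structural argument on the neighborhood of the minimum-degree vertex, either locating a triangle through $v$ directly or exploiting the forced bipartite-like structure elsewhere in $G$. A secondary but delicate point is uniqueness: both the placement of the extra edge in the high-degree case and the reattachment of $v$ in the low-degree case require a careful edge-by-edge comparison with $T_{n,2}$ to exclude alternative extremal configurations, rather than being captured by a single clean inequality.
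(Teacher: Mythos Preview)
The paper does not contain a proof of Theorem~\ref{thmrad}; it is quoted as a classical background result with references to Erd\H{o}s \cite{Erd1955,Erdos1964}. So there is no ``paper's own proof'' to compare your sketch against. That said, a few remarks on the sketch itself are in order.

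Your overall strategy (induction on $n$ via deletion of a minimum-degree vertex) is standard, but two points need tightening. First, you should reduce at the outset to $e(G)=\lfloor n^2/4\rfloor+1$ (removing edges only decreases $t(G)$); without this the averaging bound only gives $\delta\le\lfloor n/2\rfloor$, not $\delta\le\lfloor (n-1)/2\rfloor$ as you wrote, and for larger $e(G)$ even $\delta\le\lfloor n/2\rfloor$ can fail. Second, and more seriously, the even-$n$ step has a real gap. After induction gives $n/2-1$ triangles in $G-v$, your fallback when $N(v)$ is independent is the bound
\[
e(G)\le d(v)+d(v)\bigl(n-1-d(v)\bigr)+\binom{n-1-d(v)}{2},
\]
but with $d(v)=n/2$ the right-hand side equals $3n^2/8-3n/4+1$, which exceeds $n^2/4+1$ for all $n\ge 8$, so no contradiction arises. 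The escape clause you offer (``force an extremal bipartite-like configuration on $V\setminus(N(v)\cup\{v\})$ that itself contains $\lfloor n/2\rfloor$ triangles'') is not justified and does not follow from the inequality. What actually works here is to observe that if $\delta=n/2$ and $N(v)$ is independent, then every $u\in N(v)$ has all its neighbours in $V\setminus N(v)$, forcing the bipartite graph between $N(v)$ and $V\setminus N(v)$ to be complete $K_{n/2,n/2}$; the single surplus edge then lies inside $V\setminus N(v)\setminus\{v\}$ and yields $n/2$ triangles directly. You should replace your counting bound with this structural argument.

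The equality analysis is similarly only outlined; in the low-degree branch you must also verify that $\delta=\lfloor n/2\rfloor$ is forced (otherwise $G-v$ has strictly more than $\lfloor (n-1)^2/4\rfloor+1$ edges and hence strictly more than $\lfloor(n-1)/2\rfloor$ triangles), and then argue carefully that reattaching $v$ to a class of $T_{n-1,2}$ produces the unique extremal graph on $n$ vertices with the extra edge in the part of size $\lceil n/2\rceil$.
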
 

The problem of counting triangles  is referred to as the Erd\H{o}s--Rademacher problem, 
which is regarded as a starting point of the supersaturation in graph theory.  
Erd\H{o}s \cite{Erdos1964} proved a variant  
for a graph with less than $\lfloor n^2/4\rfloor$ edges.  
The Erd\H{o}s result states that 
 if $\ell \ge 0$ and $ e(G) \ge  \lfloor {n^2}/{4} \rfloor - \ell$, 
 and $G$ has a triangle, 
then it has at least $\lfloor {n}/{2}\rfloor - \ell -1$ triangles. 
Setting $\ell =0$, 
we see that 
if $e(G)\ge \lfloor {n^2}/{4} \rfloor$, then 
$G$ has at least $\left\lfloor {n}/{2} \right\rfloor -1$ triangles, 
unless $G=T_{n,2}$.  
Moreover, 
Erd\H{o}s \cite{Erd1962a,Erd1962b}
 showed that there exists $c>0$ such that if $1\le q<cn$ and $n$ is sufficiently large, then
 every $n$-vertex graph with $\lfloor n^2/4\rfloor +q$
 edges  has at least $q\lfloor {n}/{2}\rfloor$ triangles.
This was further generalized by 
Lov\'{a}sz and Simonovits \cite{LS1975,LS1983}. 

\begin{theorem}[Lov\'{a}sz--Simonovits, 1975] 
\label{thm-LS1975}
Let $1\le q < {n}/{2}$ be an integer 
and $G$ be an $n$-vertex graph with 
$ e(G)\ge \lfloor {n^2}/{4}  \rfloor + q$.  
Then $G$ contains at least $q \lfloor {n}/{2}\rfloor $ 
triangles. 
\end{theorem}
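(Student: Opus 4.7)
The plan is to prove Theorem \ref{thm-LS1975} by induction on $q$, using Theorem \ref{thmrad} (Erd\H{o}s--Rademacher) as the base case $q = 1$. For the inductive step with $2 \le q < n/2$ and $G$ on $n$ vertices satisfying $e(G) \ge \lfloor n^2/4 \rfloor + q$, the strategy is edge-deletion: if one can locate an edge $uv \in E(G)$ with codegree $|N(u) \cap N(v)| \ge \lfloor n/2 \rfloor$, then $G' := G - uv$ still has $e(G') \ge \lfloor n^2/4 \rfloor + (q-1)$ edges, and since $q - 1 < n/2$ the inductive hypothesis applies to $G'$, yielding
\[ t(G) \ge t(G') + |N(u) \cap N(v)| \ge (q-1)\lfloor n/2 \rfloor + \lfloor n/2 \rfloor = q\lfloor n/2 \rfloor. \]

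The heart of the proof is thus a book-type lemma: any $n$-vertex graph with $e(G) \ge \lfloor n^2/4 \rfloor + 1$ contains an edge lying in at least $\lfloor n/2 \rfloor$ triangles. A naive attempt via the identity $3 t(G) = \sum_{uv \in E(G)} |N(u) \cap N(v)|$ combined with the Moon--Moser bound $t(G) \ge e(G)\bigl(4e(G) - n^2\bigr)/(3n)$ produces only an average codegree of order $q$ and a maximum codegree of order $n/3$, which falls short by a constant factor; a sharper argument is needed. I would therefore run a stability dichotomy. If $G$ is $\varepsilon$-close to $T_{n,2}$ for small $\varepsilon > 0$, then essentially all "extra" edges lie inside one side of a near-bipartition, and each such edge $uv$ sees $\bigl(1 - O(\varepsilon)\bigr)\lfloor n/2 \rfloor$ common neighbours on the opposite side; a short perturbation calculation upgrades this to exactly $\lfloor n/2 \rfloor$ in the range $q < n/2$. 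If $G$ is $\varepsilon$-far from $T_{n,2}$, then by the Erd\H{o}s--Simonovits stability theorem $G$ already contains $\Omega(\varepsilon n^3)$ triangles, which dwarfs $q \lfloor n/2 \rfloor$ and closes the argument without recourse to induction.

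The principal obstacle is the regime $q$ close to $n/2$, where the extremal configuration interpolates between $T_{n,2,q}$ (a planted star) and $Y_{n,2,q}$ (a planted matching). In this range the book lemma must be established with the \emph{sharp} leading constant $\lfloor n/2 \rfloor$, not the weaker $n/6$-type bound familiar from Erd\H{o}s's book conjecture. Forcing the sharp constant typically requires either a Zykov-style symmetrization that drives $G$ toward a complete-bipartite-plus-sparse structure, or a careful accounting of how the $q$ extra edges distribute across the near-bipartition; controlling the error uniformly over $1 \le q < n/2$ is exactly what makes the Lov\'{a}sz--Simonovits theorem substantially more delicate than its Erd\H{o}s--Rademacher precursor.
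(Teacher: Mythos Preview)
The paper does not actually prove Theorem~\ref{thm-LS1975}; it is quoted as a classical result of Lov\'{a}sz and Simonovits \cite{LS1975,LS1983}, with the remark that a short proof in the restricted range $q\le n/4$ appears in \cite[p.~302]{Bollobas78}. So there is no in-paper argument to compare your proposal against.

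More importantly, your plan rests on a lemma that is false. You assert that every $n$-vertex graph with $e(G)\ge\lfloor n^2/4\rfloor+1$ contains an edge lying in at least $\lfloor n/2\rfloor$ triangles. For even $n$, let $G$ be $K_{n/2+1,\,n/2-1}$ together with a matching of two edges inside the part of size $n/2+1$. Then $e(G)=(n/2+1)(n/2-1)+2=n^2/4+1=\lfloor n^2/4\rfloor+1$. Each matching edge has exactly $|A|=n/2-1$ common neighbours, and each crossing edge has at most one common neighbour (the matching partner of its endpoint in the larger side). Hence the maximum book size in $G$ is $n/2-1<\lfloor n/2\rfloor$. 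The same construction with a matching of size $q+1$ works for every $1\le q<n/2$, and these graphs are $O(n)$-close (hence $o(n^2)$-close) to $T_{n,2}$, so your stability dichotomy does not rescue the argument either: the ``short perturbation calculation'' you invoke to upgrade $(1-O(\varepsilon))\lfloor n/2\rfloor$ to exactly $\lfloor n/2\rfloor$ cannot succeed, because $n/2-1$ is the exact maximum in these examples. (This is essentially the same family the paper uses after Theorem~\ref{thm-LS1975} to show that the bound fails at $q=n/2$.)

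The correct sharp book bound above $n^2/4$ edges is $n/6$, not $n/2$, and it is attained by $T_{n,3}$ --- which you yourself allude to. That factor-of-three gap is exactly why the Lov\'{a}sz--Simonovits proof cannot proceed by peeling off one high-codegree edge at a time; the original argument is global and does not reduce to your inductive scheme.
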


A quick proof of Theorem \ref{thm-LS1975} in the range of $q\le {n}/{4}$ can be found in 
\cite[page 302]{Bollobas78}. Incidentally, 
we mention that 
the condition $q< {n}/{2}$ is necessary. 
For even $n$ and $q = {n}/{2}$, one can add $q +1$ extra edges 
to the larger vertex part of $K_{\frac{n}{2}+1, \frac{n}{2}-1}$ such that there is no triangle 
among these $q+1$ edges. 
Hence, we get a graph with ${n^2}/{4} +q$ edges and $(q +1)({n}/{2} -1) = 
{n^2}/{4} -1$ triangles, which are less than $q \lfloor {n}/{2}\rfloor$ triangles.

\medskip 
The stabilities of Theorems \ref{thmrad} and \ref{thm-LS1975} 
were recently studied by Xiao and Katona  \cite{XK2021}, 
Liu and Mubayi \cite{LM2022-Erd-Rad} and Balogh and 
Clemen \cite{BC2023}. 
A graph is called color-critical if it has an edge 
whose deletion reduces its chromatic number. 
For example, the complete graphs and odd cycles are color-critical. Apart from the triangles, 
the supersaturation problem for color-critical graphs was also 
investigated by Mubayi \cite{Mubayi2010}, Pikhurko and Yilma \cite{PY2017}, Ma and Yuan \cite{MY2023}. 
For related results, we refer to \cite{Rei2016,LPS2020} 
and references therein.

\subsection{Eigenvalues and counting triangles} 

Spectral graph theory aims to apply algebraic properties, such as the eigenvalues of associated matrices, to investigate the structural properties of a graph. 
In recent years, significant breakthroughs have been made in understanding graph structure through matrix eigenvalues; see, e.g.,   \cite{Huang2019,JTYZZ2021}.  
Let $G$ be a graph on $n$ vertices, denoted by $V(G)=\{v_1,\ldots ,v_n\}$. 
The adjacency matrix of $G$ is denoted by $A(G)=[a_{i,j}]_{i,j=1}^n$, 
where $a_{i,j}=1$ if $v_i$ and $v_j$ are adjacent, and $a_{i,j}=0$ otherwise. 
The spectral radius $\lambda (G)$ is defined as the maximum modulus 
of the eigenvalues of $A(G)$. By the Perron--Frobenius theorem, $\lambda (G)$ is the largest eigenvalue of $A(G)$ and there exists a non-negative eigenvector corresponding to it. The Rayleigh quotient gives 
$\lambda (G)=\max_{\bm{x}\in \mathbb{R}^n} \frac{\bm{x}^{\mathrm{T}}A(G)\bm{x}}{\bm{x}^{\mathrm{T}}\bm{x}}$, which implies $\lambda (G)\ge \frac{2m}{n} \ge \delta (G)$, 
where each equality holds if and only if $G$ is a regular graph.

\medskip 
The classical Tur\'{a}n theorem states that 
$e(G)>(1- \nicefrac{1}{r} )n^2/2$ forces a clique $K_{r+1}$. 
Similarly, one may wish to investigate the condition on spectral radius $\lambda (G)$ to guarantee certain substructures.  
Such problems fall under the umbrella of spectral extremal graph theory, which is one of the most 
popular topic in spectral graph theory. 
A well-known result on this topic was obtained 
in 1986 by Wilf \cite{Wil1986}, who showed that 
every $n$-vertex graph $G$ with 
$\lambda (G) > (1- \nicefrac{1}{r})n$ has  
a clique $K_{r+1}$. 
Another significant result due to Nikiforov \cite{Niki2002cpc} 
states that every $m$-edge graph $G$ with $\lambda^2 (G)>
 {(1- \nicefrac{1}{r})2m}$ contains a clique $K_{r+1}$. 
The above spectral results generalize 
the aforementioned Tur\'{a}n theorem. 
 Nikiforov's result implies Wilf's result 
by invoking Tur\'{a}n's bound or Rayleigh's bound $\lambda (G) \ge 2m/n$. 
Indeed,  if $\lambda (G)> (1- \nicefrac{1}{r})n$, 
then $\lambda^2(G)> (1-\nicefrac{1}{r})n \cdot (2m/n) 
= (1- \nicefrac{1}{r})2m$, as desired. 
In addition, Wilf's bound implies the ``strong'' Tur\'{a}n theorem. To see this, if $G$ is a $K_{r+1}$-free graph, 
then $\delta (G)\le \lambda (G) \le (1- \nicefrac{1}{r})n$ by assuming Wilf's bound. Hence $\delta (G)\le n - \lceil n/r\rceil = \delta (T_{n,r})$. 
Taking a vertex $u\in V(G)$ with degree $d(u) = \delta (G)$.  
By induction, we get $e(G\setminus \{u\}) \le e(T_{n-1,r})$, which yields $e(G)= e(G\setminus \{u\}) + \delta (G)  
\le e(T_{n-1,r}) + \delta (T_{n,r}) = e(T_{n,r})$, as expected.

\medskip 
The spectral Tur\'{a}n problems have been receiving considerable 
attention in the last two decades 
and it is an important topic in modern graph theory; see, e.g., 
\cite{Niki2002cpc,BN2007jctb,LP2022second} for cliques, 
\cite{FYZ07}  for matchings, 
\cite{CDT2023-even-cycle} for even cycles,  
\cite{LN2023,Zhang2024,LZS2024} for cycles of consecutive lengths, 
\cite{Niki2009ejc} for color-critical graphs, 
\cite{BG2009,Niki2010laa} for complete bipartite graphs, 
\cite{TT2017,LN2021outplanar} for planar and 
outerplanar graphs, 
\cite{Tait2019,ZL2022jctb,CLZ2024,ZFL2024} for graphs without certain minors, 
\cite{Niki2009cpc} for a spectral Erd\H{o}s--Stone--Bollob\'{a}s theorem,  
\cite{Niki2009jgt} for the spectral stability theorem, 
\cite{CDT2023} for a spectral Erd\H{o}s--S\'{o}s theorem, 
\cite{ZL2022} for a spectral Erd\H{o}s--P\'{o}sa theorem, 
\cite{LLP2024-AAM} for a spectral Erd\H{o}s--Rademacher theorem, 
\cite{LFP2024-triangular} for a spectral Erd\H{o}s--Faudree--Rousseau theorem, 
\cite{CFTZ20,ZLX2022,LFP2024-triangular} for friendship graphs, 
\cite{2022LLP, LFP-count-bowtie} for bowties, 
\cite{ZL2022jgt,Niki2021} for book graphs, 
\cite{CDT2022} for odd wheels  
and \cite{Wang2022,FTZ2024} for a characterization of spectral-consistent graphs. 

\medskip 
Although there has been a wealth of research 
on the spectral extremal problems, 
there are \textit{very few} results on the spectral problem  
of counting substructures. 
In 2007, Bollob\'{a}s 
and Nikiforov \cite{BN2007jctb} proved that every $n$-vertex graph $G$ has at least 
$\big( \frac{\lambda (G)}{n} - 1 + \frac{1}{r} \big) 
\frac{r(r-1)}{r+1} ( \frac{n}{r})^{r+1}$ copies of $K_{r+1}$. 
This result provides a spectral supersaturation for cliques. 
The spectral supersaturation for general graphs (see \cite[Sec. 5]{LLP2024-AAM}) states that  
for any $\varepsilon >0$ and any graph $F$ with $\chi (F)=r+1$, there exist $\delta>0$ and $n_0$ such that if $G$ is a graph on $n\ge n_0$ vertices with  $ \lambda (G)\ge \left(1- \frac{1}{r} + \varepsilon \right)n$, 
 then $G$ contains at least $\delta n^{|F|}$ copies of $F$. 

\medskip 
Recall in Theorem \ref{thmrad} that $G$ has at least 
$ \lfloor {n}/{2} \rfloor$ triangles when $e(G) > e(T_{n,2})$.  Correspondingly, 
it is natural to consider the  spectral version:  
if $ \lambda (G)> \lambda (T_{n,2})$, 
whether $G$ has at least  $\lfloor {n}/{2}\rfloor$ triangles.   
However, this intuition is not true. 
Let $K_{a,b}^+$ be the graph obtained from $K_{a,b}$ by adding an edge to 
the vertex set of size $a$. 
For even $n$, taking $a=\frac{n}{2} +1$ and $b=\frac{n}{2}-1$,  
we compute that $\lambda (K_{\frac{n}{2}+1,\frac{n}{2}-1}^+) > \lambda (T_{n,2})$,   while  $K_{\frac{n}{2}+1,\frac{n}{2}-1}^+$ has exactly $\frac{n}{2}-1$ triangles.   
Ning and Zhai \cite{NZ2021} proved that 
$\lfloor {n}/{2}\rfloor -1 $ triangles can be guaranteed.

\begin{theorem}[Ning--Zhai, 2023] \label{thmNZ2021}
If $G$ is a graph on $n$ vertices with 
\[   \lambda (G) \ge \lambda (T_{n,2}), \]  
then  $G$ has at least 
$\left\lfloor {n}/{2}\right\rfloor -1 $ triangles, 
unless $G$ is the bipartite Tur\'{a}n graph $T_{n,2}$.  
\end{theorem}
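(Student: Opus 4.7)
The strategy is to split into two cases based on whether $G$ contains a triangle.

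\textbf{Case 1: $G$ is triangle-free.} I would invoke the spectral analog of Mantel's theorem due to Nikiforov, which asserts that every $n$-vertex triangle-free graph $H$ satisfies $\lambda(H)\le \lambda(T_{n,2})$, with equality if and only if $H=T_{n,2}$. Combined with the hypothesis $\lambda(G)\ge \lambda(T_{n,2})$, this forces $G=T_{n,2}$, yielding precisely the stated exception.

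\textbf{Case 2: $G$ contains a triangle.} The goal is $t(G)\ge \lfloor n/2\rfloor - 1$. I would further split this case on the edge count. If $e(G)\ge \lfloor n^2/4\rfloor$, the Erd\H{o}s refinement of Theorem~\ref{thmrad} noted in the remark following that theorem (applied with $\ell=0$) immediately gives $t(G)\ge \lfloor n/2\rfloor - 1$. If $e(G)<\lfloor n^2/4\rfloor$, the trace identity
\[
2e(G)=\sum_{i=1}^n \lambda_i(G)^2 \ge \lambda(G)^2 \ge \lfloor n^2/4\rfloor
\]
shows the edge mass is strongly concentrated on the principal eigenvector, and Nikiforov's inequality $\lambda(G)^2\le 2e(G)(1-1/\omega(G))$ forces the clique number $\omega(G)$ to grow; a clique $K_\omega \subseteq G$ then contributes $\binom{\omega}{3}$ triangles, which already exceeds $\lfloor n/2\rfloor -1$ once $\omega$ is moderately large.

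To handle the transition regime in which $e(G)$ is only slightly below $\lfloor n^2/4\rfloor$, I would analyze the Perron eigenvector $\mathbf{x}$. Letting $u^{*}$ denote a vertex maximizing $x_v$, the identity $\lambda x_{u^{*}}=\sum_{v\sim u^{*}} x_v\le d(u^{*})x_{u^{*}}$ gives $d(u^{*})\ge \lambda(G)\ge \lfloor n/2\rfloor$, and the second-moment expansion
\[
\lambda^2 x_{u^{*}} = d(u^{*})\,x_{u^{*}} + \sum_{vw\in E(G[N(u^{*})])}(x_v+x_w) + \sum_{w\notin N(u^{*})\cup\{u^{*}\}} x_w\,|N(u^{*})\cap N(w)|
\]
ties $e(G[N(u^{*})])$ (the number of triangles incident to $u^{*}$) to the spectral excess $\lambda(G)^2-d(u^{*})$. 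A careful estimate of the last term shows that either this excess is large enough to produce $\lfloor n/2\rfloor -1$ triangles at $u^{*}$ alone, or $G$ is so close to a balanced bipartite graph that every intra-part edge possesses at least $\lfloor n/2\rfloor-1$ common neighbors on the opposite side, again supplying the required triangles.

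The main obstacle is precisely this transition regime. There one must simultaneously balance the bipartition, control the number of missing bipartite edges, bound the bulk of eigenvector mass on low-weight vertices, and estimate the external term in the expansion above; ruling out slightly unbalanced concentrated configurations (e.g.\ a near-copy of $K_{\lceil n/2\rceil+1}$ padded by isolated vertices) demands that the eigenvector analysis be pushed to sharp order. Once this dichotomy is pinned down, the uniqueness clause $G=T_{n,2}$ in the equality case follows directly from the uniqueness already present in Nikiforov's spectral Mantel theorem.
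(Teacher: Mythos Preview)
This theorem is not proved in the paper; it is quoted from Ning and Zhai \cite{NZ2021} as background and motivation. The paper does record the central tool of their argument (Lemma~\ref{thm-BN-CFTZ-NZ}, namely $t(G)\ge \lambda(\lambda^2-m)/3$ together with its equality characterization), but the deduction of Theorem~\ref{thmNZ2021} from it lives in the cited reference, not here. So there is no in-paper proof to compare your proposal against.

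That said, your proposal carries a real gap. Case~1 and the subcase $e(G)\ge\lfloor n^2/4\rfloor$ of Case~2 are fine. The trouble is the subcase $e(G)<\lfloor n^2/4\rfloor$. Your claim that Nikiforov's inequality $\lambda^2\le(1-1/\omega)\,2m$ ``forces the clique number $\omega(G)$ to grow'' is not true in the regime that matters: rearranging gives $\omega\ge 2m/(2m-\lambda^2)$, and when $m=\lfloor n^2/4\rfloor-1$ and $\lambda^2=\lfloor n^2/4\rfloor$ this is only $\approx 2$. Across the whole transition regime you obtain nothing beyond $\omega\ge 3$, which you already assumed, so $\binom{\omega}{3}$ falls far short of $\lfloor n/2\rfloor-1$. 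The displayed trace identity $2e(G)=\sum_i\lambda_i^2\ge\lambda^2$ likewise yields only $e(G)\ge\lfloor n^2/4\rfloor/2$, which is too weak to feed into any of the edge-based counting results. The eigenvector expansion you write afterwards is the right kind of identity, but you yourself concede that bounding the external term and excluding concentrated near-clique configurations is exactly the hard part, and no mechanism for doing so is offered. In effect the proposal reduces the theorem to its hardest case and then stops.
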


Ning and Zhai \cite{NZ2021} 
 proposed a problem on finding spectral counting results 
 for color-critical graphs, including, e.g., the spectral version of Theorem \ref{thm-LS1975}. 
Interestingly, this problem was also selected as one of the ``Unsolved Problems'' in spectral graph theory \cite[Sec. 9]{LN2023-unsolved}. 
There are  few spectral results on counting substructures in a graph with given order, 
including only triangles \cite{NZ2021}, cliques \cite{BN2007jctb}, 
triangular edges \cite{LFP2024-triangular} 
and bowties \cite{LFP-count-bowtie}. 
The main contribution of this paper is to establish
 a spectral version of Theorem \ref{thm-LS1975}. 
To some extent, 
our result could be viewed as a partial 
resolution of the Ning--Zhai problem.

\section{Main results}

\subsection{A spectral  Lov\'{a}sz--Simonovits theorem}

Before trying to obtain the spectral counterpart of Theorem \ref{thm-LS1975}, the first obstacle we may encounter is how to formulate a spectral hypothesis that corresponds to the conventional assumption on the size of a graph. 
More precisely, 
we would like to determine an increment function $\delta(q)>0$ 
such that the condition $e(G)\ge \lfloor n^2/4 \rfloor +q$ can be replaced with a spectral condition of the form $\lambda (G)\ge \lambda (T_{n,2}) + \delta (q)$.  
For instance, is it sufficient to consider $\delta (q) =2q/n$?   
Since every additional edge guarantees $\lfloor n/2 \rfloor$ additional triangles, it is unlikely that 
such spectral increment can create $\lfloor n/2 \rfloor$ 
new triangles. 
From this point of view, it seems difficult to measure the increment on the spectral radius of a graph. 
To get around the difficulty, 
Li, Lu and Peng \cite{LLP2024-AAM} presented a new way to measure the increment on the spectral radius.    
Let $T_{n,2,q}$ be the graph obtained from
$T_{n,2}$ by embedding a star $K_{1,q}$ into the larger vertex part; see Fig. \ref{fig-thm2-3}.  
Under this notation, they define the spectral increment 
as $\delta (q) = \lambda (T_{n,2,q}) - \lambda (T_{n,2})$,  and proposed the following problem.  

\begin{problem}[Li--Lu--Peng, 2024]  
\label{conj-spec-LS}
Let $1\le q < {n}/{2}$ and $G$ be an $n$-vertex graph with
\[  \lambda (G)\ge \lambda (T_{n,2,q}) . \]
Then $G$ has at least  $ q \lfloor {n}/{2}\rfloor $ triangles, 
with equality if and only if $G=T_{n,2,q}$.
\end{problem}

Problem \ref{conj-spec-LS} could be regarded as 
a spectral version of the Lov\'{a}sz--Simonovits theorem. 
Especially,  
Li, Lu and Peng \cite{LLP2024-AAM}  
confirmed this problem in the initial case of $q=1$.

\medskip 
 In the present paper, 
 our goal is to confirm Problem \ref{conj-spec-LS} for any given $q\in \mathbb{N}$.  
Let $t(G)$ be the number of triangles in $G$. 
We show that 
$t(G)\ge q\lfloor {n}/{2}\rfloor$ in Problem \ref{conj-spec-LS} 
can be guaranteed by a slightly {\it weaker} condition. 
Let $Y_{n,2,q}$ be the graph obtained from 
$T_{n,2}$ by embedding $q$ pairwise disjoint edges 
into the partite set with larger size; see Fig. \ref{fig-thm2-3}.  
It is not difficult to verify that 
$\lambda (Y_{n,2,q}) < \lambda (T_{n,2,q})$ for every $q\ge 2$; see Proposition \ref{prop-Y-less-T}. 

  \begin{figure}[H]
\centering
\includegraphics[scale=0.9]{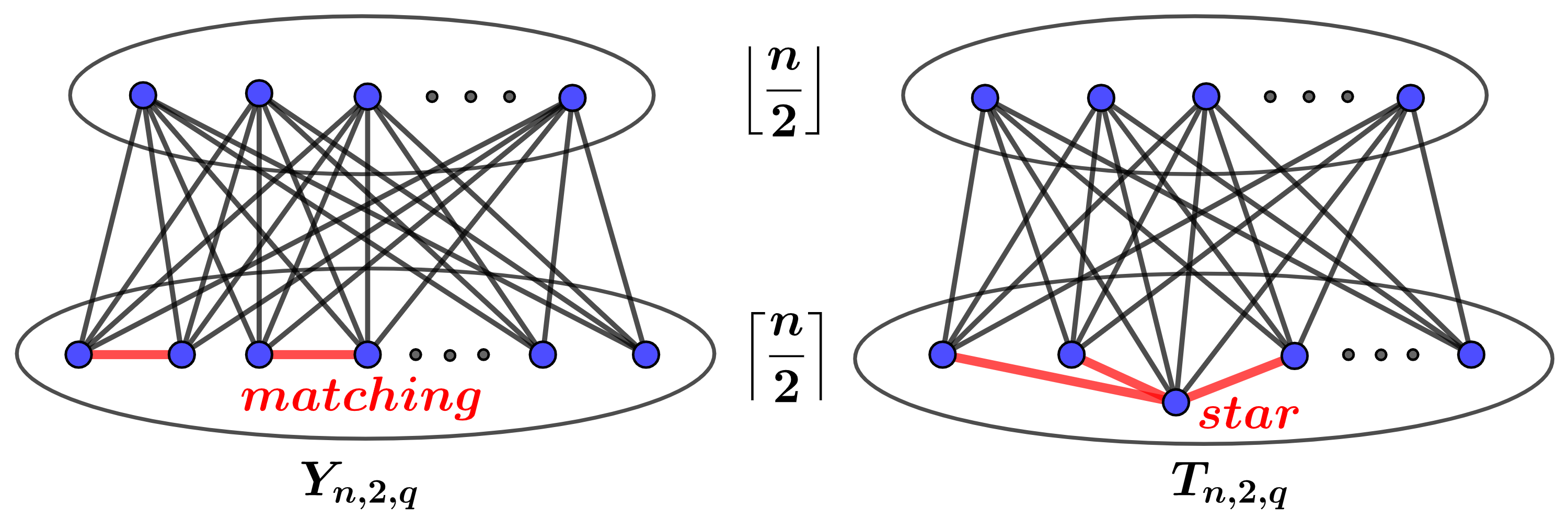} 
\caption{The graphs $Y_{n,2,q}$ and $T_{n,2,q}$.}
\label{fig-thm2-3}
\end{figure}

In what follows,  we give a stronger result than Problem \ref{conj-spec-LS}. 

\begin{theorem} \label{thm-Yn2q}
If $1\le q \le \frac{1}{11}\sqrt{n}$ and $G$ is an $n$-vertex graph with
\[  \lambda (G)\ge \lambda (Y_{n,2,q}), \]  
then there are at least  $ q \lfloor {n}/{2}\rfloor $ triangles in $G$.  
\end{theorem}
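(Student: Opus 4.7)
The plan is to reduce the spectral statement to the combinatorial Lov\'asz--Simonovits theorem (Theorem \ref{thm-LS1975}) via a stability-type argument: the hypothesis $\lambda(G) \ge \lambda(Y_{n,2,q})$ either forces $e(G) \ge \lfloor n^2/4\rfloor + q$, in which case Theorem \ref{thm-LS1975} finishes the proof, or it forces $G$ to have a very restricted near-bipartite structure from which the $q\lfloor n/2\rfloor$ triangles can be read off directly.

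The first step is to establish a sharp lower estimate on $\lambda(Y_{n,2,q})$. By taking the equitable partition of $Y_{n,2,q}$ whose cells are the $2q$ endpoints of the embedded matching, the remaining vertices of the larger side, and the smaller side, one obtains a $3\times 3$ quotient matrix. Solving the associated characteristic polynomial to leading order in $1/n$ yields an expansion of the form $\lambda(Y_{n,2,q})^2 \ge \lfloor n^2/4\rfloor + q - O(q^2/n)$, and I would retain enough precision that the subsequent estimates survive once $n \ge 300q^2$.

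The second step is to translate the spectral hypothesis into structural information on $G$. First, a Nosal-type inequality $\lambda^2(G) \le e(G)$ (valid for triangle-free $G$) combined with Step 1 already forces $G$ to contain a triangle, and a quantitative refinement forces $e(G)$ to lie close to $\lfloor n^2/4\rfloor + q$. If in fact $e(G) \ge \lfloor n^2/4\rfloor + q$, then Theorem \ref{thm-LS1975} finishes the proof, so one may assume $e(G) \le \lfloor n^2/4\rfloor + q - 1$. Using the Perron eigenvector $x$ normalized by $\lVert x\rVert_\infty = 1$, together with a Nikiforov-type spectral stability theorem (spectral radius close to $\lambda(T_{n,2})$ implies $G$ is almost complete bipartite), I would extract a bipartition $V(G) = A \cup B$ with $\big||A|-|B|\big| \le 1$ such that the number of edges inside $A\cup B$ (call these \emph{internal} edges) and the number of missing crossing edges are both small compared to $n$.

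With this bipartition in hand, the triangle count reduces to a codegree estimate: each internal edge $uv$ produces $|N(u)\cap N(v)\cap (A\cup B)\setminus\{u,v\}|$ triangles, and the near-completeness of the bipartite part gives that this codegree is at least $\lfloor n/2\rfloor - O(q)$ for almost every internal edge. Summing over the at least $q$ internal edges (the deficit case, in which fewer than $q$ internal edges exist, is handled by an edge-switching/Kelmans-type comparison that contradicts the lower bound from Step 1) and absorbing the $O(q)$ correction using $n \ge 300q^2$ yields the desired $q\lfloor n/2\rfloor$ triangles. The main obstacle I anticipate is obtaining a \emph{quantitative} spectral stability strong enough to simultaneously control the number of internal edges, the number of missing crossings, and the codegree of each internal edge; the hypothesis $n \ge 300q^2$ appears to be exactly the threshold at which all three error terms can be absorbed into the $q$-gain coming from Step 1, and squeezing the constants down to an explicit value such as $300$ will require careful bookkeeping throughout the eigenvector analysis.
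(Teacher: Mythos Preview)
Your overall plan---reduce to Lov\'asz--Simonovits when $e(G)\ge\lfloor n^2/4\rfloor+q$, and otherwise exploit a near-bipartite structure together with an eigenvector/Rayleigh comparison---is exactly the route the paper takes. Two points need correction, however.

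First, your case split in the residual argument has a gap. With \emph{exactly} $q$ internal edges and at least one missing cross edge (which is possible when $e(G)\le\lfloor n^2/4\rfloor+q-1$), the codegree count only gives $q\lfloor n/2\rfloor - O(q)$ triangles, and no hypothesis on $n$ absorbs that deficit; the absorption you describe works only when there are at least $q+1$ internal edges, since the extra edge contributes an additional $\sim n/2\ge 150q^2$ triangles. The paper handles this by arguing by contradiction from the outset: assuming $t(G)<q\lfloor n/2\rfloor$, the counting argument (with threshold $q+1$, your case (a)) forces $e(S)+e(T)\le q$; then one shows that \emph{whenever} $e(G)\le\lfloor n^2/4\rfloor+q-1$ one has $\lambda(G)<\lambda(Y_{n,2,q})$, via a Rayleigh-quotient comparison evaluating the Perron vector of $G$ on $Y_{n,2,q}$. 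That comparison needs every eigenvector entry to be $1-O(q/n)$, which in turn requires a minimum-degree bound $\delta(G)\ge n/2-O(q)$ derived from the contradiction hypothesis. This is the chain your ``Kelmans-type comparison'' is gesturing at, but it has to cover the case $e(S)+e(T)\le q$, not only $e(S)+e(T)<q$.

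Second, the bipartition in the paper is \emph{not} obtained from a Nikiforov-type spectral stability theorem but from the purely combinatorial supersaturation-stability lemma (Theorem~\ref{thm-far-bipartite}): if $G$ is $\varepsilon$-far from bipartite then $t(G)\ge \tfrac{n}{6}\bigl(m+\varepsilon-\tfrac{n^2}{4}\bigr)$. Combined with the spectral triangle inequality $m\ge\lambda^2-3t/\lambda$ (Lemma~\ref{thm-BN-CFTZ-NZ}) and the crude bound $\lambda^2(Y_{n,2,q})\ge\lambda^2(Y_{n,2,1})>\lfloor n^2/4\rfloor+2$, this shows at once that $G$ is not $6q$-far from bipartite, with fully explicit constants and no appeal to the triangle removal lemma. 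This is precisely what makes the explicit threshold $n\ge 300q^2$ attainable; a black-box spectral stability theorem would typically give only $n$ sufficiently large. Incidentally, your Step~1 is more precise than needed: the paper never computes $\lambda(Y_{n,2,q})$ beyond the two trivial bounds just mentioned and $\lambda(Y_{n,2,q})\ge 2e(Y_{n,2,q})/n$.
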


\noindent 
{\bf Remark.} 
The above bound $q\le \frac{1}{11}\sqrt{n}$ is {\it tight} up to a constant factor; see Example \ref{exam}. This reveals a {\it different} phenomenon from that of the traditional edge supersaturation. 
Moreover, the spectral condition in Theorem \ref{thm-Yn2q} is weaker than the size condition in Theorem \ref{thm-LS1975}, since $e(G)> e(Y_{n,2,q})$ implies $\lambda (G)> \lambda (Y_{n,2,q})$. 
Indeed, suppose that $\lambda (G)\le \lambda (Y_{n,2,q})$. Then, invoking the fact that $\lambda (G)\ge \frac{2e(G)}{n}$, we obtain $e(G)\le \lfloor \frac{n}{2} \lambda (G)\rfloor 
\le \lfloor \frac{n}{2} \lambda (Y_{n,2,q})\rfloor = e(Y_{n,2,q})$, 
where the last equality follows from $\lambda (Y_{n,2,q}) < \frac{n}{2} + \frac{2q}{n} + \frac{4q}{n^2}$ (see Lemma \ref{lem-LLP}).  
 In conclusion, the spectral condition is indeed weaker than the size condition. 
In addition, there are many graphs $G$ satisfying $\lambda (G)> \lambda (Y_{n,2,q})$ while $e(G)< e(Y_{n,2,q})$. 
From this perspective, the spectral condition of Theorem \ref{thm-Yn2q} has {\it wider} applicability.

The Bollob\'{a}s--Nikiforov bound  \cite{BN2007jctb} 
states that $t(G)\ge \frac{n^2}{12} \left( \lambda (G)- \frac{n}{2} \right)$.  This provides a lower bound for $t(G)$ under the assumption of Theorem \ref{thm-Yn2q}.  
For even $n$, we have $\lambda(G) \ge \lambda (Y_{n,2,q}) > \frac{n}{2} + \frac{2q}{n}$, and then $t(G) > \frac{q}{6}n$; For odd $n$, we obtain $\lambda(G) > \frac{n}{2}  - \frac{1}{2n}+ \frac{2q}{n}$, which implies $t(G) > \frac{4q-1}{24}n$. Since $\lambda(Y_{n,2,q}) < \frac{n}{2} + \frac{2q}{n} + \frac{4q}{n^2}$ for all $n\ge 4q$ (see Lemma \ref{lem-LLP}), the lower bound on $t(G)$ provided by Bollob\'{a}s--Nikiforov's bound does not exceed $\frac{q}{6}n + \frac{q}{3}$. In contrast, Theorem \ref{thm-Yn2q} offers a {\it stronger} bound $t(G) \geq q\lfloor \frac{n}{2} \rfloor$.

\medskip 
Recall that $\lambda (T_{n,2,q}) > \lambda (Y_{n,2,q})$. 
Applying Theorem \ref{thm-Yn2q}, 
we know that the desired bound 
in Problem \ref{conj-spec-LS} holds 
immediately. 
Using the double-eigenvector technique, we shall confirm Problem  \ref{conj-spec-LS} in the range $q\le \frac{1}{11}\sqrt{n}$, 
showing that $T_{n,2,q}$ is the unique graph that 
attains the maximum spectral radius among all $n$-vertex 
graphs 
with at most $q\lfloor {n}/{2}\rfloor$ triangles. 

\begin{theorem} \label{thm-sp-LS}
If  $1\le q \le \frac{1}{11}\sqrt{n}$ and $G$ is an $n$-vertex graph with
\[  \lambda (G)\ge \lambda (T_{n,2,q}), \]  
then $G$ has at least  $ q \lfloor {n}/{2}\rfloor $ triangles, 
with equality if and only if $G=T_{n,2,q}$. 
\end{theorem}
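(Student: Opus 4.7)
The plan is to split the statement into two parts. The triangle count $t(G)\ge q\lfloor n/2\rfloor$ comes for free from Theorem \ref{thm-Yn2q}: since a direct eigenvalue comparison shows $\lambda(T_{n,2,q})>\lambda(Y_{n,2,q})$ (the star is a Kelmans-type transformation of the matching in the larger part), any $G$ with $\lambda(G)\ge \lambda(T_{n,2,q})$ also satisfies $\lambda(G)\ge \lambda(Y_{n,2,q})$, which suffices. The real content is the equality characterization, which I would reformulate as the following spectral extremal problem: among all $n$-vertex graphs with at most $q\lfloor n/2\rfloor$ triangles, $T_{n,2,q}$ uniquely maximizes $\lambda$.

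Let $G^*$ be such an extremal graph, so $\lambda(G^*)\ge \lambda(T_{n,2,q})$ and $t(G^*)\le q\lfloor n/2\rfloor$. The first step is to upgrade the arguments behind Theorem \ref{thm-Yn2q} to a \emph{spectral stability} statement: any $G^*$ with so few triangles and so large a spectral radius must be very close to $T_{n,2}$. Concretely, one should extract a bipartition $V(G^*)=A\cup B$ with $|A|=\lceil n/2\rceil$, $|B|=\lfloor n/2\rfloor$ so that nearly all pairs in $A\times B$ are edges, only $O(q)$ edges lie inside $A\cup B$, and no vertex is far from having full degree into the opposite part. This stability is the technical engine; for $n\ge 300q^2$ the triangle count is sharp enough that the Lovász--Simonovits counting argument, combined with $\lambda(G^*)\ge n/2$, forces this structure.

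Next I would invoke the \emph{double eigenvector technique}, which the authors explicitly point to. Let $\mathbf{x}$ be the Perron eigenvector of $G^*$, and pick $u^*\in A$, $v^*\in B$ maximizing the entries of $\mathbf{x}$ within their respective parts. For any vertex $w$, compare $\sum_{y\sim w}x_y$ with $\sum_{y\sim u^*}x_y$ and $\sum_{y\sim v^*}x_y$; if $w$ has small eigenvector entry and misses a neighbor of $u^*$ or $v^*$ in the opposite part, one can perform an edge rotation (delete a neighbor of $w$, add the missing neighbor of $u^*$ or $v^*$) that strictly increases $\lambda$ while, by the stability structure, not creating extra triangles beyond the budget $q\lfloor n/2\rfloor$. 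Iterating this in the spirit of the proof in \cite{LLP2024-AAM} for $q=1$ cleans $G^*$ so that the bipartite part is exactly $K_{A,B}$, the vertex set $B$ is independent, and all ``bad'' edges are concentrated inside $A$.

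Therefore $G^*$ is obtained from $T_{n,2}$ by embedding a subgraph $H$ with $q'\le q$ edges into $A$; Theorem \ref{thm-Yn2q} forces $q'=q$ and $t(H\to G^*)=q\lfloor n/2\rfloor$, which in turn means $H$ has no triangle and no cherry that would double-count triangles against the budget in a way violating the triangle count. The main obstacle is then the final step: showing that among all such edge-sets $H$, the star $K_{1,q}$ is the unique $\lambda$-maximizer. I would handle this by a Kelmans-type shift together with a quotient-matrix analysis. Let $u\in A$ and let $e=\{a,b\}\subseteq A\setminus\{u\}$ be an edge of $H$ not incident to $u$; replacing $e$ by $\{u,a\}$ (when $\{u,a\}\notin H$) strictly increases $\lambda$ by the standard Perron-weight comparison, provided $x_u$ is the largest entry in $A$, which follows from symmetry and the eigenvalue equation. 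Iterating concentrates all $q$ edges at a single vertex, giving $G^*=T_{n,2,q}$. The delicate point to watch is that every intermediate shift must preserve the constraint $t(G^*)\le q\lfloor n/2\rfloor$, which it does because the number of triangles on $A$-edges is determined solely by $|B|$ and the number of $A$-edges, independent of their location.
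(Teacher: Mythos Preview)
Your overall architecture matches the paper's: deduce the inequality from Theorem~\ref{thm-Yn2q} via $\lambda(T_{n,2,q})>\lambda(Y_{n,2,q})$, then characterize equality by taking $G^*$ of maximum spectral radius among $n$-vertex graphs with $t(G^*)\le q\lfloor n/2\rfloor$. The structural reduction to ``$T_{n,2}$ plus $q$ bad edges in the parts'' is also right in spirit, although the paper obtains it by pure counting (Lemmas~\ref{lem-ST-q}, \ref{balanced}, \ref{lessYn2q} force $e(S)+e(T)=q$ and $e(S,T)=\lfloor n^2/4\rfloor$), not by rotations.

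There is, however, a genuine gap in your final step. Your claim that ``the number of triangles on $A$-edges is determined solely by $|B|$ and the number of $A$-edges, independent of their location'' is false: it ignores triangles lying entirely inside $A$. A Kelmans shift can create such a triangle. For a concrete failure, take $H$ with edges $uw_1,uw_2,uw_3,w_1a,ab$ (so $u$ has the largest entry). The edge $\{a,b\}$ is not incident to $u$ and $\{u,a\}\notin H$, but rotating $\{a,b\}\to\{u,a\}$ produces the triangle $\{u,w_1,a\}$, pushing $t$ above $q\lfloor n/2\rfloor$. The $4$-cycle is another obstruction: in $C_4$ all Perron entries coincide, and the only available rotation again creates a triangle. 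This is exactly why the paper does \emph{not} iterate single-vertex shifts. Instead (Lemma~\ref{lem-star}) it removes the entire non-star part $H_1'$ at once and re-attaches the same number of edges from $v^*$ to \emph{fresh} vertices outside $V(H_1)$; showing that this bulk move increases $\lambda$ is precisely where the double-eigenvector identity $\mathbf{x}^\top\mathbf{y}(\lambda'-\lambda)=\mathbf{x}^\top(A'-A)\mathbf{y}$ is used, together with the degree-square bound $\sum d^2\le m^2+m$. Your description of the ``double eigenvector technique'' is really the single-eigenvector Kelmans lemma, which is too weak here.

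Two further points you would need to address. First, even after fixing the above you still have to rule out $H\cong C_4$; the paper does this by a direct characteristic-polynomial comparison. Second, for even $n$ the bad edges may sit in \emph{both} parts, and nothing in your outline excludes this; the paper spends a separate lemma (Lemma~5.5, again via the double-eigenvector technique combined with estimates on $y_S,y_T$) to show that concentrating all $q$ edges as a star in one part strictly beats any split $K_{1,q_1}\cup K_{1,q_2}$ or the $C_4$ variants.
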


\noindent 
{\bf Remark.}  When embedding  
a graph $H$ with $q$ edges into one partite set 
of $T_{n,2}$, we can verify that 
the spectral radius of the resulting graph 
decreases according to the following order of $H$: 
star, complete graph, complete bipartite graph (that is not a star), cycle, path, and matching. 
Moreover, it can be verified that 
if we embed two matchings of sizes $q_1$ and $q_2$ 
(with $q_1+q_2=q$) into two partite sets, respectively, 
then the resulting graph has larger spectral radius than 
 $Y_{n,2,q}$. In contrast, 
 if we embed two stars of sizes $q_1$ and 
$q_2$ into two partite sets, respectively, then 
the resulting graph has smaller spectral radius  than  
$T_{n,2,q}$.

\subsection{A spectral extension of a stability result}

 The {\it triangle covering number}  $\tau_3(G)$ is the minimum number of vertices in a set that contains at least one vertex of each triangle in $G$. 
In particular,  $\tau_3(G)=1$ if and only if all triangles of $G$ 
share a common vertex. 
Xiao and Katona \cite{XK2021} gave a stability of Theorem \ref{thmrad} by showing  that if $e(G)> \lfloor n^2/4\rfloor$ and $\tau_3(G)\ge 2$, then $G$ has at least 
$n-2$ triangles. 
Moreover, they proposed a problem to study the stability of  Theorem \ref{thm-LS1975}. 
Recently, this problem was solved by Liu and Mubayi 
\cite{LM2022-Erd-Rad}, as well as Balogh and Clemen 
 \cite{BC2023} independently.

 \medskip 
 Before stating their result, we need to define two extremal graphs. 
 Firstly, 
let $G_1$ be the graph of order $n$, where $V(G_1)=A\cup B$ with $|A| = \lceil {n}/{2}\rceil +a$ and  
$|B| = \lfloor {n}/{2} \rfloor -a$, where $a\in \mathbb{N}$ is chosen later.  
Pick $2(s-1)$ vertices $x_1,y_1,\ldots ,x_{s-1},y_{s-1}$ in $A$ 
and two vertices $u_1,u_2$ in $B$. Adding $s$ edges 
$\{u_1,u_2\}$ and $\{x_i,y_i\}$, for $i\in \{1,\ldots ,s-1\}$  
into $K_{|A|,|B|}$, and  delete  $\alpha$ edges 
$\{u_1,x_1\}, \ldots ,\{u_1,x_{\alpha}\}$, 
where $\alpha := s-t- a^2 - \mathbf{1}_{\{2\nmid n\}} a$.   
Then $e(G_1)=\lfloor n^2/4\rfloor +t$ and $\tau_3(G_1)=s$. 
Secondly, 
let $G_2$ be the graph with the same vertex partition. 
Unlike the previous construction, we now 
pick $2s$ vertices 
$x_1,y_1,\ldots ,x_s,y_s\in A$ and one vertex $u\in B$, 
add the $s$ edges $\{x_i,y_i\}$, for $i\in \{1,\ldots ,s\}$ into $A$, 
and delete the $\alpha$ edges 
$\{u,x_1\},\ldots ,\{u,x_{\alpha}\}$ between $A$ and $B$. 
Similarly, we have $e(G_2)=\lfloor n^2/4\rfloor +t$ and $\tau_3(G_2)=s$. 
It is easy to see that 
$t(G_1)=(s-1)|B| + |A| -2 \alpha$ and 
$t(G_2)=s|B| - \alpha$. 
Now, we choose an appropriate $a\in \mathbb{N}$ 
to minimize the number of triangles in the above constructions.

\begin{theorem}[Balogh--Clemen, 2023]  \label{thm-BC}
Let $t,s\in \mathbb{N}$ such that 
 $0<t<s$. 
Suppose that 
$G$ is an $n$-vertex graph with $\lfloor n^2/4\rfloor +t$ edges.  If  $\tau_3(G) \ge s$ and $n$ is sufficiently large, then $G$ contains at least $\min\{t(G_1),t(G_2)\}$ triangles.
\end{theorem}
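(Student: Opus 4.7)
\medskip
\noindent\textbf{Proof proposal.}
The plan is to combine a stability-type structural result with a careful discrete optimization over ``near-bipartite'' configurations that respect the triangle-covering constraint. First I would reduce to the near-bipartite case: since $G$ has $\lfloor n^2/4\rfloor+t$ edges with $t$ fixed and $n$ large, a stability version of Mantel's theorem (either via the triangle removal lemma, or via Erd\H{o}s's classical stability argument) produces a bipartition $V(G)=A\sqcup B$ with $|A|=\lceil n/2\rceil+a$ and $|B|=\lfloor n/2\rfloor-a$ for some integer $a$ of bounded absolute value, and such that $e(G[A])+e(G[B])$ is not much larger than $t$. Moreover, if $G$ already contains more than $\min\{t(G_1),t(G_2)\}=\Theta(sn)$ triangles we are done, so we may assume $t(G)=O(sn)$; this bound in turn forces almost every vertex of $A$ to be adjacent to almost all of $B$ (and symmetrically).

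Next I would perform a symmetrization/cleaning step. Writing $R:=e(G[A])+e(G[B])$ for the number of non-bipartite edges and $M$ for the number of missing $A$--$B$ edges, the edge-count identity reads
\[
R-M \;=\; t + a^2 + \mathbf{1}_{\{2\nmid n\}}\, a .
\]
I would argue that the extremal $G$ can be taken so that the non-bipartite edges form a matching and the missing $A$--$B$ edges are incident to at most one or two distinguished vertices, mimicking the constructions $G_1$ and $G_2$. The rationale is that each non-bipartite edge inside $A$ contributes roughly $|B|$ triangles and each inside $B$ contributes roughly $|A|$; edges sharing an endpoint (for instance, a cherry or a triangle inside $A$) create additional triangles without helping the covering number, whereas spreading the non-bipartite edges out into a matching keeps $\tau_3(G)$ large while minimizing the triangle count.

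The triangle-covering hypothesis enters at this point: after the cleaning step, covering all triangles of $G$ essentially reduces to hitting every non-bipartite edge, so $\tau_3(G)\ge s$ forces the matching of non-bipartite edges to have size at least $s$. Combined with the identity above, the deficit $\alpha=s-t-a^2-\mathbf{1}_{\{2\nmid n\}}a$ must be made up by $\alpha$ missing $A$--$B$ edges, and splitting into the two cases of whether all the non-bipartite edges lie in $A$, or one of them lies in $B$, produces exactly the graphs $G_2$ and $G_1$ respectively. A final optimization over the integer parameter $a$ subject to $\alpha\ge 0$ and $a^2\le s-t$ yields $\min\{t(G_1),t(G_2)\}$.

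The main obstacle will be the symmetrization step. Replacing an arbitrary configuration of non-bipartite edges by a matching, and an arbitrary pattern of missing $A$--$B$ edges by the star pattern of $G_1$ or $G_2$, is intuitive but delicate to execute: one has to track the defect parameters $R$ and $M$ simultaneously, maintain the constraint $\tau_3(G)\ge s$ throughout all local exchanges, and verify that each swap can only decrease $t(G)$. A na\"{\i}ve swap may decrease $\tau_3$ below $s$ (for instance, collapsing two matching edges into a cherry), so the exchange arguments need to be carried out with care, probably via a potential function that balances the matching number of the non-bipartite graph against the triangle count.
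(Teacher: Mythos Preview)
This theorem is not proved in the present paper. It is quoted from Balogh and Clemen~\cite{BC2023} (and independently Liu and Mubayi~\cite{LM2022-Erd-Rad}) as a background result, and the paper supplies no argument for it; the paper's own contribution in this direction is the spectral analogue, Theorem~\ref{spectral-BC}, whose proof in Section~\ref{sec6} only establishes the asymptotic bound $t(G)\ge sn/2 - 5s^2$ and does not attempt the exact minimum $\min\{t(G_1),t(G_2)\}$. So there is no ``paper's own proof'' of Theorem~\ref{thm-BC} to compare your proposal against.

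That said, your outline is a reasonable sketch of how results of this type are obtained: a stability step to reduce to an almost-complete bipartite structure with a bounded number of class-edges and missing cross-edges, followed by a local exchange/symmetrization to push the class-edges into a matching and concentrate the missing cross-edges on one or two vertices, and finally a discrete optimization over the imbalance parameter $a$. You correctly flag the delicate point: preserving the hypothesis $\tau_3(G)\ge s$ under local swaps is where most of the work lies, and one also has to handle the interaction between missing cross-edges and class-edges carefully (a missing cross-edge incident to both endpoints of a class-edge kills that edge's contribution to $\tau_3$, which is exactly why the extremal constructions $G_1,G_2$ route the deleted edges through a single vertex $u$ or $u_1$). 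If you want to verify your strategy against an actual proof, you will need to consult \cite{BC2023} or \cite{LM2022-Erd-Rad} directly.
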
 

An independent work was also studied by Liu and Mubayi \cite[Theorem 1.4]{LM2022-Erd-Rad}. 
In other words, there are two ways to embed 
$s$ disjoint edges in the partition 
in order to minimize the number of triangles, that is, 
embedding $s-1$ or $s$ edges in the larger vertex part, and other edges in the smaller part.   
Recently, Li, Feng and Peng \cite{LFP-count-bowtie} 
presented a spectral version of the result of Xiao and Katona \cite{XK2021} 
by showing that if $G$ is an $n$-vertex graph with 
$\lambda (G)\ge \lambda (T_{n,2})$ and $\tau_3(G)\ge 2$, 
then $G$ contains at least $n-3$ triangles. 
This bound is the best possible and provides a spectral stability of the Ning--Zhai result in Theorem \ref{thmNZ2021}. 

\medskip 
Next, we shall consider the general case  $\tau_3(G)\ge s$ for every $s\in \mathbb{N}$. 
The third main result in this paper reveals an asymptotically spectral extension of Theorem \ref{thm-BC}.  

\begin{theorem} \label{spectral-BC}
If $G$ is a graph with $n\ge 28s^2$ vertices with
 $\lambda (G)\ge \lambda (T_{n,2})$ and $\tau_3(G)\ge s$, 
 then there are at least $\frac{1}{2} sn - 5s^2$ triangles in $G$. 
\end{theorem}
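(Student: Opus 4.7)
My plan is to combine a quantitative spectral stability argument with a direct triangle count, using the covering hypothesis $\tau_3(G)\ge s$ to force enough internal edges in a convenient bipartition of $V(G)$.

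First, I would extract structural information from the spectral condition $\lambda(G)\ge \lambda(T_{n,2})$. Working with the Perron eigenvector $x$ of $G$ and the eigenvalue equation $\lambda(G)\, x_v=\sum_{u\sim v} x_u$, I would produce a bipartition $V(G)=A\cup B$ with $|A|=\lceil n/2\rceil$ and $|B|=\lfloor n/2\rfloor$ for which $G$ is close to the complete bipartite graph between $A$ and $B$. Quantitatively, letting $k:=e(G[A])+e(G[B])$ denote the number of internal edges, $\ell:=|A||B|-e_G(A,B)$ the number of missing bipartite edges, and $\delta(v)$ the number of non-neighbors of $v$ in the opposite part, I would show, in the spirit of \cite{NZ2021,LFP-count-bowtie}, that $k\gtrsim \ell-O(s)$ and that $\max_v \delta(v)=O(s)$ whenever $n\ge 113 s^2$.

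Next, I would convert the covering condition into a lower bound on internal edges and then count triangles. The key combinatorial observation is that $K_3$ is not bipartite, so every triangle of $G$ uses at least one internal edge. Therefore, any vertex cover of the internal-edge subgraph $(V,E_{\mathrm{int}})$ is automatically a triangle transversal, yielding
\[ \tau_3(G)\ \le\ \mathrm{VC}(V, E_{\mathrm{int}})\ \le\ |E_{\mathrm{int}}|\ =\ k, \]
so the hypothesis $\tau_3(G)\ge s$ forces $k\ge s$. Moreover, every triangle of $G$ uses either one or three internal edges (two is impossible, since two internal edges sharing a vertex would force the third vertex to lie in the same part). For each internal edge $e=\{u,v\}$ with $\{u,v\}\subseteq A$, the triangles through $e$ with apex in $B$ are counted by $|N(u)\cap N(v)\cap B|\ge \lfloor n/2\rfloor-\delta(u)-\delta(v)$; by the class distinction above, distinct internal edges yield disjoint families of such single-internal-edge triangles. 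Summing,
\[ t(G)\ \ge\ \sum_{\{u,v\}\in E_{\mathrm{int}}}\!\bigl(\lfloor n/2\rfloor-\delta(u)-\delta(v)\bigr)\ \ge\ k\lfloor n/2\rfloor-2k\max_v \delta(v)\ \ge\ \tfrac{sn}{2}-5s^2, \]
which gives the conclusion after using $k\ge s$, the pointwise defect bound from the first step, $n\ge 113 s^2$, and a short optimization in $k$ for the case $k>s$.

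The main obstacle I expect is the quantitative control in the first step. While qualitative stability (that $G$ is close to $T_{n,2}$) is standard, the explicit pointwise bound $\max_v\delta(v)=O(s)$ with a constant small enough to keep the total error below $5s^2$ is delicate, because the spectral hypothesis is at equality for $T_{n,2}$ itself. A case analysis on $k$ should resolve this: when $k$ is close to $s$ the structure is essentially the extremal matching configuration $Y_{n,2,s}$ and the Perron equation gives tight pointwise constraints; when $k$ is much larger the main term $k\lfloor n/2\rfloor$ absorbs the error, but the bipartition must still be justified. Carefully tracking constants to meet the stated thresholds $n\ge 113 s^2$ and error $5 s^2$ will be the most technical aspect.
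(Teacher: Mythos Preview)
Your combinatorial core in the second step---observing that every triangle contains an internal edge, so $\tau_3(G)\ge s$ forces at least $s$ internal edges in any bipartition, and then counting triangles through each such edge---is exactly the paper's argument. The difference lies entirely in how the approximate bipartition is obtained.

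The paper's route is far simpler than direct eigenvector analysis. It first assumes $t(G)\le s(n-1)/2$ (otherwise done), applies the inequality $e(G)>\lambda^2-3t/\lambda$ of Lemma~\ref{thm-BN-CFTZ-NZ} to get $e(G)>\lfloor n^2/4\rfloor-3s$, and then invokes the supersaturation-stability Theorem~\ref{thm-far-bipartite}: if $G$ were $6s$-far from bipartite it would already have more than $sn/2$ triangles. This immediately yields a partition $S\cup T$ with $e(S)+e(T)<6s$, fewer than $9s$ missing cross-edges, and part sizes $n/2\pm O(\sqrt{s})$. A second pass (the case $e(S)+e(T)\ge s+1$ already gives more than $sn/2$ triangles) pins $e(S)+e(T)=s$ exactly, hence fewer than $4s$ missing cross-edges. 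No pointwise defect bound is ever used: each missing cross-edge kills at most $e(S)+e(T)=s$ potential triangles, so the error is at most $4s^2$ plus $2s\sqrt{s}\le s^2$ from the part sizes. Your proposed first step---fixing a balanced partition from the Perron vector and establishing $\max_v\delta(v)=O(s)$---is the obstacle you correctly flag, and it is both harder and unnecessary: without first assuming $t(G)$ small the spectral hypothesis alone does not force near-$T_{n,2}$ structure, and even with that assumption the pointwise control you would need (roughly $\max_v\delta(v)\le \tfrac{5}{2}s$) is stronger than what the problem requires. Replacing your first step with Lemma~\ref{thm-BN-CFTZ-NZ} plus Theorem~\ref{thm-far-bipartite} makes the rest of your outline go through cleanly.
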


\noindent 
{\bf Remark.}  
Theorem \ref{spectral-BC} gives the stability of Theorem \ref{thm-sp-LS}. 
Applying the fact $\lambda (G)\ge \frac{2e(G)}{n}$, we see that 
$e(G)> e(T_{n,2})$ implies $\lambda (G) > \lambda (T_{n,2})$. 
As a by-product of Theorem \ref{spectral-BC}, 
we find that if $e(G)> e(T_{n,2})$ and $\tau_3(G)\ge s$, 
then $G$ contains $\frac{1}{2}sn - O(1)$ triangles. 
This result could be viewed as an asymptotic version of 
Theorem \ref{thm-BC}, since $t(G_1) \approx t(G_2) \approx \frac{1}{2}sn - O(1)$. 
With additional efforts,  the constant term of Theorem \ref{spectral-BC} can be slightly  improved, 
but it seems difficult to establish the exact bound and  
determine the spectral extremal graphs corresponding to the extremal graphs $G_1$ and $G_2$ in Theorem \ref{thm-BC}.

\subsection{Our approach and organization}

{\bf Our approach.} 
The main idea in our proofs of Theorems \ref{thm-Yn2q}, \ref{thm-sp-LS} and \ref{spectral-BC} 
uses a simple yet surprisingly powerful technique known as 
the supersaturation-stability method.  
Informally, every graph with few triangles can be made bipartite by deleting a relatively small number of edges.   
Here, ``relatively small'' means a constant number of edges independent of $n$; see Theorem \ref{thm-tri-effi}.
This is different from the classical stability method, 
which typically requires the removal of roughly $\varepsilon n^2$ edges for some small $\varepsilon >0$. 
Although this description may sound somewhat vague, we will make it more precise in Section \ref{sec4}. 
Unlike the traditional stability method, 
our method requires only the condition $n \ge 121q^2$, instead of requiring that $q$ is fixed and $n$ is sufficiently large (or a tower-type function in $q$). 
This is because our proof avoids the use of the celebrated triangle removal lemma\footnote{The lemma says that for each $\varepsilon >0$,  there is $\delta >0$ such that every graph on $n$ vertices with at most $\delta n^3$ triangles can be made triangle-free by removing at most $\varepsilon n^2$ edges.  That is, a graph with a subcubic number of triangles can be made triangle-free by removing a subquadratic number of edges; see \cite{CF2013}.}. 
Consequently, $q$ is allowed to vary over a wide range and may even grow with $n$. 
Hence, we significantly improve the lower bound 
on the order $n$ of a graph with respect to $q$. 
We believe that the supersaturation-stability method used in this paper may be of independent interest and could potentially find further applications elsewhere, much like the classical stability approach, which has now seen widespread use. 
Last but not least, in addition to the supersaturation-stability method, we also employ the spectral technique developed by Cioab\u{a}, Feng, Tait and Zhang \cite{CFTZ20},  
 as well as the double-eigenvector technique 
 applied by Fang, Tait and Zhai \cite{FTZ2024}.

\medskip
\noindent 
{\bf Organization.}   
 In Section \ref{sec3}, we show the tightness of $q$ in Theorem \ref{thm-Yn2q} and introduce preliminaries for counting  triangles. 
As a warm up, we give a spectral stability result  
 for graphs with few triangles; 
 see Theorem \ref{thm-tri-effi}. 
 In Section \ref{sec4}, 
we outline the main ideas of our method and 
then present the proof of 
Theorem \ref{thm-Yn2q}. 
In Section \ref{sec5}, we employ the double-eigenvector technique to prove Theorem \ref{thm-sp-LS}. 
The proof of Theorem \ref{spectral-BC} is provided  
in Section \ref{sec6}.   
Finally, 
we propose several related spectral extremal problems  for  readers.

\medskip
\noindent 
{\bf Notation.}  
We write $G=(V,E)$ for a graph with vertex set 
$V=\{v_1,\ldots ,v_n\}$ and edge set $E=\{e_1,\ldots ,e_m\}$, where we admit $|V|=n$ and $|E|=m$. 
If $S\subseteq V$ is a subset of the vertex set, then 
$G[S]$ denotes the subgraph of $G$ induced by $S$, 
i.e., the graph on $S$ whose edges are those edges of $G$ 
with both endpoints in $S$. 
For abbreviation, we denote $e(S)=e(G[S])$.  
We write $G[S,T]$ for the induced 
subgraph of $G$
whose edges have one endpoint in $S$ and the other in $T$,  and we write $e(S,T)=e(G[S,T])$.  
We write $K_{s,t}$ 
for the complete bipartite graph with parts of sizes $s$ 
and $t$. 
Let $N(v)$ be the set of vertices adjacent to a vertex $v$ and 
let $d(v)=|N(v)|$. 
For simplicity, we denote $N_S(v)= N(v) \cap S$ 
and $d_S(v)=|N_S(v)|$. 
 We write $\lambda (G)$ for the spectral radius of $G$ 
 and $t(G)$ for the number of triangles in $G$.  
 Sometimes, we write $\lambda$ and $t$ for short. 
 The triangle covering number $\tau_3(G)$ is defined as the minimum number of vertices 
that hit all triangles of $G$. 
For a vertex set $S\subseteq V(G)$, 
we write $G\setminus S $ for the subgraph of $G$ 
by deleting all vertices of $S$ and their incident edges. 
Let $\bm{x}=(x_1,x_2,\ldots ,x_n)^{\mathrm{T}}$ be a Perron vector of $G$, 
i.e., a non-negative eigenvector corresponding to $\lambda (G)$. 
By scaling, we may further assume that $\max\{x_v: v\in V(G)\}=1$. 
For a subset $S\subseteq V(G)$, we denote $x_S=\sum_{v\in S}x_v$. 
In particular, we have $x_V= \sum_{v\in V(G)} x_v$.

\section{Preliminaries}

\label{sec3}

Recall that $Y_{n,2,q}$ is a graph obtained from the bipartite Tur\'{a}n graph  $T_{n,2}$ by embedding  $q$ pairwise disjoint edges into the partite set of size $\lceil n/2\rceil$. In what follows, we give an estimate on the spectral radius of $Y_{n,2,q}$. 

\begin{lemma} \label{lem-LLP}
(a) If $n$ is even, then $\lambda(Y_{n,2,q})$
is the largest root of
\[  f_1(x)=x^3-x^2 - (n^2x)/4 + n^2/4 - q n. \]
(b) If $n$ is odd, then $\lambda(Y_{n,2,q})$
is the largest root of
\[ f_2(x)=x^3 - x^2 - (n^2x)/4 + x/4 + n^2/4 -1/4 + q - q n. \]
Consequently, we have $ \sqrt{\left\lfloor {n^2}/{4} \right\rfloor +2q} < \lambda (Y_{n,2,q}) < \frac{n}{2} + \frac{2q}{n} + \frac{4q}{n^2}$ for every $n\ge 4q$. 
\end{lemma}

\begin{proof}
(a) Let $\bm{x}=(x_1,x_2,\ldots ,x_n)^{\mathrm{T}}$ be the Perron eigenvector of $\lambda(Y_{n,2,q})$.
Assume that $V(Y_{n,2,q} )=
X_1 \cup X_2 \cup Y$, where $X_1=\{u_1,v_1,u_2,v_2,\ldots,u_q,v_q \}$ is the set of  vertices of the matching of size $q$, $X_1\cup X_2$ and $Y$ are partite sets of $K_{\frac{n}{2}, \frac{n}{2}}$.
Since two vertices with the same neighborhood have the same value in corresponding coordinates of $\bm{x}$,  we may assume that
$x_{u_i}=x_{v_i}=a$ for $1\le i\le q$, $x_u=b$ for each $u\in X_2$, and $x_w=c$
for each $w\in Y$. Then
\[  \begin{cases}
\lambda a= a + \frac{n}{2} c, \\
\lambda b = \frac{n}{2}c, \\
\lambda c = 2qa + (\frac{n}{2}-2q)b.
\end{cases} \]
Thus, $\lambda(Y_{n,2,q})$ is the largest eigenvalue of
\[  B_1 =  \begin{bmatrix}
1 & 0 & \frac{n}{2} \\
0 & 0 & \frac{n}{2} \\
2q & \frac{n}{2}-2q & 0
\end{bmatrix}.  \]
By calculation, we know that $\lambda(Y_{n,2,q})$ is the largest  root of
\[ f_1(x) := \det (xI - B_1 ) 
=x^3 - x^2 - (n^2 x)/4 + n^2/4 - q n.  \]


(b) Let $V(Y_{n,2,q}) = X_1\cup X_2\cup Y$ be a partition, 
where $X_1=\{u_1,v_1,\ldots ,u_q,v_q\}$ spans a matching with 
$q$ edges, $X_1\cup X_2$ and $Y$
are partite sets of $K_{\frac{n+1}{2}, \frac{n-1}{2}}$
satisfying $|X_1| + |X_2|= \frac{n+1}{2}$
and $|Y|=\frac{n-1}{2}$. A similar argument yields that
$\lambda (Y_{n,2,q})$ is the largest eigenvalue of
\[  B_2= \begin{bmatrix}
1 & 0 & \frac{n-1}{2} \\
0 & 0 & \frac{n-1}{2} \\
2q & \frac{n+1}{2}- 2q & 0
\end{bmatrix}.  \]
Thus, $\lambda (Y_{n,2,q})$
is the largest root of
\[  f_2(x) := \det (xI - B_2)
=x^3 - x^2 + x/4 - (n^2 x)/4 + n^2/4 -1/4 - q n + q.  \] 
Moreover, it is easy to check that 
\[ f_1(\sqrt{n^2/4 +2q}) = 2q \sqrt{n^2/4 +2q} -qn - 2q <0  \]
and 
\[ f_2(\sqrt{(n^2-1)/4 +4}) 
=2q \sqrt{(n^2-1)/4 +2q} - qn -q <0. \]
Consequently, we get 
$ \sqrt{\lfloor n^2/4 \rfloor  +4} < 
\lambda (Y_{n,2,q}) $. Similarly, we can verify that $f_1(\frac{n}{2} + \frac{2q}{n} + \frac{4q}{n^2}) >0$ and $f_2(\frac{n}{2} + \frac{2q}{n} + \frac{4q}{n^2}) >0$, which implies that 
$\lambda (Y_{n,2,q}) < \frac{n}{2} + \frac{2q}{n} + \frac{4q}{n^2}$, as desired. 
\end{proof}

Similar to the proof of Lemma \ref{lem-LLP}, 
we prove the following proposition.  
 
\begin{proposition} \label{prop-Y-less-T}
For any $q\ge 2$, we have $\lambda (Y_{n,2,q}) < \lambda (T_{n,2,q})$. 
\end{proposition}

\begin{proof}
We start the proof in two cases. 
(a) If $n$ is even, by a simple computation as in the proof of Lemma \ref{lem-LLP}, we can obtain that $\lambda (T_{n,2,q})$ is the largest root of 
\begin{align*}  h_1(x) &:= x^4 - x^3 - (n^2 x^2)/4 - q x^2 + (n^2 x)/4  - (n x)/2  - n q x \\ 
& \quad  -(n q)/2 + (n^2 q)/4 - (n q^2)/2  .  
\end{align*}
Let $f_1(x)$ be the polynomial defined in Lemma \ref{lem-LLP}. It follows that 
\[  x f_1(x) - h_1(x) = q x^2 + (n x)/2 + (n q^2)/2 + (n q)/2 - (n^2 q)/4.  \]
 It is easy to see that for every $x> \frac{n}{2}$, we have 
 \[  x f_1(x) - h_1(x) > \tfrac{n}{2}f_1(\tfrac{n}{2}) - h_1(\tfrac{n}{2}) 
 =  \tfrac{1}{4}  (n^2 + 2 q (1 + q)n )> 0. \]
We conclude that $x f_1(x) > h_1 (x)$ for every $x> \frac{n}{2}$. 
Note that $\lambda (Y_{n,2,q})$ is the largest root of $x f_1(x)$. 
Combining with $\lambda (Y_{n,2,q}), \lambda (T_{n,2,q}) > \frac{n}{2}$, 
we obtain $\lambda (Y_{n,2,q}) < \lambda (T_{n,2,q})$. 

(b) If $n$ is odd, then $\lambda (T_{n,2,q})$ is the largest root of 
\begin{align*}
h_2(x) &:= x^4 - x^3 + x^2/4 - (n^2 x^2)/4 - q x^2 + x/4 - (n x)/2 + (n^2 x)/4 + q x - n q x \\
&\quad + q/4 - (n q)/2 + (n^2 q)/4 + q^2/2 - (n q^2)/2 . 
\end{align*}
Let $f_2(x)$ be the polynomial defined in Lemma \ref{lem-LLP}. Then 
\[  x f_2 (x) - h_2(x) = q x^2 + ( n-1) x/2 
 + ( n q)/2 - (n^2 q)/4 +  ( n-1) q^2/2 -q/4 .   \]
 For every $x> \frac{n-1}{2}$, it follows that 
 \[ x f_2 (x) - h_2(x) > \tfrac{n-1}{2}f_2(\tfrac{n-1}{2}) - h_2(\tfrac{n-1}{2}) 
 = \tfrac{1}{4} (n-1) (2 q^2 + n -1) >0.
   \]
In this case, observe that $\lambda (Y_{n,2,q}), \lambda (T_{n,2,q}) > \frac{n-1}{2}$, so we get $\lambda (Y_{n,2,q}) < \lambda (T_{n,2,q})$. 
\end{proof}

\subsection{The tightness of $q$ in Theorem \ref{thm-Yn2q}}

In this section, we provide an example showing that the bound of $q$ in Theorem \ref{thm-Yn2q} is tight up to a constant factor. 
Recall that $Y_{n,2,q}$ and $T_{n,2,q}$ are obtained from $T_{n,2}$ by adding a matching, and a star with $q$ edges into the partite set of size $\lceil n/2\rceil$, respectively.  

\begin{example} \label{exam}
If $q=0.8\sqrt{n}$ and $n\ge 110$ are integers, then $  \lambda (T_{n,2,q-1}) > \lambda (Y_{n,2,q})$. 
However, the graph $T_{n,2,q-1}$ contains exactly $(q-1)\lfloor n/2 \rfloor$ triangles. 
\end{example}

\begin{proof} 
We partition the proof into two cases. 
(a) If $n$ is even, by a similar argument as in the proof of Lemma \ref{lem-LLP}, we know that $\lambda (T_{n,2,q-1})$ is the largest root of 
\begin{align*}
 g_1(x)&:= x^4 - x^3  - (n^2 x^2)/4 - q x^2 + x^2 + (n^2 x)/4 + (n x)/2  - n q x \\ 
&\quad -(n^2/4) + (n q)/2 + (n^2 q)/4 - (n q^2)/2   .  
\end{align*}
Let $f_1(x)$ be defined in Lemma \ref{lem-LLP}. 
Since $q=0.8 \sqrt{n}$, by computation, we see that 
\[ x f_1(x) - g_1(x) =  ( 0.8 \sqrt{n} -1) x^2 - 0.5 n x
  - 0.2 n^{5/2} + 0.57 n^2 - 0.4 n^{3/2} . \]
For every $n\ge 4$ and $x> {n}/{2}$, it follows that 
\[  \frac{\mathrm{d}}{\mathrm{d} x}(xf_1(x) - g_1(x)) =  (1.6 \sqrt{n} -2) x -0.5 n  >0. \] 
Then for every $x> n/2$, we have 
\begin{align*}
  x f_1(x) - g_1(x) \ge \tfrac{n}{2}f_1(\tfrac{n}{2}) - g_1(\tfrac{n}{2})  
  =0.07 n^2  -0.4 n^{3/2}  >0. 
  \end{align*}
 We conclude that $xf_1(x) > g_1(x)$ for every $x> n/2$ and $n\ge 33$. 
 Note that $\lambda (Y_{n,2,q})$ and $\lambda (T_{n,2,q-1})$ are the largest roots of $xf_1(x)$ and $g_1(x)$, respectively. 
Since $\lambda (Y_{n,2,q}) > \frac{n}{2}$ and $\lambda (T_{n,2,q-1})> \frac{n}{2}$, we obtain $\lambda (Y_{n,2,q}) < \lambda (T_{n,2,q-1})$. 

\medskip 
(b) If $n$ is odd, by computation, we get that $\lambda (T_{n,2,q-1})$ is the largest root of 
\begin{equation*}
\begin{aligned}
g_2(x) &:= x^4- x^3+ (5 x^2)/4 - (n^2 x^2)/4 - q x^2 - (
 3 x)/4 + (n x)/2 + (n^2 x)/4 + q x - n q x \\ 
& \quad  +1/4 - n^2/4 - (3 q)/4 + (n q)/2 + (n^2 q)/4 + q^2/2 - (n q^2)/2  .
\end{aligned}
\end{equation*}
Let $f_2(x)$ be determined in Lemma \ref{lem-LLP}. 
Substituting $q=0.8 \sqrt{n}$, we compute that 
\begin{equation*}
\begin{aligned}  
xf_2(x) -g_2(x) &= ( 0.8 \sqrt{n} -1) x^2 - 0.5(n-1) x  - 0.2 n^{5/2} + 0.57 n^2 \\ & \quad  - 0.4 n^{3/2} - 0.32 n + 0.6 \sqrt{n} -0.25 .
\end{aligned}
\end{equation*}
For every $x> \frac{n-1}{2}$, we have 
 \[ \frac{\mathrm{d}}{\mathrm{d} x} (xf_2(x) - g_2(x) ) = (1.6 \sqrt{n} -2)x - 0.5(n-1) >0. \]
 It follows that for every $n\ge 110$ and $x> \frac{n-1}{2}$, 
 \[ xf_2(x) - g_2(x) > \tfrac{n-1}{2} f_2(\tfrac{n-1}{2}) - g_2(\tfrac{n-1}{2}) 
 = 0.07 n^2 - 0.8 n^{3/2} + 0.68 n + 0.8 \sqrt{n} - 0.75 > 0. \] 
Thus, we get $xf_2(x) > g_2(x)$ for every $x> \frac{n-1}{2}$. 
It is easy to see that $\lambda (Y_{n,2,q})> \frac{n-1}{2}$ and $\lambda (T_{n,2,q-1}) > \frac{n-1}{2}$. So we conclude that $\lambda (Y_{n,2,q}) < \lambda (T_{n,2,q-1})$. 
\end{proof}

\subsection{Counting the number of triangles}

In 2007, Bollob\'{a}s and Nikiforov  \cite{BN2007jctb} 
established some powerful results on counting the number of 
triangles (cliques) of a graph 
in terms of the spectral radius.

\begin{lemma}[See \cite{BN2007jctb,CFTZ20,NZ2021}] 
\label{thm-BN-CFTZ-NZ}
Let $G$ be a  graph with $m$ edges. Then 
\begin{equation*} 
  t(G) \ge \frac{\lambda \bigl(\lambda^2 - m\bigr)}{3}. 
  \end{equation*}
    The equality holds if and only if $G$ is a complete bipartite graph.  
\end{lemma}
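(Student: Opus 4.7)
The plan is to use the standard spectral identities for traces of powers of the adjacency matrix. Let $A = A(G)$ and let $\lambda_1 \ge \lambda_2 \ge \cdots \ge \lambda_n$ be its eigenvalues, so $\lambda_1 = \lambda(G)$. By counting closed walks of lengths $2$ and $3$ in $G$, we have
\[
\sum_{i=1}^{n} \lambda_i^2 = \tr(A^2) = 2m \quad \text{and} \quad \sum_{i=1}^{n} \lambda_i^3 = \tr(A^3) = 6\, t(G).
\]

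The key algebraic step I would carry out is to isolate $\lambda_1$ in the cubic sum. Writing $6 t(G) = \lambda_1^3 + \sum_{i\ge 2}\lambda_i^3$ and substituting $\lambda_1^2 - 2m = -\sum_{i\ge 2}\lambda_i^2$, the desired inequality $6 t(G) \ge 2\lambda_1(\lambda_1^2 - m)$ rearranges to
\[
\sum_{i\ge 2}\lambda_i^2\bigl(\lambda_i + \lambda_1\bigr)\ \ge\ 0.
\]
Since $A$ has non-negative entries, the Perron--Frobenius theorem ensures $|\lambda_i|\le \lambda_1$, so $\lambda_i + \lambda_1 \ge 0$ for every $i\ge 2$. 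Each summand is then non-negative, which immediately yields the inequality.

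For the equality case, every summand above must vanish, so for every $i\ge 2$ we have either $\lambda_i = 0$ or $\lambda_i = -\lambda_1$. Combined with $\sum_i \lambda_i = \tr(A) = 0$, the spectrum of $G$ is forced to be $\{\lambda_1,\, 0,\ldots,0,\, -\lambda_1\}$. The main step I expect to need more care on is converting this spectral information back into the structural statement that $G$ is a complete bipartite graph. Here I would invoke the classical fact that symmetry of the spectrum about zero forces each non-trivial component of $G$ to be bipartite, while a connected bipartite graph whose adjacency matrix has rank two must be complete bipartite; indeed, the identity $A^3 = \lambda_1^2 A$ implies that any two vertices on the same side share identical neighborhoods, forcing completeness. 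Isolated vertices contribute nothing to $\lambda$, $m$, or $t(G)$, so they do not affect the argument. Conversely, for $K_{a,b}$ one has $\lambda = \sqrt{ab}$ and $m = ab$, hence $\lambda^2 - m = 0 = 6 t(G)$, confirming that equality is attained precisely in this family.
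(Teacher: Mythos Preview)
Your proof is correct and is the standard trace argument; the paper itself does not supply a proof of this lemma but merely cites \cite{BN2007jctb,CFTZ20,NZ2021}, attributing the inequality to Bollob\'{a}s--Nikiforov and Cioab\u{a}--Feng--Tait--Zhang and the equality characterization to Ning--Zhai. The inequality step via $\sum_{i\ge 2}\lambda_i^{2}(\lambda_i+\lambda_1)\ge 0$ is exactly the usual derivation, and your treatment of the equality case (spectrum $\{\lambda_1,0,\ldots,0,-\lambda_1\}$ forcing a bipartite graph whose biadjacency matrix has rank one) is the right idea; the cleanest way to finish is to note that a rank-one $0$--$1$ matrix with no zero rows or columns is an all-ones block, so the non-isolated part of $G$ is complete bipartite.
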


As a consequence, Lemma \ref{thm-BN-CFTZ-NZ} implies an interesting result of Nosal \cite{Nosal1970}, which states that every $m$-edge graph $G$ with  $\lambda (G)> \sqrt{m}$ contains a triangle; see \cite{Niki2002cpc,Ning2017-ars} for related results. 
The inequality in Lemma \ref{thm-BN-CFTZ-NZ} can be rephrased as the following versions: 
\begin{equation*} 
   \lambda^3 \le 3t + m \lambda 
\quad \Leftrightarrow \quad {t\ge \frac{1}{3}\lambda (\lambda^2 - m)  } 
\quad \Leftrightarrow \quad m \ge \lambda^2- \frac{3t}{\lambda}. 
\end{equation*}
This inequality was initially obtained by 
Bollob\'{a}s and Nikiforov \cite[Theorem 1]{BN2007jctb}, 
and it was independently discovered by Cioab\u{a},  Feng, 
Tait and Zhang \cite{CFTZ20}, as well as Ning and Zhai \cite{NZ2021}.  
The case of equality was characterized by Ning and Zhai \cite{NZ2021}.


\medskip 
Moon--Moser's inequality states that if 
$G$ has $n$ vertices and $m$ edges, 
then 
\[ t(G)\ge \frac{4m}{3n} \left( m - \frac{n^2}{4} \right). \]  
We refer to \cite[p. 297]{Bollobas78} and \cite[p. 443]{Lov1979} for detailed proofs. 
This result gives the supersaturation on triangles 
for graphs with more than $n^2/4$ edges. 
Next, we present a generalization for a graph with 
less than $n^2/4$ edges whenever this graph 
is far from being bipartite. 
More precisely, for $\varepsilon >0$, 
we say that a graph $G$ is {\it $\varepsilon$-far from being bipartite} if 
any subgraph $G'$ of $G$ 
with more than $e(G)-\varepsilon$ edges is not bipartite. 
In other words, 
if $G$ is $\varepsilon$-far from being bipartite, then we must  
delete at least $\varepsilon$ edges from $G$ to make it bipartite.

\begin{theorem}[See \cite{BBCLMS2017}] \label{thm-far-bipartite} 
If $G$ is $\varepsilon$-far from being bipartite, then 
\[ t(G) \ge \frac{n}{6}\left( m +\varepsilon-\frac{n^2}{4}\right).  \] 
\end{theorem}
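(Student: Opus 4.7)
The plan is to combine the Moon--Moser-style pointwise bound $|N(u)\cap N(v)|\ge d(u)+d(v)-n$ with a max-cut analysis that captures the $\varepsilon$-far-from-bipartite hypothesis. Since the conclusion is trivial when $m+\varepsilon\le n^2/4$, I will assume $m+\varepsilon>n^2/4$ in what follows. I then fix a maximum cut $(A,B)$ of $G$: writing $k=e(A,B)$, the $\varepsilon$-far hypothesis yields $e(A)+e(B)=m-k\ge\varepsilon$, and the standard local-swap property of max-cuts gives $d_{\mathrm{opp}}(v)\ge d(v)/2$ for every vertex $v$, where ``$\mathrm{opp}$'' denotes the opposite part.

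The next step is to estimate $3t(G)=\sum_{uv\in E(G)}|N(u)\cap N(v)|$ by splitting the sum into cross edges and inside edges and bounding each term carefully. For cross edges, the baseline pointwise bound $|N(u)\cap N(v)|\ge d(u)+d(v)-n$ combined with Cauchy--Schwarz on $\sum_v d(v)^2\ge 4m^2/n$ yields a Moon--Moser-type contribution. For each inside edge $uv\in E(A)$, I would restrict attention to triangles whose third vertex lies in the opposite part $B$, applying inclusion--exclusion to $N_B(u)\cap N_B(v)$ and the max-cut inequality $d_B(u)\ge d(u)/2$ to obtain an extra contribution of size roughly $\frac{1}{2}(d(u)+d(v))-|B|$ per inside edge, with a symmetric bound for edges in $E(B)$. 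Aggregating over the at least $\varepsilon$ inside edges and using $|A|+|B|=n$ together with $\sum_v d(v)=2m$, one recovers an extra gain on the order of $n\varepsilon/6$, which combined with the cross-edge term should match the target bound $\frac{n}{6}(m+\varepsilon-n^2/4)$.

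The principal obstacle I expect is the arithmetic bookkeeping required to extract exactly the constant $n/6$: balancing the cross-edge and inside-edge contributions without losing constant factors, carefully tracking the multiplicity with which each triangle is counted (a $2$-$1$ split triangle contributes once to an inside edge and twice to cross edges, while a $3$-$0$ triangle contributes three times to inside edges), and controlling occasional negative pointwise bounds $d(u)+d(v)-n<0$ by appealing to $m+\varepsilon>n^2/4$ to force a sufficiently large average degree. Once these bookkeeping issues are resolved, the final inequality should emerge from a direct algebraic rearrangement, closely paralleling the Moon--Moser derivation but sharpened by the $\varepsilon$-far input.
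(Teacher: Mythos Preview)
Your max-cut strategy is genuinely different from the argument the paper has in mind, and the specific heuristic you lean on is where the plan goes wrong.

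The paper's proof does \emph{not} fix a single bipartition. Instead it exploits the $\varepsilon$-far hypothesis at \emph{every} vertex: for each $v$, the partition $(N(v),\,V\setminus N(v))$ must have $e(N(v))+e(V\setminus N(v))\ge\varepsilon$. Summing these $n$ inequalities, using $\sum_v e(N(v))=3t(G)$, the identity $\sum_{w\notin N(v)}d(w)=m+e(V\setminus N(v))-e(N(v))$, and Cauchy--Schwarz on $\sum_v d(v)^2$, one obtains $6t(G)\ge n\varepsilon-nm+\sum_v d(v)^2\ge n\varepsilon-nm+4m^2/n$, and the elementary inequality $(4m-n^2)^2\ge0$ finishes. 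The $n\varepsilon$ term drops out immediately because the hypothesis is invoked $n$ times.

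Your plan invokes the hypothesis only once, at the max-cut, and then claims that summing the inside-edge bound $\tfrac{1}{2}(d(u)+d(v))-|B|$ over the $\ge\varepsilon$ inside edges yields ``an extra gain on the order of $n\varepsilon/6$''. This is the gap: in the regime that matters ($m\approx n^2/4$, balanced cut, typical degrees $\approx n/2$), that per-edge quantity is essentially zero, not of order $n$. If you actually carry out your decomposition you get
\[
3t(G)\ \ge\ \sum_v d(v)^2-\sum_v d_{\mathrm{same}}(v)^2-nk-|B|\,e(A)-|A|\,e(B),
\]
and the $n\varepsilon$ contribution is hidden in $-nk=-nm+n(e(A)+e(B))\ge -nm+n\varepsilon$, i.e.\ in the \emph{cross-edge} term, not the inside-edge term. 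To finish from here you would still need to control $\sum_v d_{\mathrm{same}}(v)^2+|B|e(A)+|A|e(B)$ against the slack in Cauchy--Schwarz and in $(4m-n^2)^2\ge0$, which is exactly the delicate bookkeeping you flagged and did not resolve. So the roadmap misidentifies where the gain comes from, and the step you label as routine is the one that actually needs a new idea. The vertex-by-vertex application of the hypothesis is what makes the paper's argument short and constant-exact.
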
  

For the convenience of the reader, we include a proof here.  

\begin{proof} 
 For each vertex $v \in V(G)$, let $N_v=N(v)$ 
 and $N_v^c=V(G)\setminus N(v)$.  
Since $G$ is $t$-far from being bipartite, we must have $e(N_v) + e(N_v^c) \ge \varepsilon$ for every $v\in V(G)$. 
Indeed, if $e(N_v) + e(N_v^c) < \varepsilon$ for some $v$, then deleting all edges inside $N_v$ and $N_v^c$ yields a bipartite subgraph with more than $e(G) - \varepsilon$ edges, a contradiction.  
For each $v\in V(G)$, we have 
\[  \sum_{w\in N_v^c} d(w) = 2e(N_v^c) + e(N_v^c,N_v) 
=e(N_v^c) + m - e(N_v) 
\ge m+ \varepsilon -2e(N_v).  
  \]
Summing this inequality over all vertices $v\in V(G)$ gives  
  \[  \sum_{v\in V} \sum_{w\in N_v^c} d(w) 
  \ge mn + \varepsilon n - 2 \sum_{v\in V} e(N_v) = 
  mn + \varepsilon n - 6t(G), \]
  where we used the identity $3t(G) =\sum_{v\in V}e(N_v)$. 
 On the other hand, we get  
\[ \sum_{v\in V} \sum_{w\in N_v^c} d(w)= 
\sum_{v\in V} \left( 2m - \sum_{w\in N_v}d(w) \right) 
= 2mn - \sum_{w\in V} d^2(w). \]
Combining the two inequalities, 
we obtain 
\[  6t(G)  
\ge \varepsilon n - mn + \sum_{w\in V} d^2(w) 
\ge  \varepsilon n - mn + \frac{4m^2}{n}, \]
where the last inequality follows by Cauchy--Schwarz's  inequality. 
Note that  
${4m^2}/{n} \ge 2mn - {n^3}/{4}$. Substituting this into the previous inequality gives the desired bound. 
\end{proof}

The supersaturation on triangles is usually deduced from 
 Moon--Moser's inequality. 
In fact, Theorem \ref{thm-far-bipartite} 
can also imply the supersaturation on triangles.  
Note that every bipartite graph of order $n$ has at most $n^2/4$ edges. 
Thus,  if $G$ has at least ${n^2}/{4} +q$ edges, 
then it is $q$-far from being bipartite. 
 Theorem \ref{thm-far-bipartite} implies that 
$G$ contains at least ${qn}/{3}$ triangles.  
In particular, setting $q=\varepsilon n^2$, 
we obtain that 
$G$ has  at least ${\varepsilon n^3}/{3}$ triangles.  

\medskip 
For dealing with some extremal graph problems, 
it is a great advantage that 
applying Theorem \ref{thm-far-bipartite} can 
help us to get rid of the use of triangle removal lemma so that 
we do not require the order of a graph to be sufficiently large. 
This approach could be considered to be a key ingredient in our paper, and 
it was proven to be effective and feasible in many extremal graph problems. 
For other related applications, 
we refer the reader to \cite{LFP2024-triangular,LFP-count-bowtie}.

\subsection{A spectral stability for graphs with few triangles}

In what follows, we provide a warm-up practice    
that has its independent interest. 
We concentrate on explaining the main ideas which we 
believe have wide applicability.

\begin{theorem} \label{thm-tri-effi}
If $G$ is a graph on $n$ vertices with 
$\lambda (G)\ge n/2$ and $t(G)\le kn/2$, then 
\[ e(G)> \frac{n^2}{4} - 3k,  \] 
 and there exists a vertex partition 
 $V(G)=S\cup T$ such that 
 \[  e(S,T)\ge \frac{n^2}{4} -9k \] 
  and 
 \[ \frac{n}{2} - 3\sqrt{k} \le |S|, |T| \le  \frac{n}{2} + 3\sqrt{k}. \]  
 Moreover, we have 
 \[   \frac{n}{2} -12 k \le \delta (G)\le 
 \lambda (G)\le \Delta (G) \le  
  \frac{n}{2} + 9k . \] 
\end{theorem}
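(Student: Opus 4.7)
The plan is to leverage the two structural tools just introduced, namely Lemma~\ref{thm-BN-CFTZ-NZ} in its rearranged form $m \ge \lambda^{2} - 3t/\lambda$, and the supersaturation-stability statement Theorem~\ref{thm-far-bipartite}, which converts a bound on the triangle count into closeness to bipartiteness. The first conclusion drops out immediately: substituting $\lambda \ge n/2$ and $t \le kn/2$ into $m \ge \lambda^{2} - 3t/\lambda$ gives $e(G) \ge n^{2}/4 - 3k$.

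Next I will introduce $\varepsilon^{*}$, the edit distance of $G$ to bipartiteness, so that $G$ is $\varepsilon^{*}$-far from bipartite and $e(S,T) = m - \varepsilon^{*}$ for an optimal bipartition $V(G) = S \cup T$. Applying Theorem~\ref{thm-far-bipartite} with $\varepsilon = \varepsilon^{*}$ yields
\[
 \frac{kn}{2} \ge t(G) \ge \frac{n}{6}\Bigl( m + \varepsilon^{*} - \frac{n^{2}}{4}\Bigr),
\]
so $m + \varepsilon^{*} \le n^{2}/4 + 3k$, and combined with $m \ge n^{2}/4 - 3k$ this forces $\varepsilon^{*} \le 6k$ together with
\[
 e(S,T) \,=\, m - \varepsilon^{*} \,\ge\, 2m - \frac{n^{2}}{4} - 3k \,\ge\, \frac{n^{2}}{4} - 9k.
\]
The size estimate is then immediate: writing $|S| = n/2 + a$, $|T| = n/2 - a$, one has $|S||T| = n^{2}/4 - a^{2} \ge n^{2}/4 - 9k$, whence $|a| \le 3\sqrt{k}$.

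For the degree bounds I will use two simple consequences of the above: $e(S) + e(T) = \varepsilon^{*} \le 6k$, and the total number of non-edges between $S$ and $T$ is $|S||T| - e(S,T) \le 9k$. Since the classical inequalities $\delta(G) \le \lambda(G) \le \Delta(G)$ are automatic, it suffices to bound $\Delta(G)$ from above and $\delta(G)$ from below. For the maximum degree, fix $v$ realizing $\Delta(G)$ and say $v \in S$; each neighbour of $v$ in $S$ accounts for a distinct edge of $G[S]$, giving $d_{S}(v) \le e(S) \le 6k$, and with $d_{T}(v) \le |T| \le n/2 + 3\sqrt{k}$ and $3\sqrt{k} \le 3k$ this yields $\Delta(G) \le n/2 + 9k$. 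For the minimum degree, pick $v$ realizing $\delta(G)$, say $v \in S$; the non-neighbours of $v$ inside $T$ form a subset of the non-edges between $S$ and $T$, so there are at most $9k$ of them, and therefore $\delta(G) \ge d_{T}(v) \ge |T| - 9k \ge n/2 - 12k$.

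The conceptual heart of the argument is the combined use of Lemma~\ref{thm-BN-CFTZ-NZ} and Theorem~\ref{thm-far-bipartite} to secure the sharp bound $\varepsilon^{*} = O(k)$, rather than the $\varepsilon^{*} = o(n^{2})$ that the classical stability route would deliver via the triangle removal lemma; this is precisely the supersaturation-stability phenomenon advertised in the introduction. Once this sharp bound on $\varepsilon^{*}$ is in hand, the rest is routine double-counting, and the only subtlety is balancing the $\sqrt{k}$-loss coming from the size imbalance against the $k$-losses coming from internal edges and missing cross-edges, so that the constants $9k$ and $12k$ in the statement are matched exactly.
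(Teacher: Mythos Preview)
Your proof is correct and follows essentially the same route as the paper's: Lemma~\ref{thm-BN-CFTZ-NZ} for the edge lower bound, Theorem~\ref{thm-far-bipartite} to bound the distance to bipartiteness by $6k$, and then elementary counting for the partition sizes and degree bounds. The only cosmetic difference is that you apply Theorem~\ref{thm-far-bipartite} directly with the exact edit distance $\varepsilon^{*}$, whereas the paper argues by contradiction that $G$ is not $6k$-far from bipartite; these are equivalent, and your phrasing is arguably a little cleaner.
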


Theorem \ref{thm-tri-effi} 
gives an effective way to obtain a bipartition of a graph with large spectral radius and few triangles.  To some extend, 
this theorem indicates how the supersaturation-stability method 
can be used to deal with a variety of spectral extremal problems.  
The proof of Theorem \ref{thm-tri-effi}  is a well combination of 
Lemma \ref{thm-BN-CFTZ-NZ} and Theorem 
\ref{thm-far-bipartite}.

\begin{proof} 
 First, if $G$ is a complete bipartite graph, 
then $e(G)\le \lfloor n^2/4\rfloor$. 
Since  $e(G)=\lambda^2(G) \ge n^2/4$,  
it follows that $n$ is even and $G=K_{\frac{n}{2}, \frac{n}{2}}$. In this case, 
 the desired result holds immediately.  
  Now suppose that $G$ is not a complete bipartite graph.  
Then the inequality in Lemma \ref{thm-BN-CFTZ-NZ} is strict.  So we get $e(G)> \lambda^2 - (3t)/ \lambda 
 \ge \lambda^2 - (6t)/n \ge n^2/4 -3k$. 
 Theorem \ref{thm-far-bipartite} implies that $G$ is not 
 $6k$-far from being bipartite. Otherwise, if $G$ is 
 $6k$-far from being bipartite, then $t(G)
 \ge {n}/{6}\cdot (e(G) + 6k - n^2/4) >   kn /2$, 
 which contradicts the assumption. 
 Hence $G$ is not $6k$-far from being bipartite. 
Consequently, there exists a partition $V(G)=S\cup T$ such that 
 $e(S) + e(T)< 6k$. Moreover, we get 
 $e(S,T)= e(G) - e(S) - e(T) > n^2/4 - 9k$. 
It follows that 
  $n/2 - 3\sqrt{k} < |S|, |T| < n/2 + 3\sqrt{k}$. 
Note that $e(K_{|S|,|T|}) - e(S,T) < |S| |T| - (n^2/4 - 9k) \le 9k$. 
In other words,  there are fewer than $9k$ missing edges between $S$ and $T$. Consequently, we get $\delta (G)>  
(n/2 - 3\sqrt{k}) - 9k \ge n/2- 12k$. 
Finally, since $e(S) + e(T)< 6k$, we have  
$\Delta (G)< (n/2 + 3\sqrt{k}) +6k \le n/2 +9k$, as desired. 
\end{proof}

\section{Proof of Theorem \ref{thm-Yn2q}}
\label{sec4}

The case $q=1$ was proved by Li, Lu and Peng \cite{LLP2024-AAM}. 
Next, we consider the case $2\le q \le \frac{1}{11}\sqrt{n}$
although our proof works for $q=1$. 
Throughout this section, 
let $G$ be a graph on $n$ vertices with
 $\lambda (G)\ge \lambda (Y_{n,2,q})$. 
Suppose on the contrary that $t(G)< q\lfloor {n}/{2}\rfloor$. 
In the sequel, we will deduce a contradiction. 
The key insight of our proof lies in the application of the supersaturation-stability (Theorem \ref{thm-far-bipartite}). 
We outline the main steps as follows. 

\begin{itemize}
\item[\ding{73}] 
Using Lemma \ref{thm-BN-CFTZ-NZ}, 
we get that $G$ contains more than $\lfloor n^2/4\rfloor - q$ edges. 
Theorem \ref{thm-far-bipartite} 
implies that we can delete less than 
$4q$ edges from $G$ to make it bipartite. 
So we get a bipartition $V(G) =S\cup T$ with 
$e(S) + e(T)< 4q$ and $ |S|,|T| \approx \frac{n}{2} \pm \sqrt{5q}$.

\item[\ding{73}] 
Secondly, we show that 
$e(S) + e(T)\le q$ and  $\delta (G) \ge \frac{n}{2} - 2q$; see Lemmas \ref{lem-ST-q} and \ref{refine-degree}. 
Let $\bm{x}\in \mathbb{R}^n$ be the Perron vector of $G$ satisfying $\max\{x_v: v\in V(G)\}=1$. 
Then we will prove in Lemma \ref{eigen-entry} 
that ${x}_u \ge 1- {20q}/{n}$ 
for every $u\in V(G)$.

\item[\ding{73}] 
We prove in Lemma \ref{balanced} 
that the bipartition of $G$ 
is balanced, i.e., $\bigl| |S| - |T|\bigr| \le 1$. 
Then we shall prove that 
$t(G)\ge q\lfloor n/2\rfloor$, 
or $G$ satisfies $\lambda (G) < \lambda (Y_{n,2,q})$, a contridiction. 

\end{itemize}

In what follows, we divide the proof into a sequence of lemmas.

 \begin{lemma} \label{approx-partition} 
 There is a partition $V(G)=S\cup T$ such that 
 \[  e(S) + e(T)< 4q  \]
 and 
 \[ e(S,T)> \frac{n^2}{4} - 5q. \]
Consequently, we get 
 \[ \frac{n}{2} - \sqrt{5q} < |S|, |T| < \frac{n}{2} + \sqrt{5q}.  \]
 \end{lemma}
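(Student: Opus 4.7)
The plan is to combine the spectral lower bound on edges from Lemma \ref{thm-BN-CFTZ-NZ} with the supersaturation-stability result Theorem \ref{thm-far-bipartite} to force the existence of a nearly balanced bipartition of $V(G)$ with few edges inside the parts. The standing assumption $t(G) < q\lfloor n/2 \rfloor$ will enter in two places: once to make $3t(G)/\lambda(G)$ small when lower-bounding $e(G)$, and once to rule out that $G$ is $6q$-far from being bipartite. The numerical balance between these two uses is what fixes the constants $6q$ and $9q$ in the statement.

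First, I would establish the edge estimate $e(G) > n^2/4 - 3q$. Since $Y_{n,2,1}$ is a subgraph of $Y_{n,2,q}$, Lemma \ref{lem-LLP} gives
\[ \lambda^2(G) \;\ge\; \lambda^2(Y_{n,2,q}) \;\ge\; \lambda^2(Y_{n,2,1}) \;>\; \left\lfloor n^2/4\right\rfloor + 2 \;>\; n^2/4, \]
and in particular $\lambda(G) > n/2$. This forces $G$ not to be a complete bipartite graph (whose spectral radius is at most $\sqrt{\lfloor n^2/4\rfloor}$), so the inequality in Lemma \ref{thm-BN-CFTZ-NZ} is strict, and together with $t(G) < q\lfloor n/2\rfloor$ and $\lambda(G) > n/2$ it yields
\[ e(G) \;>\; \lambda^2(G) - \frac{3\,t(G)}{\lambda(G)} \;>\; \frac{n^2}{4} - \frac{3\,q\lfloor n/2\rfloor}{n/2} \;\ge\; \frac{n^2}{4} - 3q. \]

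Next, I would argue that $G$ cannot be $6q$-far from being bipartite. Indeed, suppose otherwise; then Theorem \ref{thm-far-bipartite} combined with the edge lower bound above produces
\[ t(G) \;\ge\; \frac{n}{6}\left( e(G) + 6q - \frac{n^2}{4}\right) \;>\; \frac{n}{6}\cdot 3q \;=\; \frac{qn}{2} \;\ge\; q\lfloor n/2 \rfloor, \]
contradicting the standing assumption. Consequently, by definition of being far from bipartite, there exists a bipartition $V(G) = S \cup T$ satisfying $e(S) + e(T) < 6q$.

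The remaining estimates now fall out:
\[ e(S,T) \;=\; e(G) - e(S) - e(T) \;>\; \frac{n^2}{4} - 3q - 6q \;=\; \frac{n^2}{4} - 9q, \]
and from $e(S,T) \le |S|(n-|S|)$ one obtains $(|S| - n/2)^2 < 9q$, hence $n/2 - 3\sqrt{q} < |S|,|T| < n/2 + 3\sqrt{q}$. I do not anticipate any substantive obstacle here; the only step that requires care is verifying that the choice $\varepsilon = 6q$ in the supersaturation-stability bound is compatible with the edge estimate $e(G) > n^2/4 - 3q$, which is exactly what makes the $qn/2$ threshold pop out and contradict the triangle-count hypothesis. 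It is precisely the use of Theorem \ref{thm-far-bipartite}, rather than the triangle removal lemma, that allows the argument to work for all $n \ge 300q^2$ with $q$ permitted to grow with $n$.
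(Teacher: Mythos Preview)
Your proof is correct and follows essentially the same route as the paper: a spectral edge lower bound via Lemma~\ref{thm-BN-CFTZ-NZ}, then Theorem~\ref{thm-far-bipartite} with $\varepsilon=6q$ to rule out being $6q$-far from bipartite, followed by the arithmetic for $e(S,T)$ and the part sizes. Your explicit remark that $G$ cannot be complete bipartite (to justify the strict inequality in Lemma~\ref{thm-BN-CFTZ-NZ}) is a nice touch the paper leaves implicit, and your derivation of the size bounds via $(|S|-n/2)^2<9q$ is just the direct form of the paper's contradiction argument.
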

 
 \begin{proof}
By Lemma \ref{lem-LLP}, 
 we know  that $\lambda^2 (G) \ge \lambda^2 (Y_{n,2,q})> \lfloor n^2/4\rfloor +2q$. 
 Since $t(G)< q\lfloor {n}/{2}\rfloor$ and $\lambda (G)> \frac{n}{2}$, 
  by  Lemma \ref{thm-BN-CFTZ-NZ}, we have 
\begin{equation} \label{eq-eG}   
e(G)\ge \lambda^2(G) - \frac{3t(G)}{\lambda (G)} 
>  \lambda^2(G) - \frac{6t(G)}{n} >    
\left\lfloor \frac{n^2}{4} \right\rfloor - q . 
\end{equation}
We claim that $G$ is not $4q$-far from being bipartite. 
Otherwise, if $G$ is $4q$-far from being bipartite, 
then using Theorem \ref{thm-far-bipartite}, we get 
\[  t(G)\ge \frac{n}{6} \left(e(G) + 4q - \frac{n^2}{4} 
\right)> q \left\lfloor \frac{n}{2} \right\rfloor, \]  
which contradicts with the assumption.  
So $G$ is not $4q$-far from being bipartite. 
That is to say, we can delete less than $4q$ edges from 
$G$ to make it bipartite, i.e., 
there exists a vertex partition 
$V(G)=S\cup T$ such that 
\begin{equation*}
e(S) + e(T)< 4q. 
\end{equation*}
From the above discussion, we get 
\[  e(S,T) 
= e(G) - e(S) - e(T) >  \frac{n^2}{4} - 5q. \] 
Without loss of generality, we may assume that 
$|S| \le |T|$. 
Suppose on the contrary that $|S| \le \frac{n}{2} - \sqrt{5q}$. 
Then $|T|= n-|S| \ge \frac{n}{2} + \sqrt{5q}$. It yields that 
\[   e(S,T)\le |S| |T| \le \left(\frac{n}{2} - \sqrt{5q} \right) 
\left(\frac{n}{2} +  \sqrt{5q} \right) = \frac{n^2}{4} -5q, \] 
 a contradiction. 
Thus, we must have 
 $\frac{n}{2} - \sqrt{5q} < |S|, |T| < \frac{n}{2} +\sqrt{5q}$.  
\end{proof}

\begin{lemma} \label{lem-ST-q}
We have $e(S) + e(T)\le q$. 
\end{lemma}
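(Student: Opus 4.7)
My plan is to argue by contradiction: assume $e(S)+e(T)\ge q+1$, and show that the dense bipartite structure between $S$ and $T$ forces $G$ to contain at least $q\lfloor n/2\rfloor$ triangles, contradicting the standing assumption $t(G)<q\lfloor n/2\rfloor$. The key intuition, in the spirit of Lov\'{a}sz--Simonovits, is that every ``extra'' edge embedded inside $S$ or $T$ produces roughly $n/2$ new triangles by pairing with common neighbors on the opposite side, while the small defect in $e(S,T)$ contributes only an $O(q^2)$ error that is absorbed by the hypothesis $n\ge 300q^2$.

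Write $a=e(S)$ and $b=e(T)$, and for $v\in S$ let $d_T^c(v)=|T|-d_T(v)$ be the number of non-neighbors of $v$ in $T$ (similarly $d_S^c(u)$ for $u\in T$). From Lemma \ref{approx-partition}, together with the arithmetic-geometric fact $|S||T|\le n^2/4$, I obtain
\[
\sum_{v\in S}d_T^c(v)\;=\;|S||T|-e(S,T)\;<\;9q,
\]
and symmetrically $\sum_{u\in T}d_S^c(u)<9q$. Moreover, $\max_{v\in S}d_{G[S]}(v)\le e(S)<6q$, and similarly on $T$. Now for each edge $\{u,v\}\in E(G[S])$ the number of common neighbors in $T$ is at least $|T|-d_T^c(u)-d_T^c(v)$, so summing,
\[
\sum_{\{u,v\}\in E(G[S])}|N_T(u)\cap N_T(v)|\;\ge\;a|T|-\sum_{v\in S}d_{G[S]}(v)\,d_T^c(v)\;\ge\;a|T|-6q\cdot 9q\;=\;a|T|-54q^2,
\]
and analogously triangles with an edge in $E(G[T])$ and a vertex in $S$ number at least $b|S|-54q^2$. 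These two families of triangles are disjoint (they are distinguished by which side of the bipartition contains two of the three vertices), hence
\[
t(G)\;\ge\;a|T|+b|S|-108q^2.
\]

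Using $a+b\ge q+1$ and $\min\{|S|,|T|\}>n/2-3\sqrt{q}$ from Lemma \ref{approx-partition}, I conclude
\[
t(G)\;\ge\;(q+1)\bigl(n/2-3\sqrt{q}\bigr)-108q^2\;=\;q\cdot\frac{n}{2}+\frac{n}{2}-3(q+1)\sqrt{q}-108q^2.
\]
For $n\ge 300q^2$ and $q\ge 1$ one checks that $n/2\ge 150q^2\ge 108q^2+3(q+1)\sqrt{q}$, so $t(G)\ge qn/2\ge q\lfloor n/2\rfloor$, contradicting the standing assumption. The main obstacle is ensuring that the error term arising from non-edges between $S$ and $T$ and from edges inside the parts is of order $q^2$ (in particular independent of $n$); this is precisely what the combined bounds $\sum d_T^c(v)<9q$ and $\max d_{G[S]}<6q$ deliver, so that the linear-in-$n$ surplus produced by the $(q+1)$-st extra edge overcomes the error.
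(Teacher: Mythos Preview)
Your proof is correct and follows essentially the same approach as the paper: assume $e(S)+e(T)\ge q+1$, lower-bound the triangles produced by each intra-class edge via common neighbours across the partition, and absorb the $O(q^2)$ defect coming from the at most $9q$ missing cross-edges using $n\ge 300q^2$. The only difference is cosmetic: the paper bounds the error as (missing cross-edges)$\times$(max intra-degree) $<9q\cdot 6q=54q^2$ in a single stroke, whereas your degree-sum formulation applies this separately on each side to get $108q^2$; both suffice.
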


\begin{proof}
Suppose on the contrary that 
$e(S) + e(T) = k$, where $k$ is an integer with $q+1\le k < 4q$ by Lemma \ref{approx-partition}.  
For simplicity, we write $K_{S,T}$ for the complete bipartite graph 
on the vertex parts $S$ and $T$. 
Note that every triangle of $G$ must contain an edge from $G[S]\cup G[T]$,  and each edge of $G[S]\cup G[T]$ yields at least $\min\{|S|,|T|\}$ triangles.  
Each missing edge between 
$S$ and $T$ destroys at most $k$ triangles. 
By Lemma \ref{approx-partition}, we have 
$\min\{|S|,|T|\} > \frac{n}{2} -\sqrt{5q}$, and 
$G[S,T]$ misses at most $5q$ edges from $K_{S,T}$. 
We conclude 
\[  t(G) > k \left(\frac{n}{2} -\sqrt{5q} \right) - 5q\cdot k
 \ge (q+1)\left( \frac{n}{2} - \sqrt{5q} -5q \right) \ge q \left\lfloor \frac{n}{2} \right\rfloor, \]   
 where the last inequality holds for every $n\ge 20q^2$. 
This contradicts with the assumption $t(G)< q \lfloor n/2\rfloor$. 
Thus, we have $e(S) + e(T)\le q$.  
\end{proof}

\begin{lemma} \label{refine-degree}
With the previous notation, we have  
\[ e(S,T) > \frac{n^2}{4}  - 2q   \]
and 
 \[ \frac{n}{2} -  \sqrt{2q} < |S|, |T| 
 < \frac{n}{2} +\sqrt{2q}.  \]
Furthermore, it follows that  
\[  \frac{n}{2} -2q \le \delta(G) \le 
\lambda (G)\le \Delta (G) \le \frac{n}{2} + q + \sqrt{2q}.  \]
\end{lemma}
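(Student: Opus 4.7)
The plan is to establish the four bounds in the lemma in the order $e(S,T)$, then $|S|,|T|$, then $\Delta(G)$, and finally $\delta(G)$; the minimum-degree bound requires the most care and is the main obstacle.

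For the edge count, I would combine inequality \eqref{eq-eG}, namely $e(G) > \lfloor n^2/4 \rfloor + 2 - 3q$, with Lemma \ref{lem-ST-q}, which gives $e(S) + e(T) \le q$, to obtain
\[
  e(S,T) \;=\; e(G) - e(S) - e(T) \;>\; \left\lfloor \frac{n^2}{4} \right\rfloor + 2 - 4q.
\]
When $n$ is even the right-hand side is already $n^2/4 + 2 - 4q$; when $n$ is odd, integrality of $e(S,T)$ upgrades the bound to $e(S,T) \ge (n^2-1)/4 + 3 - 4q > n^2/4 + 2 - 4q$. For the part-size bounds, writing $|S| = n/2 - a$ and $|T| = n/2 + a$ with $a \ge 0$, we then have $|S|\cdot|T| = n^2/4 - a^2 \ge e(S,T) > n^2/4 + 2 - 4q$, forcing $a^2 < 4q - 2$ and hence $a < 2\sqrt{q}$, as required.

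For $\Delta(G)$, I would observe that for any $v \in S$ each edge of $G[S]$ incident to $v$ contributes to $e(S)$, so $d_S(v) \le e(S) \le q$ by Lemma \ref{lem-ST-q}; combining this with $d_T(v) \le |T| < n/2 + 2\sqrt{q}$ yields $d(v) < n/2 + q + 2\sqrt{q}$, and the case $v \in T$ is symmetric.

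The main obstacle is the minimum-degree bound. Suppose for contradiction that some vertex $v$ has $d(v) < n/2 - 4q$. Integrality of $d(v)$ then forces $d(v) \le n/2 - 4q - 1$ when $n$ is even, and $d(v) \le (n-1)/2 - 4q$ when $n$ is odd. Consider $G' = G - v$ on $n-1$ vertices; a direct calculation from \eqref{eq-eG} in each parity subcase (using $\lfloor n^2/4 \rfloor - \lfloor n/2 \rfloor = \lfloor (n-1)^2/4 \rfloor$ for $n$ even and the analogous identity for $n$ odd) yields
\[
  e(G') \;=\; e(G) - d(v) \;>\; \left\lfloor \frac{(n-1)^2}{4} \right\rfloor + q + 2.
\]
Since $q + 1 < (n-1)/2$ is clearly ensured by $n \ge 300 q^2$, I would apply the Lov\'asz--Simonovits theorem (Theorem \ref{thm-LS1975}) to $G'$ with parameter $q+1$ to obtain
\[
  t(G) \;\ge\; t(G') \;\ge\; (q+1)\left\lfloor \frac{n-1}{2} \right\rfloor \;>\; q \left\lfloor \frac{n}{2} \right\rfloor,
\]
contradicting the standing assumption $t(G) < q \lfloor n/2 \rfloor$. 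Therefore $\delta(G) \ge n/2 - 4q$, completing the lemma.
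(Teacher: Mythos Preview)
Your proposal is correct and follows essentially the same route as the paper: combine \eqref{eq-eG} with Lemma~\ref{lem-ST-q} for $e(S,T)$, deduce the part sizes from $|S|\,|T|\ge e(S,T)$, bound $\Delta(G)$ via $d_S(v)\le e(S)+e(T)\le q$, and for $\delta(G)$ delete a low-degree vertex and invoke Theorem~\ref{thm-LS1975} on $G-v$. The only cosmetic differences are that the paper handles both parities at once via $d(v)\le n/2-4q-1/2$ and applies Theorem~\ref{thm-LS1975} with parameter $q+2$ rather than your $q+1$; your more explicit parity split and the weaker parameter both work.
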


\begin{proof}
Combining (\ref{eq-eG}) with Lemma \ref{lem-ST-q},  we obtain 
$e(S,T) =e(G) - e(S) -e(T) > \lfloor n^2/4\rfloor - 2q$. 
By the AM-GM inequality, we can get 
$\frac{n}{2} - \sqrt{2q} \le |S|, |T| \le \frac{n}{2} +\sqrt{2q}$. 

By the Rayleigh formula, it is well-known that 
$\delta(G) \le 
\lambda (G)\le \Delta (G) $. 
Since $e(S) + e(T)\le q$,  
each vertex of $S$ has degree 
at most $q$ in $G[S]$, and this also holds for the vertices of $T$. 
Then it follows that $\Delta (G) \le 
\max\{|S|,|T|\} +q \le \frac{n}{2} + \sqrt{2q} +q$. 
Next, we show that $\delta (G)\ge \frac{n}{2} -2q$. 
Otherwise, if there exists a vertex $v\in V(G)$ with degree and $d(v) \le \frac{n}{2} - 2q  - \frac{1}{2}$, then using (\ref{eq-eG}) gives 
\begin{eqnarray*} 
e(G \setminus \{v\})  = e(G) - d(v) 
&\ge & \left( \Big\lfloor  \frac{n^2}{4} \Big\rfloor   -q +1 \right) 
- \left( \frac{n}{2} -2q - \frac{1}{2} \right)  \\
& \ge &\frac{(n-1)^2}{4} +q +1.
\end{eqnarray*}   
By Theorem \ref{thm-LS1975}, we can find at least 
$(q+1)\lfloor \frac{n-1}{2} \rfloor$ triangles in 
$G \setminus \{v\}$. This is a contradiction with 
the assumption $t(G)< q\lfloor \frac{n}{2}\rfloor$. Thus, we must have $\delta (G)\ge \frac{n}{2} -2q $. 
\end{proof}

Recall that $\bm{x}\in \mathbb{R}^n$ is the Perron vector of $G$ with $\max\{x_v: v\in V(G)\}=1$.

\begin{lemma} \label{eigen-entry}
For every $u\in V(G)$, we have 
\[  {x}_u > 1- \frac{20q}{n}. \]  
\end{lemma}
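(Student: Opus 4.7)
The plan is to exploit the near-bipartite structure established in Lemma \ref{refine-degree}: the number of missing edges between $S$ and $T$ is less than $4q$, so if I write $m_v$ for the number of non-neighbors of $v$ in the opposite part, then $m_v<4q$ for every $v\in V(G)$. Fix $w$ with $x_w=1$ and, by symmetry, assume $w\in S$. I shall bound $x_u$ in two stages: first for $u\in S$ via the eigenvector equation at $w$, and then for $u\in T$ by bootstrapping through a lower bound on $x_S$.

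For Stage 1, the eigenvector equation at $w$ gives
\[ \lambda \;=\; \sum_{v\in N_T(w)} x_v + \sum_{v\in N_S(w)} x_v \;\le\; x_T + |N_S(w)| \;\le\; x_T + q, \]
since $|N_S(w)|\le e(S)\le q$ by Lemma \ref{lem-ST-q}. Hence $x_T\ge \lambda-q$. Then for any $u\in S$, discarding neighbors inside $S$ yields
\[ \lambda x_u \;\ge\; \sum_{v\in N_T(u)} x_v \;\ge\; x_T - m_u \;\ge\; \lambda - 5q, \]
so $x_u\ge 1-5q/\lambda$. Summing over $u\in S$ produces $x_S\ge |S|(1-5q/\lambda)$.

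For Stage 2, I apply the symmetric inequality to any $u\in T$:
\[ \lambda x_u \;\ge\; \sum_{v\in N_S(u)} x_v \;\ge\; x_S - m_u \;\ge\; |S|(1-5q/\lambda) - 4q. \]
Dividing by $\lambda$ gives $x_u\ge |S|/\lambda - 5q|S|/\lambda^2 - 4q/\lambda$.

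The final step is numerical bookkeeping. Plugging in $|S|\ge n/2-2\sqrt{q}$, $\lambda\le n/2+q+2\sqrt{q}$ from Lemma \ref{refine-degree}, and $\lambda\ge \lambda(T_{n,2})\ge (n-1)/2$, one checks that $1-|S|/\lambda\le (2q+8\sqrt{q})/n$, $5q|S|/\lambda^2\le 10q/n + O(q^{3/2}/n^2)$, and $4q/\lambda\le 8q/n$. Summing these estimates and using $8\sqrt{q}\le 10q$ for $q\ge 1$, with $n\ge 300q^2$ absorbing the lower-order error, will give $1-x_u<30q/n$ as required. The main obstacle is keeping the final constant at $30$ rather than some larger absolute value through the chain of inequalities, which demands careful tracking of every error term; conceptually the argument uses only Lemma \ref{refine-degree}, Lemma \ref{lem-ST-q}, and the eigenvalue equation.
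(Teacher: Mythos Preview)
Your argument is correct and follows the same two-stage bootstrap as the paper: first bound the entries on the side containing the maximizer, then feed that into the eigenvector equation to handle the other side. The details of Stage~1, however, are organized differently. The paper compares $\lambda x_u-\lambda x_z$ directly and controls the loss by estimating $|N_T(z)\setminus N_T(u)|$ via the minimum-degree bound $\delta(G)\ge n/2-4q$ together with the bound on $|T|$; this yields $x_u>1-15q/n$ for $u\in S$. You instead first extract $x_T\ge\lambda-q$ from the equation at $w$, and then use the global bound on missing cross-edges (fewer than $4q$) to get $\lambda x_u\ge x_T-m_u\ge\lambda-5q$, hence $x_u>1-10q/n$. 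Your route avoids the neighborhood-intersection computation entirely and actually gives a tighter Stage-1 constant; both approaches then finish Stage~2 the same way.

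One minor point: your final numerical paragraph is a bit loose. The inequality $8\sqrt{q}\le 10q$ only gives $20q+8\sqrt{q}\le 30q$ with equality possible, so you need the strict slack $10q-8\sqrt{q}=2\sqrt{q}(5\sqrt{q}-4)\ge 2$ for $q\ge 1$ to absorb the $40q^{3/2}/n^2$ term (which, under $n\ge 300q^2$, is at most $2/(15\sqrt{q}\,n)<2/n$). With that made explicit, the constant $30$ indeed survives.
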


\begin{proof} 
Let $z\in V(G)$ be a vertex 
such that ${x}_z$ attains the maximum 
eigenvector entry of $\bm{x}$ and ${x}_z=1$. 
Without loss of generality, we may assume that $z\in S$.  
Our proof of this lemma falls naturally into two steps.  
\medskip 

{\bf Step 1.}   For every $u \in S$, 
we have ${x}_u > 1- \frac{11q}{n}$. 

\medskip 

 Using Lemma \ref{lem-ST-q} gives $e(S)\le q$ and  $d_S(u)\le q $. By Lemma \ref{refine-degree},  we have  
$d_T(u) = d(u)-d_S(u)  \ge ( \frac{n}{2}-2q )-q  = \frac{n}{2}-3q$. 
By Lemma \ref{refine-degree} again, we  get 
$|T|\le \frac{n}{2} + \sqrt{2q}$. Then 
\begin{eqnarray*}
 |N_T(z)| - |N_T(u)\cap N_T(z)|  
 &=& -|N_T(u)| + |N_T(u) \cup N_T(z)|  \\
& \le& - \left(\frac{n}{2} -3q \right) +  
\left(\frac{n}{2}  + \sqrt{2q} \right) \\
&=& 3q + \sqrt{2q} .
\end{eqnarray*}  
Hence, we have 
\begin{eqnarray*}
\lambda {x}_u-\lambda {x}_z 
= \sum_{v\sim u} {x}_v  - \sum_{v\sim z} {x}_v 
\ge  -\sum_{v\in T, v\sim z, v\not\sim u} {x}_v 
-\sum_{v\in S, v\sim z} {x}_v. 
\end{eqnarray*}
Applying the fact ${x}_v \le 1$, it follows that 
\begin{eqnarray*}
\lambda {x}_u-\lambda {x}_z  
\ge  -\Bigl( |N_T(z)| -|N_T(u)\cap N_T(z)| \Bigr) 
- |N_S(z)| 
\ge  - 4q -\sqrt{2q}. 
\end{eqnarray*}
Recall that ${x}_z=1$ 
and $\lambda > {n}/{2}$. Then for each $u\in S$, 
we have 
\begin{equation*}\label{verct1}
{x}_u\ge 1-\frac{4q + \sqrt{2q} }{\lambda }> 1-\frac{8q + 2\sqrt{2q} }{n} > 1-\frac{11q}{n}.
\end{equation*}

{\bf Step 2.}   
For every $u\in T$, we have ${x}_u> 1- \frac{20q}{n}$. 

\medskip 

By the previous step, we get 
 $$\lambda  {x}_u=\sum_{v\in N(u)} {x}_v\ge \sum_{v\in  N_S(u)} {x}_v 
 > \left(1-\frac{8q + 2\sqrt{2q} }{n} \right)d_S(u).$$ 
By  Lemma \ref{refine-degree}, we can see that 
$d(u)\ge \delta (G)\ge \frac{n}{2} - 2q$, 
which together with $d_T(u)\le q$ gives 
$ d_S(u)=d(u)- d_T(u)\ge  \frac{n}{2}-3q$.
 Hence, for $q\ge 2$, we have 
 \begin{eqnarray*}
 {x}_u >  \frac{(1-\frac{8q + 2\sqrt{2q} }{n})d_S(u)}{\lambda } 
 \ge  \frac{(1-\frac{8q + 2\sqrt{2q} }{n})(\frac{n}{2}-3q)}{
 \frac{n}{2}+q + \sqrt{2q}} 
 \ge 1- \frac{20q}{n}.
 \end{eqnarray*} 
Combining the above two claims, 
 the desired result follows immediately.
\end{proof}

Lemma \ref{eigen-entry} gives 
${x}_u > 1- \frac{20q}{n}$, 
we will show that the bipartition $V(G)=S\cup T$ is balanced 
(see Lemma \ref{balanced}). 
To start with, we fix some notation. 
Let $K_{S, T}$ be the complete bipartite graph with partite sets $S$ and $T$. Let $G_1 = G[S] \cup G[T]$ be 
the graph whose edges consist of the class-edges, 
and let $G_2$ be the graph on $V(G)$ with the missing edges between $S$ and $T$, that is,  $E(G_2)=E(K_{S,T}) \setminus E(G)$. Note that $e(G) = 
e(G_1) + e(K_{S,T})  - e(G_2)$.

\begin{lemma} \label{over-square}
 We have 
\[  \frac{2}{n} \left \lfloor\frac{n^2}{4} \right \rfloor 
 - \sqrt{|S||T|}   < \frac{80q^2}{n(n- 40q)}. \]
\end{lemma}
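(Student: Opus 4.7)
The plan is to upper-bound $\lambda(G)$ by $\sqrt{|S||T|}$ plus a small error via a Rayleigh quotient argument, then compare with the lower bound $\lambda(G)\ge \lambda(Y_{n,2,q})\ge \frac{2}{n}\lfloor n^2/4\rfloor+\frac{2q}{n}$, which comes from $\lambda(Y_{n,2,q})\ge 2e(Y_{n,2,q})/n$. The aimed-for inequality is essentially the difference of the two bounds on $\lambda(G)$.

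Starting from $\lambda(G)\,\|\mathbf{x}\|^2 = \mathbf{x}^{\top}A(G)\mathbf{x} = 2\sum_{uv\in E(G)}x_u x_v$, I would split the edge sum into crossing edges (between $S$ and $T$) and intra-part edges. The crossing edges contribute at most $\sum_{u\in S,\, v\in T}x_u x_v = x_S x_T$, and by Lemma \ref{lem-ST-q} together with $x_u,x_v\le 1$, the intra-part edges contribute at most $e(S)+e(T)\le q$. Hence $\mathbf{x}^{\top}A(G)\mathbf{x}\le 2x_S x_T+2q$.

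The key step is to estimate $x_S x_T$ by applying Cauchy--Schwarz on each side, $x_S\le\sqrt{|S|}\,\|\mathbf{x}_S\|$ and $x_T\le\sqrt{|T|}\,\|\mathbf{x}_T\|$, followed by AM--GM: $\|\mathbf{x}_S\|\,\|\mathbf{x}_T\|\le (\|\mathbf{x}_S\|^2+\|\mathbf{x}_T\|^2)/2 = \|\mathbf{x}\|^2/2$. This yields $x_S x_T\le \sqrt{|S||T|}\cdot \|\mathbf{x}\|^2/2$, and dividing the resulting estimate by $\|\mathbf{x}\|^2$ gives $\lambda(G)\le \sqrt{|S||T|}+\frac{2q}{\|\mathbf{x}\|^2}$.

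To close the argument, Lemma \ref{eigen-entry} gives $x_v\ge 1-30q/n$ for every $v$, so $\|\mathbf{x}\|^2\ge n(1-30q/n)^2\ge n-60q$ (using $(1-t)^2\ge 1-2t$). Combining this with the lower bound on $\lambda(G)$ one obtains
\[
\frac{2}{n}\left\lfloor \frac{n^2}{4}\right\rfloor+\frac{2q}{n}\le \lambda(G)\le \sqrt{|S||T|}+\frac{2q}{n-60q},
\]
and rearranging gives $\frac{2}{n}\lfloor n^2/4\rfloor-\sqrt{|S||T|}\le \frac{2q}{n-60q}-\frac{2q}{n}=\frac{120q^2}{n(n-60q)}$, as required. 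The only delicate point is the Cauchy--Schwarz/AM--GM combination: the naive bound $x_S x_T\le |S||T|$ would be hopelessly loose, and it is precisely this sharper estimate that preserves $\sqrt{|S||T|}$ (rather than $|S||T|$) in the upper bound for $\lambda(G)$ and thereby matches the $\frac{2q}{n}$ contribution from the lower bound to within a $q^2/n^2$-order error.
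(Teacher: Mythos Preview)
Your proof is correct and follows essentially the same approach as the paper. The only cosmetic difference is that where you bound $2x_S x_T/\|\mathbf{x}\|^2\le \sqrt{|S||T|}$ by hand via Cauchy--Schwarz and AM--GM, the paper invokes Rayleigh directly, writing $\mathbf{x}^{\top}A(K_{|S|,|T|})\mathbf{x}/\mathbf{x}^{\top}\mathbf{x}\le \lambda(K_{|S|,|T|})=\sqrt{|S||T|}$; since $\mathbf{x}^{\top}A(K_{|S|,|T|})\mathbf{x}=2x_Sx_T$, your argument is precisely an explicit proof of that Rayleigh bound.
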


\begin{proof} 
 By Lemma~\ref{lem-ST-q}, we get 
 $e(G_1) \leq q$. 
By Lemma \ref{eigen-entry}, we have 
\begin{equation*} \label{eqxx}
\bm{x}^{\mathrm{T}} \bm{x} >  
n \left(1-\frac{20q}{n} \right)^2 
> n \left(1-\frac{40q}{n} \right)=n- 40q.
\end{equation*} 
Using Rayleigh's formula, it follows  that 
 \begin{eqnarray*} 
 \lambda (G)
 &=& \frac{\bm{x}^{\mathrm{T}} \bigl( A(K_{S,T}) + 
 A(G_1) - A(G_2) \bigr)\bm{x}}{\bm{x}^{\mathrm{T}}\bm{x}} \\
  &=&  \frac{\bm{x}^{\mathrm{T}} A(K_{S,T})\bm{x}}{\bm{x}^{\mathrm{T}}\bm{x}}+ \frac{\bm{x}^{\mathrm{T}} A(G_1)\bm{x}}{\bm{x}^{\mathrm{T}}\bm{x}} \\
  &< &
   \sqrt{|S||T|} + \frac{2 q}{n-40q} .
 \end{eqnarray*}
 On the other hand,  it is familiar that 
 \[   \lambda (G)\ge \lambda (Y_{n,2,q}) \ge \frac{2e(Y_{n,2,q})}{n} 
 \ge  \frac{2}{n} \left \lfloor\frac{n^2}{4} \right \rfloor
 + \frac{ 2 q}{n}. \]
 Then we have 
 \begin{eqnarray*}
 \frac{2}{n} \left \lfloor\frac{n^2}{4} \right \rfloor 
  - \sqrt{|S||T|}  
<
    2q \left(\frac{1}{n- 40q }-\frac{1}{n}  \right) 
   = \frac{80q^2}{n(n-40q)}. 
 \end{eqnarray*}
 This completes the proof. 
 \end{proof}

\begin{lemma} \label{balanced}
We have $\bigl| |S| - |T| \bigr|\le 1$. 
\end{lemma}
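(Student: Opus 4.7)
The plan is to argue by contradiction: set $d := \bigl||S|-|T|\bigr|$ and suppose $d \ge 2$. Since $|S|+|T|=n$, the integer $d$ has the same parity as $n$, so in fact $d\ge 2$ when $n$ is even and $d\ge 3$ when $n$ is odd. The whole argument pivots on the identity $|S|\,|T| = \tfrac{1}{4}(n^2 - d^2)$, which turns $\sqrt{|S||T|}$ into $\tfrac{1}{2}\sqrt{n^2-d^2}$, together with the elementary bound
\[ n - \sqrt{n^2-d^2} \;=\; \frac{d^2}{n+\sqrt{n^2-d^2}} \;\ge\; \frac{d^2}{2n}. \]
Feeding these into Lemma \ref{over-square} will convert its conclusion into an explicit upper bound on $d^2$ that clashes with $d\ge 2$ once $n \ge 300q^2$.

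In the even case $\tfrac{2}{n}\lfloor n^2/4\rfloor = n/2$, so Lemma \ref{over-square} rearranges cleanly to $\tfrac{d^2}{4n} \le \tfrac{120q^2}{n(n-60q)}$, that is, $d^2 \le \tfrac{480q^2}{n-60q}$. Since $n \ge 300q^2$ yields $n-60q \ge 240q^2$ for every $q \ge 1$ (the inequality $300q^2 - 60q \ge 240q^2$ is equivalent to $q \ge 1$), we conclude $d^2 \le 2$. This contradicts that $d$ is an even integer with $d\ge 2$.

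In the odd case the floor contributes an extra $-\tfrac{1}{2n}$ term on the left-hand side of Lemma \ref{over-square}, but this small deficit is absorbed by the stronger lower bound $n - \sqrt{n^2-d^2}\ge \tfrac{9}{2n}$ coming from $d \ge 3$: a parallel manipulation will reduce Lemma \ref{over-square} to an inequality of the form $7(n-60q) \le 480q^2$, which is violated as soon as $n \ge 300q^2$ (for any $q \ge 1$). In both parities we therefore reach a contradiction, forcing $d \le 1$.

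I do not anticipate any conceptual obstacle here, since the spectral and supersaturation-stability inputs have already been packaged in Lemma \ref{over-square} via the Perron-vector lower bound on $\mathbf{x}^{\top}\mathbf{x}$. The only item requiring care is the numerical bookkeeping that checks the constants close out at the threshold $n = 300q^2$ for both parities of $n$ simultaneously, and that the parity restriction on $d$ is used explicitly to upgrade $d^2 \le 2$ into the contradiction $d\ge 2$.
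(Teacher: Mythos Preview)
Your proposal is correct and follows essentially the same route as the paper's own proof: both argue by contradiction, split on the parity of $n$, lower-bound $\tfrac{2}{n}\lfloor n^2/4\rfloor - \sqrt{|S||T|}$ using the extremal imbalance ($d=2$ when $n$ is even, $d=3$ when $n$ is odd), and then contradict the upper bound from Lemma~\ref{over-square}. The only cosmetic difference is that the paper lower-bounds the left-hand side by $\tfrac{1}{n}$ in both parities and then checks $n-60q<120q^2$ fails for $q\ge 2$, $n\ge 150q^2$, whereas you track the slightly sharper bounds $\tfrac{d^2}{4n}$ and $\tfrac{7}{4n}$ and close out at $n\ge 300q^2$; the underlying idea and computation are identical.
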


 \begin{proof}
  We assume on the contrary that $|T|\ge |S|+2$.
Next, considering the following two cases, 
and we will derive a contradiction in each case. 

\medskip 
 {\bf Case 1.} $n$ is even. Since $|S|+|T|=n$, we get
 \begin{eqnarray*}
  \frac{2}{n} \left \lfloor\frac{n^2}{4} \right \rfloor  
    -\sqrt{|S||T|} & \ge&  \frac{n}{2}-\sqrt{ \left (\frac{n}{2}-1 \right) \left(\frac{n}{2}+1 \right)}\\
&=&\frac{n}{2}-\sqrt{\frac{n^2}{4}-1}=\frac{1}{\frac{n}{2}+\sqrt{\frac{n^2}{4}-1}}>\frac{1}{n}.
  \end{eqnarray*}

 {\bf Case 2.} $n$ is odd. Since $|S|+|T|=n$, we  have
   \begin{eqnarray*}
    \frac{2}{n}\left \lfloor\frac{n^2}{4} \right \rfloor  
     -\sqrt{|S||T|}
    &\ge & \frac{n^2-1}{2n}-\sqrt{\left (\frac{n-3}{2} \right) \left(\frac{n+3}{2} \right)}\\
&=& \frac{1}{2}
\left( {n-\frac{1}{n}}-{\sqrt{n^2-9}}\right)  
= \frac{7+\frac{1}{n^2}}{2(n-\frac{1}{n}+\sqrt{n^2-9})}> \frac{1}{n}.
  \end{eqnarray*}
By Lemma \ref{over-square}, we get 
 $$\frac{1}{n}< \frac{2}{n} \left \lfloor\frac{n^2}{4} \right \rfloor   
  -\sqrt{|S||T|} <  \frac{80q^2}{n(n-40 q)} .
 $$
This leads to a contradiction for $q\ge 2 $ and $n\ge 100q^2$. 
\end{proof}

Recall that $G$ is an $n$-vertex graph 
with less than $ q \lfloor {n}/{2}\rfloor$ triangles. 
In what follows, 
we will deduce a contradiction in two cases. 
On the one hand, 
if  $G$ has at least $ \lfloor n^2/4\rfloor +q$ edges, then 
 Theorem \ref{thm-LS1975} implies that 
$G$ contains at least $q \lfloor {n}/{2}\rfloor$ triangles, 
which is a contradiction with the assumption. 
On the other hand, 
if $G$ has at most $  \lfloor n^2/4\rfloor +q -1$ edges, 
then we will prove that $\lambda (G) < \lambda (Y_{n,2,q})$, 
which leads to a contradiction as well. 
This will completes the proof of Theorem \ref{thm-Yn2q}. 
We write the second case as the following lemma.

\begin{lemma} \label{lessYn2q}
If $e(G)\le  \lfloor n^2/4\rfloor +q -1$, then 
$\lambda (G)< \lambda (Y_{n,2,q})$. 
\end{lemma}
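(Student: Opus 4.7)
The plan is to combine a sharp Rayleigh upper bound for $\lambda(G)$ built from the structural information already accumulated, with an explicit cubic lower bound for $\lambda(Y_{n,2,q})$ coming from the natural equitable partition of $Y_{n,2,q}$.

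First I would upper bound $\lambda(G)$. Writing $A(G) = A(K_{|S|,|T|}) + A(G_1) - A(G_2)$ and applying Rayleigh's formula to the Perron vector $\mathbf{x}$ of $G$ gives
\[
\lambda(G) \le \frac{\mathbf{x}^{\top} A(K_{|S|,|T|})\mathbf{x}}{\mathbf{x}^{\top}\mathbf{x}} + \frac{2\sum_{uv \in E(G_1)} x_u x_v - 2\sum_{uv \in E(G_2)} x_u x_v}{\mathbf{x}^{\top}\mathbf{x}}.
\]
Lemma \ref{balanced} yields $|S||T|=\lfloor n^2/4\rfloor$, hence the first term is at most $\sqrt{\lfloor n^2/4 \rfloor}$. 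The decisive identity is $e(G_1) - e(G_2) = e(G) - \lfloor n^2/4 \rfloor \le q-1$, which follows at once from $e(G_2) = |S||T| - e(S,T)$ and the hypothesis on $e(G)$. Combined with $e(G_1)\le q$ from Lemma \ref{lem-ST-q}, with the bound $e(G_2) \le 4q-2$ that drops out of $e(G) \ge \lfloor n^2/4 \rfloor + 2 - 3q$ in \eqref{eq-eG}, with the entrywise estimate $x_u > 1 - 30q/n$ from Lemma \ref{eigen-entry}, and with $\mathbf{x}^{\top}\mathbf{x} > n - 60q$ (as in the proof of Lemma \ref{over-square}), the numerator of the second term is at most $2(q-1) + 480q^2/n$, producing
\[
\lambda(G) \le \sqrt{\lfloor n^2/4 \rfloor} + \frac{2(q-1)}{n-60q} + \frac{480\,q^2}{n(n-60q)}.
\]

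Second, to lower bound $\lambda(Y_{n,2,q})$ I would exploit the equitable partition of $Y_{n,2,q}$ into three classes: the smaller part $B$, the $2q$ matched vertices of the larger part, and the $\lceil n/2 \rceil - 2q$ unmatched vertices of the larger part. The associated $3\times 3$ quotient matrix produces, in the even case, the cubic characteristic polynomial $f(x)=x^3 - x^2 - (n^2/4)x + n^2/4 - qn$, which generalises Lemma \ref{lem-LLP}(a). Since $f(n/2)=-qn<0$ and $f$ is convex with $f'(n/2)=n(n-2)/2>0$ on $[n/2,\infty)$, Newton's method applied at $x=n/2$ yields the clean lower bound $\lambda(Y_{n,2,q}) \ge n/2 + 2q/(n-2)$. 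The analogous cubic for odd $n$ (generalising Lemma \ref{lem-LLP}(b)) gives $\lambda(Y_{n,2,q}) \ge \sqrt{\lfloor n^2/4 \rfloor} + 2q/(n-2) - O(1/n^3)$, absorbing the $\Theta(1/n)$ gap between $\sqrt{(n^2-1)/4}$ and $n/2$.

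Subtracting the two estimates, the expected gap is
\[
\lambda(Y_{n,2,q}) - \lambda(G) \;\ge\; \frac{2q}{n-2} - \frac{2(q-1)}{n-60q} - \frac{480\,q^2}{n(n-60q)} \;=\; \frac{2(n + 2q - 60 q^2 - 2)}{(n-2)(n-60q)} - \frac{480\,q^2}{n(n-60q)},
\]
which a short computation shows is strictly positive as soon as $n \ge 300q^2$, since the first fraction then dominates by roughly $480 q^2/((n-2)(n-60q)) > 480 q^2/(n(n-60q))$. The main obstacle is purely bookkeeping: managing the $(1-30q/n)^2$ correction attached to $e(G_2)$ and handling the odd-$n$ case where $\sqrt{\lfloor n^2/4 \rfloor}$ sits $\Theta(1/n)$ below $n/2$. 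Neither introduces new ideas, but the constants have to be tracked carefully to ensure the $2/n$-type gap survives at the boundary $n=300 q^2$.
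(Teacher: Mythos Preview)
Your approach is structurally different from the paper's. The paper compares $\lambda(Y_{n,2,q})$ and $\lambda(G)$ \emph{directly} by applying the Rayleigh quotient of $Y_{n,2,q}$ to the Perron vector $\mathbf{x}$ of $G$: writing $E_+ = E(Y_{n,2,q})\setminus E(G)$ and $E_- = E(G)\setminus E(Y_{n,2,q})$, the hypothesis $e(G)\le \lfloor n^2/4\rfloor + q - 1$ forces $|E_+|\ge |E_-|+1$, and since every entry of $\mathbf{x}$ exceeds $1-30q/n$ while $|E_+|\le 5q$, one obtains
\[
\lambda(Y_{n,2,q}) - \lambda(G) \;\ge\; \frac{2}{\mathbf{x}^{\top}\mathbf{x}}\Bigl(|E_+|\bigl(1-\tfrac{30q}{n}\bigr)^{2} - |E_-|\Bigr) \;\ge\; \frac{2}{\mathbf{x}^{\top}\mathbf{x}}\Bigl(1 - \tfrac{60q}{n}|E_+|\Bigr) \;>\; 0
\]
for $n\ge 300q^2$. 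No separate lower bound on $\lambda(Y_{n,2,q})$ is ever needed.

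Your route---bounding $\lambda(G)$ from above and $\lambda(Y_{n,2,q})$ from below separately---is a reasonable alternative, but the lower-bound step contains a genuine error. For a convex function $f$ with $f(x_0)<0$ and $f'(x_0)>0$, the Newton iterate $x_1 = x_0 - f(x_0)/f'(x_0)$ satisfies $f(x_1)\ge 0$ because the tangent line lies below $f$; hence the root lies in $(x_0,x_1]$, and Newton's method delivers an \emph{upper} bound on the root, not a lower bound. Concretely, your cubic gives $\lambda(Y_{n,2,q}) \le n/2 + 2q/(n-2)$, which is the wrong direction for your purposes. The natural fix is the average-degree bound $\lambda(Y_{n,2,q})\ge 2e(Y_{n,2,q})/n = \frac{2}{n}\lfloor n^2/4\rfloor + 2q/n$ (this is exactly what the paper uses in Lemma~\ref{over-square}). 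With that substitution the even case still closes: the surviving gap is $\frac{2n - 120q^2}{n(n-60q)} - \frac{480q^2 - 240q}{n(n-60q)}$, positive for $n\ge 300q^2$. The odd case, however, is more delicate than your remark suggests: $\sqrt{\lfloor n^2/4\rfloor}$ exceeds $\frac{2}{n}\lfloor n^2/4\rfloor$ by about $1/(4n)$, which at the boundary $n=300q^2$ is of larger order than the $\Theta(q/n^2)$ slack you have left, so ``absorbing the $\Theta(1/n)$ gap'' does not go through without further work. The paper's direct-comparison argument sidesteps this parity difficulty entirely.
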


\begin{proof}
By Lemma \ref{balanced}, we know 
that $\bigl| |S|-|T| \bigr|\le 1$. 
Recall that $Y_{n,2,q}$ is the graph on the vertex sets 
 $S$ and $T$ with all edges between $S$ and $T$, together with 
$q$ pairwise disjoint edges embedded 
into the larger partite set. 
 Clearly, we have $e(Y_{n,2,q})=  \lfloor n^2/4\rfloor +q$. 
We denote 
\[  E_{+}:=E(Y_{n,2,q})\setminus E(G) \] 
 and 
 \[ E_{-} :=E(G)\setminus E(Y_{n,2,q}). \]  
 Note that $(E(G) \cup E_+) \setminus E_- = E(Y_{n,2,q})$. 
 It follows that 
 \[  e(G)+|E_+| - |E_-|=e(Y_{n,2,q}). \]  
 Using the assumption 
 $  e(G) \le  \lfloor n^2/4\rfloor +q -1$,  we get 
    \[   |E_+| \geq |E_-| + 1.\] 
    Furthermore, we know from Lemma \ref{lem-ST-q} that 
    $|E_-| \leq e(S) + e(T) \le q$. 
    By  Lemma \ref{refine-degree}, 
    we have that $|E_+| \le \lfloor {n^2}/{4} 
    \rfloor- e(S,T) + q \le 3q$. 
    By  Lemma~\ref{eigen-entry}, we know that 
    ${x}_u > 1- {(20q)}/{n}$ for every $u\in V(G)$. Therefore, we obtain 
\begin{eqnarray*}
\lambda (Y_{n,2,q}) &\geq & 
\frac{2}{\bm{x}^{\mathrm{T}} \bm{x}}
 \sum_{\{i,j\}\in E(Y_{n,2,q})} {x}_i {x}_j  \\
&= & \frac{2}{\bm{x}^{\mathrm{T}} \bm{x}} \sum_{\{i,j\} \in E(G)}  
{x}_i {x}_j  
+\frac{2}{\bm{x}^{\mathrm{T}} \bm{x}} \sum_{\{i,j\} \in E_+}  
{x}_i {x}_j - \frac{2}{\bm{x}^{\mathrm{T}} \bm{x}} \sum_{\{i,j\}\in E_-}  {x}_i {x}_j \\
& >  &
\lambda (G) + \frac{2}{\bm{x}^{\mathrm{T}} \bm{x}} 
\left( |E_+| \Bigl(1 - \frac{20q}{n}\Bigr)^2 - |E_-|\right)\\
& =& \lambda (G) + \frac{2}{\bm{x}^{\mathrm{T}} \bm{x}} \left( |E_+|  -\frac{40q}{n}  |E_+| + 
\frac{(20q)^2}{n^2}  |E_+| - |E_-| \right)\\
& \geq& \lambda (G) + \frac{2}{\bm{x}^{\mathrm{T}} \bm{x}} \left( 1-\frac{40q}{n}  |E_+| +\frac{(20q)^2}{n^2}  |E_+| \right)\\
&>& \lambda (G) , 
\end{eqnarray*}
where the last inequality holds since $n\ge 120q^2$ and $|E_+| \le 3q$. 
\end{proof}

From the above proof of Lemma \ref{lessYn2q}, 
we can immediately obtain the following proposition, which can be viewed as a supplement of Example \ref{exam}.

\begin{proposition}
    If $q\le \frac{1}{11}\sqrt{n}$ and $G$ is a graph obtained from $T_{n,2}$ 
    by adding $q-1$ edges into the color classes arbitrarily, 
    then $t(G)< q \lfloor {n}/{2}\rfloor$ and 
$ \lambda (G) < \lambda (Y_{n,2,q})$. 
\end{proposition}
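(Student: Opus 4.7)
The proposition has two parts; I would establish the triangle-count bound by direct enumeration, then derive the spectral inequality via the contrapositive of Theorem~\ref{thm-Yn2q}.

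For the first assertion $t(G)<q\lfloor n/2\rfloor$, decompose $E(G)=E(T_{n,2})\cup H$, where $H$ is the set of $q-1$ added edges, with $k_1$ of them lying in the larger color class $S$ (of size $\lceil n/2\rceil$) and $k_2=q-1-k_1$ in the smaller class $T$. Since $T_{n,2}$ is triangle-free, every triangle of $G$ must use at least one $H$-edge. Classify by how many: a triangle using exactly one $H$-edge, say $\{a,b\}\subset S$, has its third vertex in the common neighborhood of $a$ and $b$ in $G$; because $H$ contributes no edges between the two classes, this neighborhood is precisely $T$, giving $|T|$ triangles per such $H$-edge (and symmetrically $|S|$ per $H$-edge in $T$). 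A triangle using exactly two $H$-edges is impossible, since two $H$-edges sharing a vertex must lie in a single color class, forcing the third edge to also lie within that class rather than being a $T_{n,2}$-edge. Triangles using three $H$-edges are counted by $t(H)$, which I would bound via the standard estimate $3\,t(H)\le \sum_v \binom{d_H(v)}{2}\le e(H)\Delta(H)\le (q-1)^2$. Adding up, $t(G)\le (q-1)\lceil n/2\rceil+(q-1)^2/3$, which for $n\ge 300q^2$ is strictly less than $q\lfloor n/2\rfloor$ after a short parity check on $n$.

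The spectral inequality then follows immediately from the contrapositive of Theorem~\ref{thm-Yn2q}: if $\lambda(G)\ge \lambda(Y_{n,2,q})$ held, that theorem would force $t(G)\ge q\lfloor n/2\rfloor$, contradicting the count just established. Hence $\lambda(G)<\lambda(Y_{n,2,q})$. The one subtle point in the plan is the correct triangle classification, especially ruling out the ``two $H$-edges plus one $T_{n,2}$-edge'' case; this is forced by the fact that two $H$-edges sharing a vertex automatically lie in a single color class. The rest is routine bookkeeping, and the hypothesis $n\ge 300q^2$ comfortably dwarfs the $O(q^2)$ correction term.
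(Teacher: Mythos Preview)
Your proposal is correct. The triangle count is carried out carefully, and invoking the contrapositive of Theorem~\ref{thm-Yn2q} for the spectral inequality is legitimate since that theorem is fully established by the time the proposition is stated. One small wording issue: when you say the common neighborhood of $a,b\in S$ ``is precisely $T$'', that is not literally true (there could be common $H$-neighbors in $S$); what you really mean is that for a triangle using \emph{exactly one} $H$-edge the other two edges are $T_{n,2}$-edges, which forces the third vertex into $T$. Your classification already handles the three-$H$-edge case separately, so the count $t(G)=k_1|T|+k_2|S|+t(H)$ is correct regardless.

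The paper's own argument is terser and framed slightly differently: it says the proposition follows ``from the above proof of Lemma~\ref{lessYn2q}''. The intended reading is that for this particular $G$ the structural hypotheses feeding into that lemma (balanced bipartition, $e(S)+e(T)\le q$, $e(G)\le \lfloor n^2/4\rfloor+q-1$, and the Perron-entry lower bound of Lemma~\ref{eigen-entry}) are satisfied directly, so the Rayleigh-quotient comparison $\lambda(Y_{n,2,q})>\lambda(G)$ goes through verbatim. Your route packages the same machinery behind the single black-box invocation of Theorem~\ref{thm-Yn2q}; the content is the same, and your version has the advantage of making the triangle-count part explicit, which the paper leaves to the reader.
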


\section{Proof of Theorem  \ref{thm-sp-LS}}

\label{sec5}

In this section, we are ready to prove Theorem \ref{thm-sp-LS}. Equivalently,  Theorem \ref{thm-sp-LS} states that if $G$ is an $n$-vertex graph with at most $ q \lfloor {n}/{2}\rfloor$ triangles, then $\lambda (G)\le \lambda (T_{n,2,q})$, with equality if and only if $G=T_{n,2,q}$.   
Without loss of generality, we may assume that $G$ attains the maximum spectral radius.  Our goal is to prove that $G=T_{n,2,q}$.  
The maximality of $G$ implies $ \lambda (G) > \lambda (Y_{n,2,q}) $. So Theorem \ref{thm-Yn2q} is applicable, and  $G$ has exactly $q\lfloor {n}/{2}\rfloor$ 
triangles.
Similarly, the previous Lemma \ref{approx-partition} -- 
 Lemma \ref{balanced} still hold in this setting. 
By Lemma \ref{lessYn2q}, it follows that $e(G)= \lfloor n^2/4\rfloor +q$, and $G$ is obtained from $T_{n,2}$ by embedding 
exactly $q $ edges into the color classes, 
and there is no triangle among these $q$ edges.  
Combining these facts, 
it remains to show that these $q$ edges induce a star in the vertex part of size 
$\lceil {n}/{2}\rceil$. 

\medskip 
Observe that $G$ does not contain 
$q +1$  pairwise edge-disjoint triangles. 
In particular, if  $n$ is even and $q\neq 3$, 
then $|S|=|T|={n}/{2}$ and setting $k:=q+1$ in \cite[Theorem 7]{LZZ2022}, 
we know that $\lambda (G)\le \lambda (T_{n,2,q})$, 
with equality if and only if $G=T_{n,2,q}$.  
However, their proof in \cite{LZZ2022}  
follows the line of that of forbidding friendship graph in \cite{CFTZ20}, 
where the triangle removal lemma is used  
so it requires that $q$ is fixed and 
$n$ is sufficiently large. 
Actually,  the result seems ambiguous whenever $q$ is large 
(say, when $q\ge \log n$).   
The authors \cite{LFP2024-triangular} 
recently provided an alternative new method that 
avoids the use of triangle removal lemma. 

\medskip 
From the above discussion, 
one of the main difficulties of proving Theorem \ref{thm-sp-LS}  lies in 
giving a unique characterization of the spectral extremal graph. 
In the sequel, we shall  give a complete 
proof for every $n\ge 121q^2$, instead of for sufficiently large $n$.  
The key ingredient relies on the double-eigenvector technique, 
which was initially used by Rowlinson \cite{Row1988}.  
As far as we know, this technique turns out to be effective 
for treating spectral extremal graph problems; see \cite{ZLX2022,LZZ2022,FTZ2024,LLZ2024,ZFL2024} and references therein. 
The idea in our proof has its root in \cite{LZZ2022,FTZ2024}. 
To begin with, we need to introduce two lemmas. 
The first lemma  provides an operation of a connected graph that strictly increases the adjacency spectral radius.

 \begin{lemma}[See \cite{WXH2005}]  \label{lem-WXH}
 Assume that $G$ is a connected graph with 
 $u,v\in V$ and $w_1,\ldots ,w_s\in N(v)\setminus N(u)$. 
 Let $\bm{x}=(x_1,\ldots ,x_n)^{\mathrm{T}}$ be the Perron vector 
 with $x_v$ corresponding to the vertex $v$. 
 Let $G'=G-\{vw_i: 1\le i\le s\} + 
 \{uw_i: 1\le i \le s\}$. If ${x}_u\ge {x}_v$, 
 then $\lambda (G') > \lambda (G)$. 
 \end{lemma}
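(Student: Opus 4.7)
The plan is to combine Rayleigh's variational principle with the strict positivity of the Perron vector on a connected graph. Throughout, let $\mathbf{x}$ denote the Perron vector of $G$ normalized arbitrarily, and let $\lambda=\lambda(G)$. By Perron--Frobenius applied to the connected graph $G$, every entry of $\mathbf{x}$ is strictly positive.

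First I would establish the weak inequality $\lambda(G')\ge \lambda(G)$ via the Rayleigh quotient. Since the edit removes the symmetric entries indexed by $(v,w_i)$ and adds those indexed by $(u,w_i)$ for $1\le i\le s$, the difference of quadratic forms collapses to
\[
\mathbf{x}^{\top}A(G')\mathbf{x} - \mathbf{x}^{\top}A(G)\mathbf{x} \;=\; 2(x_u-x_v)\sum_{i=1}^{s}x_{w_i}.
\]
Because $x_u\ge x_v$ by hypothesis and each $x_{w_i}>0$, the right-hand side is nonnegative, and Rayleigh's inequality $\lambda(G')\ge \mathbf{x}^{\top}A(G')\mathbf{x}/\mathbf{x}^{\top}\mathbf{x}$ yields $\lambda(G')\ge \lambda(G)$. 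Note that the hypotheses $w_i\in N(v)\setminus N(u)$ ensure that $G'$ is a simple graph (no multi-edges are created and $w_i\ne u$).

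Next I would upgrade this to a strict inequality by a standard equality-case analysis. Suppose for contradiction that $\lambda(G')=\lambda(G)$. Then $\mathbf{x}$ attains the Rayleigh quotient of $A(G')$ at the top eigenvalue, so $\mathbf{x}$ is itself a $\lambda$-eigenvector of $A(G')$. Writing the eigenvalue equation for $G'$ at the vertex $v$ and comparing with the eigenvalue equation for $G$ at $v$ gives
\[
\lambda x_v \;=\; \sum_{w\sim_{G'}v}x_w \;=\; \sum_{w\sim_{G}v}x_w-\sum_{i=1}^{s}x_{w_i} \;=\; \lambda x_v-\sum_{i=1}^{s}x_{w_i},
\]
so $\sum_{i=1}^{s}x_{w_i}=0$. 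Since $s\ge 1$ (otherwise there is nothing to prove) and each $x_{w_i}>0$, this is a contradiction, and therefore $\lambda(G')>\lambda(G)$.

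The only real subtlety, and the single place one must be careful, is legitimizing that $\mathbf{x}$ serves as a top eigenvector of $A(G')$ in the equality case; this uses the fact that if a nonnegative vector achieves the Rayleigh quotient equal to the spectral radius of a symmetric matrix then it is a corresponding eigenvector. Everything else is a direct computation supported by Perron--Frobenius positivity, so I expect no further obstacles.
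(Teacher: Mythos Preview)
Your argument is correct and is precisely the standard proof of this classical edge-switching lemma: compute the change in the Rayleigh quotient, use Perron--Frobenius positivity of $\mathbf{x}$ on the connected graph $G$ to get $\lambda(G')\ge\lambda(G)$, and then rule out equality by observing that equality would force $\mathbf{x}$ to be a $\lambda$-eigenvector of $A(G')$, contradicting the eigen-equation at $v$. The paper does not supply its own proof of this lemma; it simply quotes it from \cite{WXH2005}, so there is nothing further to compare.

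One very small quibble: you assert that $w_i\in N(v)\setminus N(u)$ forces $w_i\neq u$, but strictly speaking if $u\in N(v)$ then $u\in N(v)\setminus N(u)$ as well. The exclusion $w_i\neq u$ is really an implicit hypothesis of the lemma (needed so that $G'$ has no loops), not a consequence of the stated one; in every application in the paper this is clearly satisfied.
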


The second lemma gives an upper bound 
 on the sum of squares of degrees in a graph with given size. 
 Here, we provide a two-line simple proof for convenience. 
  
 \begin{lemma}[See \cite{Das2003,NZ2021b}] \label{lem-deg-squ}
If $G$ is a graph with $m$ edges, then  
\[  \sum_{v\in V(G)}d^2(v) \le m^2 +m. \]  
 \end{lemma}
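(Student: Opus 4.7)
My plan is to reduce the sum of squared degrees to a sum over edges and then apply a simple per-edge bound that relies on the graph being simple. The starting point is the identity
\[ \sum_{v\in V(G)} d^2(v) = \sum_{\{u,v\}\in E(G)} \bigl(d(u) + d(v)\bigr), \]
which holds because, on the right-hand side, each vertex $w$ contributes $d(w)$ once for every one of its $d(w)$ incident edges, yielding a total contribution of $d(w)^2$.

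Next I would bound $d(u) + d(v)$ uniformly for each edge $\{u,v\}\in E(G)$. The key observation is that $d(u) + d(v)$ counts the edges incident to $u$ plus the edges incident to $v$, where the edge $\{u,v\}$ itself is counted twice and every other edge is counted at most once. Since $G$ is simple, $\{u,v\}$ is the only edge containing both endpoints, so by inclusion-exclusion
\[ d(u) + d(v) = \bigl|\{e\in E(G) : u\in e\ \text{or}\ v\in e\}\bigr| + 1 \le m + 1. \]
Summing this bound over all $m$ edges gives $\sum_{\{u,v\}\in E(G)}(d(u)+d(v)) \le m(m+1) = m^2 + m$, which combined with the identity above yields the desired inequality.

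There is no real obstacle: the argument is a two-line manipulation, and the only subtlety is the simple-graph hypothesis needed to control the double-counting of the edge $\{u,v\}$ in the per-edge step. I would simply present the identity, the per-edge inequality with its short justification, and the final summation.
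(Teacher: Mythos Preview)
Your proposal is correct and follows essentially the same approach as the paper's proof: both rewrite $\sum_{v} d^2(v)$ as $\sum_{\{u,v\}\in E(G)}(d(u)+d(v))$, bound each summand by $m+1$, and sum over the $m$ edges. The paper's version is terser, simply asserting $d(u)+d(v)\le m+1$ without the inclusion-exclusion justification you supply, but the argument is identical.
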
 
 
 \begin{proof}
For each $\{u,v\} \in E(G)$, we have 
$d(u) + d(v)\le m+1$. 
Then 
\[ \sum_{v\in V(G)} d^2(v) 
= \sum_{\{u,v\} \in E(G)} (d(u) + d(v)) 
\le m(m+1). \qedhere \]  
 \end{proof}

Recall that 
we have shown that $G$ is obtained from 
 $T_{n,2}$ by embedding $q $ edges into its partite sets. 
Without loss of generality, we may assume that $|S|\le |T|$. 
Then $|S|=\lfloor {n}/{2}\rfloor$ 
and $|T|=\lceil {n}/{2}\rceil$. 
For notational convenience, 
we denote $x_S:=\sum_{u\in S} x_u$ and $x_T:=\sum_{u\in T} x_u$. Let $H_1$ and $H_2$ be the subgraphs that are embedded 
into the vertex parts $S$ and $T$, respectively. 
To maximize $\lambda (G)$, 
we need to prove that $H_1=\varnothing$ 
and $H_2=K_{1,q}$.

\begin{lemma} \label{lem-star}
For each $i=1,2$, if $H_i \neq \varnothing$, then $H_i $ is a star 
or a $4$-cycle. 
\end{lemma}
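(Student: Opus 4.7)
The plan is to argue by contradiction against the extremality of $\lambda(G)$ via the weight-shifting operation of Lemma \ref{lem-WXH} applied to the Perron vector $\mathbf{x}$ of $G$. The opening observation is that $H_i$ must be triangle-free: any triangle inside $H_i$ would itself be a triangle of $G$, in addition to the $q\lfloor n/2\rfloor$ triangles already produced by pairing each edge of $H_i$ with every vertex of the opposite part, and this would violate the equality $t(G)=q\lfloor n/2\rfloor$ already established.

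Call a pair $(u,v)$ of vertices in the same part as $H_i$ \emph{admissible} if $x_u\ge x_v$, the set $W:=N_{H_i}(v)\setminus(N_{H_i}(u)\cup\{u\})$ is nonempty, and replacing every edge $\{v,w\}$ by $\{u,w\}$ for $w\in W$ keeps $H_i$ triangle-free. Since the operation lies entirely in one part, $e(H_1)$ and $e(H_2)$ are unchanged, so $t(G)$ is preserved; meanwhile Lemma \ref{lem-WXH} forces $\lambda$ to strictly increase, contradicting the extremality of $G$. The whole proof thus reduces to classifying the triangle-free $H_i$ that admit no admissible pair.

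I then run through the possibilities. If $H_i$ is disconnected, pick $u$ of largest $x$-value in one component and $v$ in another; since $N_{H_i}(u)$ and $N_{H_i}(v)$ are disjoint, the swap cannot create a triangle and the pair is admissible, a contradiction. Hence $H_i$ is connected. If $H_i$ is connected and has a vertex $v^*$ adjacent to every other vertex of $V(H_i)$, then triangle-freeness forces $H_i=K_{1,|V(H_i)|-1}$, a star. Otherwise $H_i$ is a connected, triangle-free, non-star graph, so it contains an induced $P_4$, a cycle $C_k$ with $k\ge 5$, or $C_4$ with at least one additional edge; in each such subcase a careful choice of $(u,v)$ among nearby vertices produces an admissible swap. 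The only configuration stable against every potential swap is the pure $C_4$: the eigenvalue equation gives $x_a=x_c$ and $x_b=x_d$ for the four cycle vertices, and every candidate pair $(u,v)$ either has $W=\varnothing$ or would create a chord triangle upon swapping.

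The main obstacle is the $C_4$-with-extra-edge case, in which a direct one-step swap tends to create an obstruction triangle; here Lemma \ref{eigen-entry} (which makes all Perron entries within $O(q/n)$ of one another) and the bound $|V(H_i)|\le 2q$ leave enough room to select a swap direction that avoids creating any triangle. This case analysis, though lengthy, is purely combinatorial and completes the dichotomy $H_i\in\{K_{1,k},\,C_4\}$.
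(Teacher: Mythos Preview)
Your approach via single swaps through Lemma~\ref{lem-WXH} has a genuine gap in the ``$C_4$ with an extra edge'' subcase. Take $H_i=K_{2,3}$ with sides $\{a,b\}$ and $\{c,d,e\}$. From $\lambda x_v = x_T + \sum_{w\in N_{H_i}(v)} x_w$ one gets $x_a=x_b>x_c=x_d=x_e>x_f$ for every $f$ in the same colour class but outside $V(H_i)$. Now run through all pairs $(u,v)$ with $x_u\ge x_v$: if $u,v$ lie on the same side of $K_{2,3}$ then $N_{H_i}(v)\setminus(N_{H_i}(u)\cup\{u\})=\varnothing$; if $u\in\{a,b\}$ and $v\in\{c,d,e\}$ then the only candidate is the other vertex of $\{a,b\}$, and the swap produces the triangle $\{a,b,d\}$; and no pair with $u$ or $v$ outside $V(H_i)$ can satisfy both $x_u\ge x_v$ and $W\neq\varnothing$. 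Hence $K_{2,3}$ admits \emph{no} admissible swap, so your assertion that ``the only configuration stable against every potential swap is the pure $C_4$'' is false. The appeal to Lemma~\ref{eigen-entry} cannot rescue this: the obstruction is purely combinatorial (every permitted swap creates a triangle), and closeness of the Perron entries never reverses the strict inequalities $x_a>x_c>x_f$ that block a swap in the opposite direction. The same phenomenon occurs for $C_4$ with a pendant edge and for every $K_{r,s}$ with $r,s\ge 2$.

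The paper's proof sidesteps exactly this difficulty. It uses Lemma~\ref{lem-WXH} only to secure $d_{H_i}(v^*)\ge 3$ (or $H_i\cong C_4$); thereafter it abandons local swaps and compares $G$ directly with the graph $G'$ in which the $b$ edges of $H_i\setminus\{v^*\}$ are deleted and $b$ fresh leaves are attached to $v^*$ at unused vertices of the colour class, so that the embedded graph becomes a star $K_{1,a+b}$. The strict inequality $\lambda(G')>\lambda(G)$ is then obtained via the double-eigenvector identity $\mathbf{x}^{\top}\mathbf{y}(\lambda'-\lambda)=\mathbf{x}^{\top}\bigl(A(G')-A(G)\bigr)\mathbf{y}$, combined with the degree-square bound of Lemma~\ref{lem-deg-squ}. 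This global, quantitative comparison is what eliminates $K_{2,3}$ and its relatives, and it has no counterpart in your outline.
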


\begin{proof}
Without loss of generality, 
we shall show that if $H_1\neq \varnothing$, 
then $H_1$ is a star. 
We choose $v^* \in S$ as a vertex such that 
${x}_{v^*} = \max\{ {x}_v: v\in S\}$. 
Clearly, the vertex $v^*$ can not be an isolated vertex of $G[S]$.  
First of all, we consider the case $1\le e(H_1)\le 3$. 
By enumerating, we have $H_1\in \{K_2, 2K_2, K_{1,2}, 3K_2, P_3\cup K_2, P_4,K_{1,3}\}$. 
We claim that $v^*$ is a dominating vertex of $H_1$. 
Otherwise, if there is a vertex $u \in V(H_1)\setminus N(v^*)$ and an edge $\{u,v\}\in E(H_1)$, 
then let $G'$ be 
the graph obtained from $G$ by deleting the edge 
$\{u,v\}$, and adding the edge $\{u,v^*\}$. 
Observe that $G'$ has at most $q \lfloor {n}/{2} \rfloor$ 
triangles. By Lemma \ref{lem-WXH}, 
we get $\lambda (G') > \lambda (G)$, a contradiction.  
Thus $v^*$ is a dominating vertex.
So $H_1$ is a star. 

Now, we consider the case $e(H_1)\ge 4$. 
Similar to the above argument,  
we can move the edges incident to $v^*$. 
Hence we can obtain that $d_{H_1}(v^*)\ge 3$, unless $H_1\cong C_4$. 
For the former case, we need to show that $H_1$ is a star.  
We denote $a= d_{H_1}(v^*)\ge 3$ and $H_1'=H_1\setminus  \{v^*\}$. 
If $e(H_1')=0$, then $H_1$ is a star, so we are done. 
If $b:=e(H_1')\ge 1$, then we define $G'$ as the graph 
obtained from $G$ by deleting 
all $b$ edges in $H_1'$, and then adding 
$b$ new edges from $v^*$ to $w_1,\ldots ,w_b 
\in S\setminus V(H_1)$. 
We can see that the edges in $G'[S]$ form a star $K_{1,a+b} $, 
 and $G^{\prime} $ 
contains at most $q \lfloor {n}/{2} \rfloor$ triangles. 
 Thus, we have $\lambda (G^{\prime} ) \leq \lambda (G)$. 
 We denote $\lambda = \lambda (G)$ and 
 $\lambda^{\prime}=\lambda\left(G^{\prime}\right)$. 
 Let $\bm{y}= (y_1, y_2, \ldots, y_n )^{\mathrm{T}}$ be the Perron vector of $G^{\prime}$.
By the symmetry, we have $y_v=y_{w_1}$ for every vertex $v \in V(K_{1,a+b}) \backslash \{v^*\}$. Thus, we have 
$\lambda^{\prime} y_{v^*}= y_T +(a+b) y_{w_1}$ and $\lambda^{\prime} y_{w_1}=y_T +y_{v^*}$, where 
$y_T=\sum_{u\in T} y_u$. Then 
\[  y_{v^*}=\frac{\lambda^{\prime}+a+b}{\lambda^{\prime}+1} y_{w_1} \geq \frac{\lambda+a+b}{\lambda+1} y_{w_1}. \] 
Using the double-eigenvector technique, we have
\begin{eqnarray}
\bm{x}^{\mathrm{T}} \bm{y}\left(\lambda^{\prime}-\lambda\right)  
&=& \bm{x}^{\mathrm{T}} \left( A(G') - A(G) \right)\bm{y} \notag \\[3mm]
&=& \sum_{\{u,v\}\in E(G')} (x_uy_v + x_vy_u) 
-  \sum_{\{u,v\}\in E(G)} (x_uy_v + x_vy_u) \notag \\ 
& = &\sum_{j=1}^b \left(x_{v^*} y_{w_j}+x_{w_j} y_{v^*}\right)-\sum_{\{u, v\} \in E(H_1')}\left(x_u y_v+x_v y_u\right) \notag \\ 
& \geq & b\cdot x_{v^*}y_{w_1} + b \cdot \frac{\lambda+a+b}{\lambda+1} x_{w_1}y_{w_1}-\sum_{\{u,v\} \in E(H_1')}\left(x_u+x_v\right) y_{w_1} , \label{eq3}
\end{eqnarray}
where we used the fact 
$x_{w_1}=\cdots =x_{w_b}$ and $y_v=y_{w_1}=\cdots =y_{w_b}$ for any $v\in V(H_1')$.

Next, we are going to prove 
\[   \bm{x}^{\mathrm{T}} \bm{y}(\lambda '- \lambda )\cdot  \lambda >0. \]  
Since $N_G(v^*) = T\cup N_{H_1}(v^*)$, 
we have  
\begin{equation}  \label{eq4}
\lambda x_{v^*} \geq x_{T}+a x_{w_1}. 
\end{equation}
 Moreover, as $N_{G}(w_1)=T$ and $\lambda x_{w_1}=x_{T}$, 
we have 
\begin{equation}  \label{eq5}
\frac{\lambda+a+b}{\lambda+1} \lambda x_{w_1} \geq 
\left( \lambda+a+b- \frac{3}{2} \right) x_{w_1}= 
x_{T}+ \left(a+b- \frac{3}{2} \right) x_{w_1}. 
\end{equation} 
Using the double counting, we get 
 \[  \sum_{\{u, v\} \in E(H_1')} \lambda \left(x_u+x_v\right) 
 = \sum_{v \in V(H_1')} 
 d_{H_1'}(v) \lambda x_v. \]  
For each $v \in V(H_1')$, we have $\lambda x_v 
\leq x_{T}+ (d_{H_1'}(v)+1 ) x_{v^*}$. 
By Lemma \ref{lem-deg-squ}, 
it follows that $\sum_{v \in V(H_1')} d_{H_1'}^2(v) \leq b^2+b$. Consequently, we get 
\begin{eqnarray}
\sum_{\{u, v\} \in E(H_1')} \lambda (x_u+x_v ) 
& \leq & \sum_{v \in V(H_1')} d_{H_1'}(v) x_T + 
\sum_{v \in V(H_1')} \left( d_{H_1'}^2 (v) 
+d_{H_1'}(v) \right) x_{v^*} \notag \\
& \leq & 2 b x_T + (b^2+3 b ) x_{v^*} . \label{eq6}
\end{eqnarray} 
Combining the above inequalities, we can obtain 
\begin{eqnarray*} 
\bm{x}^{\mathrm{T}} \bm{y} 
(\lambda '-\lambda )\cdot  \lambda & \overset{(\ref{eq3})}{\geq} & 
b y_{w_1}\cdot \lambda x_{v^*}
 + b y_{w_1} \cdot \frac{\lambda+a+b}{\lambda+1}  \lambda x_{w_1} -\sum_{w \in E(H_1')} \lambda (x_u+x_v) y_{w_1} \\
&\overset{(\ref{eq4},\ref{eq5},\ref{eq6})}{\ge} & by_{w_1} \left(2x_T + (2a+b-\tfrac{3}{2} )x_{w_1} \right)  
- (2bx_T  + (b^2+3b)x_{v^*} ) y_{w_1} \\[3mm]
& = & b(2 a+b- \tfrac{3}{2}) x_{w_1} y_{w_1} 
- b(b+3 ) x_{v^*} y_{w_1}.
\end{eqnarray*}
To obtain a contradiction, we would like to show that $\bm{x}^{\mathrm{T}} \bm{y} 
(\lambda '-\lambda )\cdot  \lambda>0$. 
It is sufficient to prove that 
$ \frac{x_{v^*}}{x_{w_1}} 
< \frac{2a+b - {3}/{2}}{b+3}  $.
Note that $\lambda x_{v^*} = x_T + \sum_{u\in N_{H_1}(v^*)} x_u \le x_T + a x_{v^*}$. Then 
$  x_{v^*} \leq \frac{x_{T}}{\lambda- a}$.
Observe that $x_{w_1}=\frac{x_T}{\lambda}$. 
Therefore, it yields that 
\[ \frac{x_{v^*}}{ x_{w_1} } \le 
\frac{\lambda}{ \lambda - a} . \]  
Since $a \geq 3$, we get $(2 a+b- \frac{3}{2} )
- (b+3 )=2 a - \frac{9}{2}  > 1$. 
Recall that $\lambda (G) \ge \lambda (T_{n,2,q}) > {n}/{2}$ 
and $b\le q-3$. 
Then for $n\ge 2(q^2+q)$, it follows that  
\[  \frac{x_{v^*}}{x_{w_1}} \le 
\frac{\lambda}{ \lambda - a}  < 1+ 
\frac{1}{b+3} < \frac{2a+b - {3}/{2}}{b+3}, \]  
which leads to $\bm{x}^{\mathrm{T}} \bm{y} 
(\lambda '-\lambda )\cdot  \lambda>0$ and 
$\lambda' > \lambda$, a contradiction. 
Thus,  $H_1$ is a star. 
\end{proof}

In Lemma \ref{lem-star}, we expect to prove that 
$H_i$ is a star, instead of a $4$-cycle. 
So we need to make 
an easy comparison between these two cases. 

\medskip 
If $n$ is odd, then $|S|= \frac{n-1}{2}$ 
and $|T|= \frac{n+1}{2}$. 
Since $G$ contains exactly $q\lfloor {n}/{2}\rfloor$ triangles, 
 the $q$ edges must be embedded into $T$. By Lemma \ref{lem-star}, we must embed 
a star $K_{1,q}$ or a cycle $C_4$ (in the case of $q=4$).  
By computation, embedding a copy of $K_{1,4}$ attains the maximum spectral radius. 
Indeed, we can compute that $\lambda (T_{n,2,4}) $ 
is the largest root of 
\[  t(x):= 9 - 10 n + n^2 + 4 x - 4 n x - (15 x^2)/4 - (n^2 x^2)/4 + x^4.  \]
Let $G$ be the graph obtained from  $T_{n,2}$ 
by embedding a $C_4$ into $T$. Then $\lambda (G)$ is the largest root of 
\[ g(x):= 7/2 - 4 n + n^2/2 + x/4 - (n^2 x)/4 - 2 x^2 + x^3. \]
It is not hard to check that $\lambda (G)< \lambda (T_{n,2,4})$. 
This completes the proof for odd $n$.

\medskip 
If $n$ is even, then $|S|= |T| = {n}/{2}$. 
Similarly, the embedded graphs $H_1$ and 
$H_2$ are stars or $4$-cycles by Lemma \ref{lem-star}. 
The remaining task is to show that 
embedding only one star into $S$ or $T$ 
will lead to a graph with the maximum spectral radius.

\begin{lemma} \label{lem-x-S} 
Let $n$ be even and $G'$ be the graph obtained from $T_{n,2}$ by embedding a star $K_{1,q}$ into the part $S$.  
Let $\lambda ' = \lambda (G')$ and $\bm{y}$ be the Perron vector of $G'$. 
Then 
\[  \frac{\lambda' y_V}{\lambda' + |S| + \frac{2q}{\lambda' -q}} \le y_T \le  \frac{\lambda' y_V}{\lambda' + |S| + \frac{2q}{\lambda' }} . \]
\end{lemma}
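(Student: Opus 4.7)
The plan is to exploit the symmetries of $G'$ to reduce the Perron vector to four parameters, derive an exact formula for $y_T$ in terms of $y_V$, and then put it in the claimed form by bounding an auxiliary quantity via the local eigenvalue equations on the embedded star.

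By the automorphisms of $G'$ permuting the leaves $w_1,\dots,w_q$, the remaining vertices of $S$, and the vertices of $T$ among themselves, the Perron vector takes only four distinct values: $\alpha:=y_{v^*}$, $\beta:=y_{w_i}$, $\gamma:=y_u$ for $u\in S\setminus(\{v^*\}\cup\{w_1,\dots,w_q\})$, and $\tau:=y_t$ for $t\in T$. Since every $t\in T$ is adjacent to all of $S$, the eigenvalue equation yields $\lambda'\tau=y_S$, and combined with $y_T=|T|\tau=|S|\tau$ this gives the exact identity $y_S=\lambda' y_T/|S|$. Adding $y_T$ to both sides produces $y_V=(\lambda'+|S|)y_T/|S|$, which rearranges as
\[
y_T=\frac{\lambda' y_V}{\lambda'+|S|+\eta}, \qquad \text{where}\quad \eta:=\frac{(\lambda')^2-|S|^2}{|S|}.
\]
Thus the lemma reduces to proving the two-sided bound $2q/\lambda'\le \eta\le 2q/(\lambda'-q)$.

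To express $\eta$ via the star parameters, I would sum the eigenvalue equation $\lambda' y_v=\sum_{u\sim v}y_u$ over $v\in S$. The $S$--$T$ edges contribute $|S|\, y_T$, while edges inside the star contribute $q\beta$ (from $v^*$) plus $\alpha$ counted $q$ times (one from each leaf). Hence $\lambda' y_S=|S|y_T+q(\alpha+\beta)$, and substituting $y_S=\lambda' y_T/|S|$ gives $\eta=q(\alpha+\beta)/y_T$. It therefore suffices to show $2y_T/\lambda'\le \alpha+\beta\le 2y_T/(\lambda'-q)$. The lower bound follows immediately from the eigenvalue equations $\lambda'\alpha=y_T+q\beta\ge y_T$ and $\lambda'\beta=y_T+\alpha\ge y_T$ at $v^*$ and $w_1$. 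For the upper bound, add the two equations to obtain $\lambda'(\alpha+\beta)=2y_T+\alpha+q\beta$, and use the elementary inequality $\alpha+q\beta\le q(\alpha+\beta)$ (valid for $q\ge 1$) to conclude $(\lambda'-q)(\alpha+\beta)\le 2y_T$. Dividing is legitimate because $\lambda'\ge \lambda(T_{n,2,q})>n/2>q$ under the standing hypothesis $n\ge 300q^2$.

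The argument is a careful bookkeeping exercise once the symmetry reduction is in place. There is no real obstacle; the only mildly delicate step is verifying $\lambda'>q$ so that the final division delivers a positive upper bound, together with the elementary inequality $\alpha+q\beta\le q(\alpha+\beta)$ for $q\ge 1$. I expect the entire proof to occupy only a short paragraph in the final write-up.
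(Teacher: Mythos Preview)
Your proof is correct and follows essentially the same route as the paper: both reduce the claim to bounding $\lambda' y_S - |S|y_T = \sum_{u\in S} d_S(u)\,y_u$ (your $q(\alpha+\beta)$), and then sandwich this star contribution between $2q\,y_T/\lambda'$ and $2q\,y_T/(\lambda'-q)$ using the eigenvalue equations at the center and leaves. The only cosmetic difference is that you first invoke the symmetry reduction to four values and add the equations for $\alpha,\beta$, whereas the paper bounds each $y_u$ individually by $y_T/\lambda'\le y_u\le y_{v^*}\le y_T/(\lambda'-q)$ and then sums; the content is the same.
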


\begin{proof}
By definition, we have $y_V= y_S + y_T$. It is sufficient to prove that 
\[  |S| y_T + 2q \cdot \frac{y_T}{\lambda'} \le 
 \lambda' y_S \le |S| y_T + 2q\cdot  \frac{y_T}{\lambda' -q}.  \]
 For each $v\in S$, we have 
 $\lambda' y_v = y_T + \sum_{u\in N_S(v)} y_u$. Then 
 \begin{eqnarray*}
 \lambda' y_S = \sum_{v\in S} \lambda' y_v 
 = |S| y_T + \sum_{v\in S} \sum_{u\in N_S(v)} y_u 
 = |S|y_T + \sum_{u\in S} d_S(u) y_u.
 \end{eqnarray*}
 Note that $\lambda' y_u \ge y_T$ for each $u\in S$. Then 
 \[  \min_{u\in S} \{ y_u \} \ge \frac{y_T}{\lambda'}. \]  
We denote $y_{v^*}=\max_{u\in S}\{ y_u \}$. 
Similarly, we have $\lambda ' y_{v^*} = 
y_T + \sum_{w\in N_S(v^*)} y_w \le y_T + d_S(v^*) y_{v^*} = 
y_T + qy_{v^*}$, 
which yields $y_{v^*}\le \frac{y_T}{\lambda' -q}$. 
Thus, it follows that 
\[  |S|y_T + 2e(S) \frac{y_T}{\lambda' }  \le 
\lambda' y_S \le |S|y_T + 2e(S) \frac{y_T}{\lambda' -q}.  \]
The desired inequality holds immediately 
by  $e(S)= e(K_{1,q}) = q$. 
\end{proof}

\begin{lemma}
There exists exactly one $i\in \{1,2\}$ such that 
$H_i\neq \varnothing$ and 
$H_i\cong K_{1,q}$. 
\end{lemma}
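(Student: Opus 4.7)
The plan is to argue by contradiction. Since $e(H_1)+e(H_2)=q\ge 1$, at least one of $H_1,H_2$ is already nonempty; what must be excluded is that \emph{both} are. Assuming they are, I will exhibit a graph $G'$ on the same vertex set with at most $q\lfloor n/2\rfloor$ triangles and $\lambda(G')>\lambda(G)$, contradicting the maximality of $\lambda(G)$.

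The first step handles the $C_4$ possibility left open by Lemma \ref{lem-star}. If $q=4$ and some $H_i\cong C_4$, a characteristic-polynomial comparison completely analogous to the one already performed in the odd-$n$ case (comparing the largest roots of the polynomials $t(x)$ and $g(x)$) shows that replacing the $C_4$ by $K_{1,4}$ on the same side strictly increases the spectral radius while preserving the triangle count. After this reduction each nonempty $H_i$ is a star; write $H_1=K_{1,q_1}$ with center $u_1\in S$ and $H_2=K_{1,q_2}$ with center $u_2\in T$, and assume $q_1\ge q_2\ge 1$, $q_1+q_2=q$.

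For the core step, let $m_1,\ldots,m_{q_2}\in T$ be the leaves of $H_2$ and pick distinct non-star vertices $s_1,\ldots,s_{q_2}\in S$ (which exist since $n\ge 300q^2$). Define $G'$ by deleting the edges $\{u_2,m_j\}$ and adding the edges $\{u_1,s_j\}$ for $j=1,\ldots,q_2$; then $G'\cong T_{n,2,q}$ and has exactly $q\lfloor n/2\rfloor$ triangles. Let $\mathbf{x},\mathbf{y}$ be the Perron vectors of $G,G'$, respectively. Exploiting the equitable partition of $G'$, all $q$ leaves of the merged star share a common entry $y_L$ and all vertices of $T$ share a common entry $y_T^\star$, so the double-eigenvector identity gives
\begin{equation*}
\mathbf{x}^{\top}\mathbf{y}\,\bigl(\lambda(G')-\lambda(G)\bigr)
=q_2\bigl(x_{u_1}y_L+x_{s_1}y_{u_1}\bigr)-q_2\,y_T^\star\bigl(x_{u_2}+x_{m_1}\bigr).
\end{equation*}

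Verifying strict positivity of the right-hand side is the main obstacle. I would control the $x$-entries through the eigenvalue equations in $G$: the identities $\lambda(G)x_{u_2}=x_S+q_2x_{m_1}$ and $\lambda(G)x_{m_1}=x_S+x_{u_2}$ combine to give $x_{u_2}=\tfrac{\lambda(G)+q_2}{\lambda(G)+1}x_{m_1}$, an analogous relation holds between $x_{u_1}$ and a leaf of $H_1$, and $\lambda(G)x_{s_1}=x_T$. On the $G'$-side, Lemma \ref{lem-x-S} pins down the ratios $y_{u_1}/y_L$ and $y_L/y_T^\star$ up to an $O(q/n)$ correction, while Lemma \ref{eigen-entry} guarantees that every entry of both vectors lies in $1-O(q/n)$. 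The symmetric configuration $q_1=q_2$ would produce equality at the leading order, but the fact that the merged star concentrates more eigenvector mass at $u_1$ than the split configuration does generates a positive surplus of order $q_2/n$ in the right-hand side. For $n\ge 300q^2$ this surplus dominates the $O(q^2/n^2)$ error, yielding $\lambda(G')>\lambda(G)$, the desired contradiction. Consequently exactly one of $H_1,H_2$ is nonempty, and it must equal $K_{1,q}$.
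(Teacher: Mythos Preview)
There are two gaps. First, your $C_4$ reduction misses the relevant cases. Under the contradiction hypothesis both $H_1$ and $H_2$ are nonempty, so if some $H_i\cong C_4$ then $e(H_i)=4$ forces $q\ge 5$ (and $q=8$ if both are $C_4$); the clause ``if $q=4$'' is therefore vacuous in this setting. Moreover, the odd-$n$ polynomial comparison pitted $T_{n,2}+C_4$ against $T_{n,2,4}$ with the \emph{opposite side empty}; it does not automatically cover replacing a $C_4$ by $K_{1,4}$ while a star or another $C_4$ sits on the other side, so ``completely analogous'' is not justified. The paper sidesteps this entirely: it plants the new $K_{1,q}$ on fresh vertices of $S\setminus V(H_1)$ and bounds the deleted contribution only through $\sum_v d_{H_i}^2(v)\le q_i^2+q_i$, an inequality valid for both stars and $4$-cycles, so all three subcases (a)--(c) fall out of a single estimate.

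Second, in the star--star case the key inequality
\[
x_{u_1}y_L+x_{s_1}y_{u_1}\;>\;y_T^\star\bigl(x_{u_2}+x_{m_1}\bigr)
\]
is only asserted, not proved. Both sides equal $2+O(q/\lambda)$ after normalization, so the sign lives at the next order and must actually be computed. A careful expansion (via $x_{u_1}=\tfrac{(\lambda+q_1)x_T}{\lambda^2-q_1}$, $y_{u_1}=\tfrac{(\lambda'+q)\lambda'}{\lambda'^2-q}y_0$, and their analogues) does give a positive leading term of order $q_1/\lambda$, essentially because the left side accumulates corrections $q_1$ (from $x_{u_1}$) and $q$ (from $y_{u_1}$) while the right only picks up $q_2$, and $q_1+q>q_2$. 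But you still have to verify that this beats the $x_T-x_S$ contribution, which is $O\bigl((q_1-q_2)/\lambda^2\bigr)$ with the wrong sign when $q_1>q_2$. The sentence about ``concentrating more eigenvector mass at $u_1$'' is intuition, not a proof; Lemma~\ref{lem-x-S} controls the sum $y_T$, not the ratios $y_{u_1}/y_L$ or $y_L/y_T^\star$ you cite; and Lemma~\ref{eigen-entry} is stated for $G$, so applying it to the Perron vector of $G'$ needs separate justification (easy here since $G'=T_{n,2,q}$, but it should be said). Your choice of $G'$ (extending the existing star at $u_1$) can be made to work for case~(a) with more computation, but the paper's choice of a fresh star is what lets one treat the $C_4$ cases simultaneously.
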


\begin{proof} 
Suppose on the contrary that 
 both $H_1$ and $H_2$ are non-empty. 
By Lemma \ref{lem-star}, 
we know that $H_i$ is a star or a $4$-cycle. 
Thus, we have the following three cases: 
\begin{itemize}
\item[(a)]  $H_1\cong K_{1,q_1}$ and $H_2\cong 
K_{1,q_2}$, where $q_1+q_2=q$.  

\item[(b)] $H_1\cong C_4$ and $H_2\cong K_{1,q_2}$, where $4+q_2 =q$. 

\item[(c)]  $H_1\cong H_2\cong C_4$ in the case of $q=8$.

\end{itemize} 
By direct calculation, it is not hard to verify that $\lambda (G)< \lambda (T_{n,2,q})$. 
Next, we finish the proof in another way by using the double-eigenvector technique. 
We define $G^{\prime}$ to be the graph 
 obtained from $G$ by deleting all edges of $H_1$ and $H_2$, and adding a copy $H\cong K_{1,q}$ into $S \backslash V(H_1)$.  
 To obtain a contradiction, it suffices to show $\lambda (G') > \lambda (G)$. 
Recall that $x_S = \sum_{u\in S} x_u$ and $x_T=\sum_{u\in T} x_u$. 
Without loss of generality, 
 we may assume  that $x_{T} \geq x_{S}$. 
 Let $\lambda^{\prime}=\lambda\left(G^{\prime}\right)$ and $\bm{y}=\left(y_1, y_2, \ldots, y_n\right)^{\mathrm{T}}$ be the Perron vector of $G^{\prime}$. 
 To show $\lambda' > \lambda $, we shall prove 
 \[  \bm{x}^{\mathrm{T}}\bm{y} (\lambda '- \lambda) 
 \cdot \lambda \lambda ' >0. \]
 Since $G$ is the graph with the maximum spectral radius, 
 we have 
 $\lambda \geq \lambda^{\prime}  > \frac{n}{2}$.
 Observe that $H\cong K_{1,q}$ and 
$\sum_{v\in V(H)} d_H^2(v)= q^2+q$. 
For each $v\in S$, we have 
 \[  \lambda ' y_v = y_T + \sum_{u\in N_H(v)} x_u \ge 
 y_T + d_H(v) \cdot \min_{u\in S} \{ y_u \}. \] 
As $\lambda' y_u \ge y_T$ for any $u\in S$, we get 
$\min_{u\in S} \{ y_u\} \ge \frac{y_T}{ \lambda'}$.  
Using the double counting gives 
\begin{eqnarray*} 
\sum_{\{u,v\} \in E(H)} \lambda^{\prime} (y_u+y_v ) 
&= &  \sum_{v\in V(H)} d_{H}(v) \cdot \lambda ' y_v  \\
&\geq & \sum_{v \in V(H)} d_H(v) y_{T}+\sum_{v \in V(H)} d_H^2(v) \cdot \min _{u \in S} \{y_u\} \\ 
&\ge & 2 q y_T + (q^2+q ) \frac{y_T}{\lambda^{\prime}}. 
\end{eqnarray*} 
Note that $\lambda \geq \lambda^{\prime}$ 
and $V(H) \subseteq S \backslash V(H_1)$ 
is an independent set in $G$. 
Then $N_{G}(v)=T$ and $\lambda x_v=x_{T}$
 for each $v \in V(H)$.  
 It follows that
\begin{equation}  \label{eq-11}
\sum_{\{u, v\} \in E(H)} \lambda \lambda^{\prime}\left(x_u y_v+x_v y_u\right)=\sum_{\{u, v\} \in E(H)} \lambda^{\prime}\left(y_u+y_v\right) x_{T} 
\geq \left(2 q+\frac{q^2+q}{\lambda}\right) y_{T} x_{T} .
\end{equation}
Since $H_1$ is a star or a $4$-cycle, 
we have $\sum_{v \in V\left(H_1\right)} d_{H_1}^2(v) \le  q_1^2+q_1$. 
Let $x_{v^*} =\max\{x_u: u\in S\}$. 
Note that $\lambda x_{v^*} = x_T + \sum_{u\in N_{H_1}(v^*)} x_u \le x_T + q x_{v^*}$. Then 
\[   x_{v^*}  \leq \frac{x_{T}}{\lambda- q}. \]  
Therefore, we have
\begin{eqnarray*}  
\sum_{\{u, v\} \in E(H_1)} \lambda\left(x_u+x_v\right)  
&= & \sum_{v\in V(H_1)} d_{H_1}(v) \cdot \lambda x_v  \le  
\sum_{v\in V(H_1)} d_{H_1}(v) \bigl( x_T + d_{H_1}(v) x_{v^*} \bigr) \\
& \leq & \sum_{v \in V(H_1)} d_{H_1}(v) x_{T} 
+\sum_{v \in V(H_1)} d_{H_1}^2(v) x_{v^*}  \\
&\leq & 2 q_1 x_T 
+ (q_1^2+q_1 ) \frac{x_T}{\lambda-q} .
\end{eqnarray*} 
Note that $V(H_1)$ is an independent set of the new graph $G^{\prime}$.  
For every $v \in V(H_1)$, 
we have $N_{G^{\prime}}(v) =T$ and  $\lambda^{\prime} y_v=y_{T}$. Thus, we obtain
\begin{equation} \label{eq-22}
\sum_{\{u, v\} \in E(H_1)} \lambda \lambda^{\prime}\left(x_u y_v+x_v y_u\right) 
= \sum_{\{u ,v\} \in E(H_1)} \lambda\left(x_u+x_v\right) \cdot y_{T}  
\leq  \left(2 q_1+\frac{q_1^2+q_1}{\lambda- q}\right) x_T y_T.
 \end{equation}
Similarly, we can get $\max _{u \in T} x_u \leq \frac{x_S}{\lambda- q}$ and 
\begin{equation} \label{eq-33} 
\sum_{\{u, v\} \in E(H_2)} \lambda \lambda^{\prime}\left(x_u y_v+x_v y_u\right) 
\leq  \left(2 q_2+\frac{q_2^2+q_2}{\lambda- q}\right) x_S y_S.
  \end{equation}  
Considering the above inequalities, we can obtain 
\begin{eqnarray*}  
&& \bm{x}^{\mathrm{T}} \bm{y} 
(\lambda^{\prime}-\lambda )\cdot \lambda \lambda^{\prime}\\[3mm]
 & =& \sum_{\{u, v\} \in E(H)} \lambda \lambda^{\prime}\left(x_u y_v+x_v y_u\right)-\sum_{\{u, v\} \in E\left(H_1\right) \cup E\left(H_2\right)} \lambda \lambda^{\prime}\left(x_u y_v+x_v y_u\right) \\
& \overset{(\ref{eq-11},\ref{eq-22},\ref{eq-33})}{\geq} & \left(2 q+\frac{q^2+q}{\lambda}\right) x_T y_T 
- \left(2 q_1+\frac{q_1^2+q_1}{\lambda- q}\right) x_{T} y_T 
- \left(2 q_2+\frac{q_2^2+q_2}{\lambda- q}\right) x_{S} y_S \\
& = & \left(2 q_2+\frac{q^2+q}{\lambda}-\frac{q_1^2+q_1}{\lambda-q}\right) x_T y_ T 
-\left(2 q_2+\frac{q_2^2+q_2}{\lambda-q}\right) x_S y_S .
\end{eqnarray*} 
To show the right hand side is positive, 
we denote 
\[ c_1:=2 q_2+\frac{q^2+q}{\lambda} 
-\frac{q_1^2+q_1}{\lambda-q} \] 
 and 
 \[ c_2 :=2 q_2+\frac{q_2^2+q_2}{\lambda-q}. \] 
 By computation, we can check that $q_1q_2\ge q-1$ and 
 \[  c_1-c_2=\frac{2q_1q_2 \lambda - q^3-q^2}{\lambda (\lambda -q)} > \frac{(q-1)n - q^3-q^2}{(n/2 +q)n/2} > \frac{1}{5n}, \]
 where we used the assumption 
 ${n}/{2} < \lambda < {n}/{2} +q$ 
 and $n\ge 121q^2$. 
 Observe that $T$ is an independent set 
 in $G'$. Then for each $u\in T$, we have 
 $\lambda' y_u = y_S$.   Summing over all $u\in T$ yields 
 $|T| y_S = \lambda' y_T= \lambda' (y_V -y_S)$. 
Thus we get  
 \[  y_S= \frac{\lambda' y_V}{\lambda ' + |T|} . \]  
Using Lemma \ref{lem-x-S} and $|S|=|T|= \frac{n}{2}$,
it follows that  
  \begin{eqnarray*}  
  y_S-y_T \le  
  \frac{\lambda' y_V}{\lambda ' + |T|} - \frac{\lambda' y_V}{\lambda' + |S| + \frac{2q}{\lambda' -q}} 
  <  \frac{2q}{n^2(n/2 -q)} \lambda^{\prime} y_V. 
 \end{eqnarray*}  
Combining with the assumption that $x_T \geq x_S$, 
  we get 
\begin{eqnarray*}   
&& \bm{x}^{\mathrm{T}} \bm{y} 
(\lambda^{\prime}-\lambda) \cdot  \lambda \lambda^{\prime}  \\[3mm]  
&\ge &  c_1x_Ty_T - c_2 x_S y_S   \\[3mm]
& \geq & \left(c_2+\frac{1}{5n}\right)  x_S 
\left(y_S -\frac{2q}{n^2(n/2 -q)} \lambda^{\prime} y_V \right)  -c_2 x_S y_S \\
& > & \frac{ x_S}{5n} \cdot \frac{\lambda' y_V}{n+q}  - 
 ( 2q-1)x_S \cdot \frac{2q}{n^2(n/2 -q)} \lambda^{\prime} y_V , 
\end{eqnarray*}  
where the last inequality holds by 
 $y_S = \frac{\lambda' y_V}{\lambda ' + |T|} > \frac{\lambda' y_V}{n+q}$ and 
$c_2  < 2(q-1) + \frac{q^2}{n/2 -q}$. 
Hence,  we can obtain $\bm{x}^{\mathrm{T}} \bm{y} 
(\lambda^{\prime}-\lambda) \cdot  \lambda \lambda^{\prime} >0$ for $n\ge 121q^2$. This a contradiction 
with $\lambda^{\prime}>\lambda$. 
Thus, there exists exactly one $i\in \{1,2\}$ such that 
$H_i\neq \varnothing$ and then $H_i\cong K_{1,q}$. 
\end{proof}

\section{Proof of Theorem \ref{spectral-BC}}

\label{sec6}

Now, we are in a position to 
 prove Theorem \ref{spectral-BC}. 

\begin{proof}[{\bf Proof of Theorem \ref{spectral-BC}}]
Assume that $G$ is a graph on $n$ vertices with
 $\lambda (G)\ge \lambda (T_{n,2})$ and $\tau_3(G)\ge s$, 
 where $s\ge 1$.  If $s=1$, then 
 $G\neq T_{n,2}$. Theorem \ref{thmNZ2021} implies $G$ has 
 $\lfloor \frac{n}{2} \rfloor -1$ triangles. 
 For $s=2$, the authors \cite{LFP-count-bowtie} proved that $G$ has at least $n-3$ triangles. 
Now, we consider the case $s\ge 3$. 
We may assume that $t(G)\le \frac{1}{2} s{(n-1)}$. 
 Otherwise, we are done. 
 
 \medskip 
\noindent 
{\bf Claim 1.} 
We have $e(G)> \lfloor n^2/4\rfloor -3s$. 
There is a partition $V(G) =S\cup T$ such that 
$e(S) + e(T)< 6s$, $e(S,T)>\lfloor n^2/4\rfloor -9s$ and 
$ \frac{n}{2} - 3\sqrt{s} \le |S|, |T| \le \frac{n}{2} +3\sqrt{s}$. 

\begin{proof}[Proof of Claim 1]
 Since $\lambda (T_{n,2}) = \sqrt{\lfloor n^2/4\rfloor} 
 > \frac{1}{2} (n-1)$, by 
 Lemma \ref{thm-BN-CFTZ-NZ}, we have 
\[   e(G)\ge \lambda^2- \frac{3t}{\lambda} 
>  \lambda^2 - \frac{6t}{n-1} \ge   
\Big\lfloor \frac{n^2}{4} \Big\rfloor -3s . \]  
If $G$ is $6s$-far from being bipartite, 
then using Theorem \ref{thm-far-bipartite}, we get 
\[ t(G)\ge \frac{n}{6} 
\left(e(G) + 6s - \frac{n^2}{4} \right) > s\frac{n}{2}, \]  
which contradicts with the assumption.  
 Thus, $G$ is not $6s$-far from being bipartite. 
Namely, there exists a vertex partition 
$V(G)=S\cup T$ such that 
\[ e(S) + e(T)< 6s. \]  
Consequently, we get 
\[  e(S,T) = e(G) - e(S) - e(T) > \Big\lfloor \frac{n^2}{4} \Big\rfloor - 9s. \]  
By the symmetry, we may assume that $|S|\le |T|$. 
Suppose on the contrary that $|S|< \frac{n}{2} - 3\sqrt{s}$. 
Then $|T| = n-|S| > \frac{n}{2} +3\sqrt{s} $. 
It follows that $e(S,T) \le |S||T| \le  \lfloor n^2/4\rfloor -9s$, 
which is a contradiction. Thus, we have 
$ \frac{n}{2} - 3\sqrt{s} \le |S|, |T| \le \frac{n}{2} +3\sqrt{s}$.   
\end{proof}

Next, we are going to refine the above bipartition. 

\medskip 
\noindent 
{\bf Claim 2.} We have $e(S) + e(T)=s$. 

\begin{proof}[Proof of Claim 2]
Observe that every triangle of $G$ must contain an edge from $G[S]$ or $G[T]$. If $e(S) + e(T)\le s-1$, then 
all triangles of $G$ can be covered by a vertex set of 
at most $s-1$ vertices, contradicting with $\tau_3(G)\ge s$. 
Now, suppose that $e(S) + e(T)=k$ for some integer $k\ge s+1$.  Note that  
each edge of $G[S]$ or $G[T]$ can yield 
at least ${n}/{2} -3\sqrt{s}$ triangles. 
From Claim 1, we know that  $|S||T| - e(S,T)\le \lfloor n^2/4\rfloor - e(S,T)<9s$. Then 
$G[S,T]$ misses at most $(9s -1)$ edges between 
$S$ and $T$,  and each missing edge  destroys at most $k$ triangles. 
So we get $t(G)\ge k(\frac{n}{2} -3\sqrt{s}) - (9s-1)k \ge k (\frac{n}{2} - 3\sqrt{s} -9s+1) > \frac{1}{2}s(n-1)$, 
where the last inequality holds for every $s\ge 3$ and $n\ge 28s^2$. 
This contradicts with the assumption $t(G)\le \frac{1}{2}s(n-1)$. 
Thus, we have $e(S) + e(T)=s$, as claimed. 
\end{proof}

From the above claims,  
we obtain 
$e(S,T)= e(G) - s > \lfloor n^2/4\rfloor - 4s$. Then 
\[ \frac{n}{2} - 2 \sqrt{s} \le |S|, |T| \le \frac{n}{2} +2 \sqrt{s}. \]  
If $s=3$, then $\lceil \frac{n}{2} \rceil -3\le 
|S|,|T|\le \lfloor \frac{n}{2} \rfloor +3$.  
Since there are less than $4s$ missing edges in $G[S,T]$, Claim 2 implies that 
$ t(G)\ge s(\lceil \frac{n}{2} \rceil -3) -4s \cdot s 
\ge \frac{1}{2}sn - 5s^2$. 
If $s\ge 4$, then a similar argument shows that $t(G)\ge s(\frac{n}{2} - 2\sqrt{s}) - 4s^2 \ge \frac{1}{2}sn - 5s^2$, as needed. 
\end{proof}

\section{Concluding remarks}

\label{sec7}

Recall that $t(G)$ denotes the number of triangles in a graph $G$. 
A special case of an aforementioned 
result of Bollob\'{a}s and Nikiforov \cite{BN2007jctb} 
states that 
\[  t(G) \ge \frac{n^2}{12}\left( \lambda -\frac{n}{2}\right). \]
In fact, a careful examination indicates that the above inequality holds strictly. Motivated by this observation, 
one may ask the following problem.

\begin{problem} 
What is the best constant $C>0$ such that $ t(G) \ge C n^2 \left(\lambda - \frac{n}{2} \right)$?
\end{problem}

\noindent 
{\bf Remark.} 
By the clique density theorem of Reiher \cite{Rei2016}, 
we know that if $\varepsilon >0$ and $G$ is a graph on $n$ vertices with $e(G)\ge (\frac{1}{4} + \varepsilon)n^2$, then $G$ has at least 
$(\frac{1}{2} + o(1)) \varepsilon n^3$ triangles, where $o(1)\to 0$ whenever 
$\varepsilon \to 0$. With the aid of this powerful result, 
one can easily prove that if $G$ has at least $(\frac{1}{4} + \varepsilon )n^2$ edges, where we set $\varepsilon := \frac{1}{2}(\frac{\lambda}{n} - \frac{1}{2})$, then $G$ has at least $(\frac{1}{4} + o(1))n^2(\lambda - \frac{n}{2})$ triangles. Moreover, the graph $T_{n,2,1}$ shows 
that the coefficient $\frac{1}{4}$ is achievable. 
Recently, Shengtong Zhang told us that taking $G=T_{n,3}$ yields $C\le \frac{2}{9}$.

\medskip 
Another question as to how many triangles appear at least in any $n$-vertex graph with given spectral radius turns out to be interesting.
For instance, for any graph $G$ with 
$\lambda (G)= \gamma n$, where $\gamma \in (\frac{1}{2}, \frac{2}{3}]$, that is, $\lambda (T_{n,2})<\lambda (G) \le \lambda (T_{n,3})$, 
  there might exist a graph, say $L$, whose shape is approximately between $T_{n,2}$ and $T_{n,3}$   
 such that $\lambda(L) = \gamma n$ and $t(L)\le t(G)$. 
Formally, we now define the minimizer $L$ as follows.  
For $s\in \mathbb{N}$ and $\alpha >0$, 
we define $L_{n,s,\alpha}$ as a complete multipartite graph with 
$V(L_{n,s,\alpha})=V_1 \cup \cdots V_s \cup V_{s+1}$, 
where $|V_1|=\cdots =|V_s|= \frac{n(1+ \alpha )}{s+1}$ and 
$|V_{s+1}|= \frac{n(1-s\alpha)}{s+1}$. 
Next, we speculate that $L_{n,s,\alpha}$ 
contains the minimum number of triangles 
among all graphs with given spectral radius.

\begin{conjecture}[Spectral triangle density conjecture]
Let $\gamma \in (\frac{1}{2},1)$ 
and $G$ be an $n$-vertex graph with  $\lambda (G)\ge \gamma n$. 
Then $t(G)\ge t(L_{n,s,\alpha})$, where 
 $s\ge 2$ is an integer with 
$\gamma \in (\frac{s-1}{s},\frac{s}{s+1}]$ and 
 $\alpha>0 $ is a maximal number such that 
$\lambda (L_{n,s,\alpha}) \ge \gamma n$. 
\end{conjecture}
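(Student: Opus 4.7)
The plan is to emulate the strategy Reiher used for the edge-density clique density theorem, but with the edge-count constraint replaced by a spectral one. As a preliminary, I would first verify that for each $s\ge 2$ and $\gamma\in(\tfrac{s-1}{s},\tfrac{s}{s+1}]$, there is a unique $\alpha\in[0,1/s)$ solving $\lambda(L_{n,s,\alpha})=\gamma n$; this follows because $\lambda(L_{n,s,\alpha})$ is a continuous strictly decreasing function of $\alpha$ on $[0,1/s]$, ranging from $\tfrac{s}{s+1}n$ down to $\tfrac{s-1}{s}n$. With $n_1=\tfrac{n(1+\alpha)}{s+1}$ and $n_{s+1}=\tfrac{n(1-s\alpha)}{s+1}$, the triangle count
\[
t(L_{n,s,\alpha})=\binom{s}{3}n_1^{3}+\binom{s}{2}n_1^{2}n_{s+1}
\]
is then an explicit cubic in $\alpha$, so both sides of the conjectured inequality are concrete quantities.

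Next I would try to reduce to the case of complete multipartite graphs via a Zykov/Kelmans-type symmetrization guided by the Perron eigenvector $\mathbf{x}$. If $u,v\in V(G)$ are non-adjacent and $x_u\ge x_v$, then replacing the neighborhood of $v$ by that of $u$ does not decrease $\lambda(G)$, by Lemma \ref{lem-WXH}. The change in the triangle count equals $t(u)-t(v)$ (where $t(w)$ denotes the number of triangles through $w$), which is of indefinite sign; by iterating such moves, preferring those in which $t(u)\le t(v)$ whenever available, one hopes to arrive at a graph whose vertex set partitions into twin classes, i.e., a complete multipartite graph. If full reduction fails, a weaker stability reduction of the Section \ref{sec4}--Section \ref{sec5} type (first showing the extremal graph is close in edit distance to a complete multipartite graph, then eliminating residual perturbations by an argument like the one around Lemma \ref{lessYn2q}) may suffice as a substitute.

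Once restricted to complete multipartite graphs $K_{n_1,\ldots,n_r}$, I would minimize $t=\sum_{i<j<k}n_in_jn_k$ subject to $\sum_i n_i=n$ and the implicit constraint $\sum_{i=1}^{r} n_i/(\lambda+n_i)=1$ with $\lambda=\gamma n$. A Lagrange-multiplier analysis on the relaxed continuous variables $n_i\ge 0$ should show that any interior critical point has at most two distinct nonzero part sizes; together with the range $\gamma\in(\tfrac{s-1}{s},\tfrac{s}{s+1}]$, this would force $r=s+1$, with $s$ equal larger parts and one smaller part, thereby identifying the minimizer as $L_{n,s,\alpha}$.

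The main obstacle is the symmetrization step: unlike pure edge-counts, the triangle count is not monotone under Zykov symmetrization, so one cannot simply drive $t$ down and $\lambda$ up simultaneously all the way to a complete multipartite graph. A promising workaround is a two-move local argument in which a compound edge swap is engineered to keep $\lambda$ fixed (via a simultaneous adjustment of two non-adjacent pairs of vertices with prescribed Perron weights) while strictly decreasing $t$; however, verifying that such compound moves exist throughout the reduction appears to be the spectral analogue of the difficulty that necessitated flag algebras in Reiher's argument. A secondary, more technical obstacle is that the transcendental equation defining $\lambda(L_{n,s,\alpha})$ admits no closed form for general $s$, so the finer spectral comparisons must be carried out implicitly, leading to nonlinear estimates that are delicate to make uniform in $n$ and $\gamma$.
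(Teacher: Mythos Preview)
The statement you address is labelled as a \emph{conjecture} in the paper; it appears in the concluding remarks as an open problem, and the paper offers no proof or proof sketch for it. There is therefore nothing in the paper to compare your proposal against.

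As a research outline your proposal is reasonable in its broad architecture (reduce to complete multipartite graphs, then optimize over part sizes), but it is not a proof, and you yourself identify the decisive gap: the Zykov/Kelmans symmetrization you invoke via Lemma~\ref{lem-WXH} increases $\lambda$ but does \emph{not} control $t(G)$ in the needed direction. Replacing $N(v)$ by $N(u)$ changes the triangle count by $t(u)-t(v)$, and there is no reason this quantity correlates with $x_u-x_v$; so the iteration need not terminate at a complete multipartite graph while keeping $t$ at or below its starting value. Your proposed ``compound move'' fix (swap two non-adjacent pairs so as to hold $\lambda$ fixed and decrease $t$) is exactly the kind of local-move argument that is known to be insufficient for the edge version of the clique density problem, which is why Razborov and Reiher had to resort to flag algebras. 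Nothing in your outline explains why the spectral constraint would make such local moves succeed where the edge constraint did not.

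A second, smaller gap: even granting the reduction to complete multipartite graphs, your Lagrange-multiplier step only locates interior critical points. You must also handle the boundary (some $n_i=0$, i.e.\ fewer parts) and, more importantly, the choice of $r$: nothing you wrote excludes a minimizer with $r\neq s+1$ parts, and the constraint $\sum_i n_i/(\lambda+n_i)=1$ alone does not pin down $r$ once part sizes are allowed to be unequal.
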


 Nosal \cite{Nosal1970}  
proved that every graph $G$ with
$m$ edges and $\lambda (G) > \sqrt{m}$ 
contains a triangle; see, e.g., \cite{Niki2002cpc,Ning2017-ars}. 
In 2023, Ning and Zhai \cite{NZ2021}
showed that if
$  \lambda (G)\ge \sqrt{m}$,
then $t(G) \ge \lfloor \frac{\sqrt{m}-1}{2} \rfloor$, unless $G$ is a  complete bipartite graph. 
We propose the following conjecture. 

\begin{conjecture} \label{conj-72}
If $G$ is a graph with $m$ edges and 
\[  \lambda (G)\ge \frac{1+\sqrt{4m-3}}{2}, \]  
then $G$ has at least $\frac{m-1}{2}$ triangles, 
with equality if and only if $G=K_2\vee \frac{m-1}{2}K_1$. 
\end{conjecture}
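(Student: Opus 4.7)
\medskip
\noindent
\textbf{Proof proposal for Conjecture \ref{conj-72}.}
The plan is to first apply Lemma \ref{thm-BN-CFTZ-NZ} to obtain a bound of the correct order, and then bridge the remaining factor $3/2$ gap via a book-finding argument. The hypothesis $\lambda \ge (1+\sqrt{4m-3})/2$ is equivalent to $\lambda(\lambda-1) \ge m-1$, which rewrites as $\lambda^2 - m \ge \lambda - 1$. Substituting into Lemma \ref{thm-BN-CFTZ-NZ} yields
\[
  t(G) \;\ge\; \frac{\lambda(\lambda^2-m)}{3} \;\ge\; \frac{\lambda(\lambda-1)}{3} \;\ge\; \frac{m-1}{3},
\]
which has the right order of magnitude but is off by a factor of $3/2$.

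To close this gap, I would attempt to prove the following structural strengthening: under the spectral hypothesis, there exists an edge $\{u,v\} \in E(G)$ with $|N(u) \cap N(v)| \ge (m-1)/2$. Such a claim implies the conjecture immediately, since the triangles on this spine edge already account for $(m-1)/2$ of them. The reduction is natural because the expected extremal graph $K_2 \vee \tfrac{m-1}{2}K_1$ has a single spine edge through which every triangle passes, of book size exactly $(m-1)/2$. Moreover, the equality case of Lemma \ref{thm-BN-CFTZ-NZ} (complete bipartite graphs) is ruled out here because $\lambda > \sqrt{m}$ forces $G$ to be non-bipartite by Nosal's theorem.

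To locate such a spine edge, I would use the Perron vector $\mathbf{x}$ normalized so that $\max_v x_v = 1$, and choose $u^*$ with $x_{u^*} = 1$. From $A^2 \mathbf{x} = \lambda^2 \mathbf{x}$ one obtains
\[
  \lambda^2 \;=\; d(u^*) + \sum_{v \ne u^*} |N(u^*) \cap N(v)| \, x_v,
\]
which, combined with the edge count $m = t(u^*) + d(u^*) + e(N(u^*), V \setminus N[u^*]) + e(V \setminus N[u^*])$, produces only the crude bound $t(u^*) \ge \lambda^2 - m \ge \lambda - 1$. The refinement I propose is to restrict the sum on the right to neighbours $v$ of $u^*$ with near-maximal Perron entry and to bootstrap using the weighted book totals $\sum_{v \in N(u^*)} |N(u^*) \cap N(v)| x_v$, followed by a convexity or rearrangement step that extracts a single edge of book size at least $(m-1)/2$. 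The double-eigenvector technique deployed in Section \ref{sec5}, adapted from \cite{FTZ2024}, appears well-suited to formalising this and to pinning down the unique equality case.

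The main obstacle is precisely that the first three power sums $\sum_i \lambda_i^k$ for $k = 1, 2, 3$, combined with the hypothesis $\lambda^2 - \lambda \ge m-1$, only give the factor $1/3$; distinguishing the book from every other graph meeting these three-moment constraints seems to require additional input. A promising source is the fourth moment $\mathrm{tr}(A^4) = \sum_v d(v)^2 + 2\sum_{u \ne v} |N(u) \cap N(v)|^2$, which simultaneously reads off degree concentration and book structure, and whose value on $B_{(m-1)/2}$ is anomalously small compared to any non-book competitor near the spectral threshold. Quantifying this sharply and coupling it with the eigenvector equations to isolate the unique extremal graph is where I expect the heaviest lifting to occur.
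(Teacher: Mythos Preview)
The paper does not contain a proof of this statement: it is explicitly posed as an open \emph{conjecture} in Section~\ref{sec7}. The only progress the authors record is the remark immediately following it, where they derive the weaker bound $t(G)\ge (m-1)/3$ directly from Lemma~\ref{thm-BN-CFTZ-NZ} and state that ``the difficulty of Conjecture~\ref{conj-72} lies in deriving the tight bound $t(G)\ge \frac{m-1}{2}$.'' Your first paragraph reproduces exactly this observation, so on that part you and the paper are aligned.

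Everything after your first paragraph is genuinely beyond what the paper establishes. Your proposed reduction to finding a spine edge with $|N(u)\cap N(v)|\ge (m-1)/2$ is natural given the extremal example, but note that it is strictly stronger than the conjecture itself: a graph could in principle accumulate $(m-1)/2$ triangles without any single edge lying in that many. So if that structural claim fails for some graph near the threshold, the conjecture could still hold, and you would need a fallback. The fourth-moment idea is plausible as a separating invariant, but you have not yet shown how to convert ``$\mathrm{tr}(A^4)$ is small on the book'' into a usable lower bound on $t(G)$ for competitors; that coupling is where the argument is currently a sketch rather than a proof. In short, your proposal is not wrong, but it is a research plan rather than a proof, and the paper offers nothing further to compare it against.
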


\noindent 
{\bf Remark.} 
Roughly speaking, Conjecture \ref{conj-72} reveals that 
in a graph $G$ with $\lambda (G) \approx \sqrt{m} + \frac{1}{2}$, 
the number of triangles 
jumps from $\frac{1}{2}\sqrt{m}$  
to $ \frac{1}{2}m$. 
In addition, we can see that if $\lambda (G)\ge (1+ \varepsilon)\sqrt{m}$, 
then $t(G)> \frac{2\varepsilon}{3}m^{3/2}$. 
In fact, it is not difficult to obtain a weaker bound than Conjecture \ref{conj-72}. Indeed, it follows from Lemma \ref{thm-BN-CFTZ-NZ} that 
$t(G)> \frac{1}{3}\lambda (\lambda^2-m) \ge \frac{m-1}{3}$. 
Thus, the difficulty of Conjecture \ref{conj-72} lies in 
deriving the tight bound $t(G)\ge \frac{m-1}{2}$.

\medskip 
Theorem \ref{thm-LS1975} asserts that 
 every graph $G$ with $\lfloor n^2/4\rfloor +q$ 
 edges contains at least $q \lfloor n/2\rfloor$ triangles for every $q< n/2$. 
In Theorem \ref{thm-Yn2q}, 
we proved a spectral version for all $q\le \frac{1}{11}\sqrt{n}$. 
This bound on $q$ is tight up to a constant factor (Example \ref{exam}). 
Under our framework, it seems feasible to improve the constant factor slightly. However, new ideas are needed to determine the exact bound on $q$. It is ambitious to propose the following problem. 

\begin{problem}  \label{prob7-4}
 What is the supremum $\delta \in (0,0.8)$ such that Theorem \ref{thm-Yn2q} holds for every integer $q< \delta \sqrt{n}$? Additionally, what is the optimal range of $q$ in Theorem \ref{thm-sp-LS}?  
 \end{problem}

In 2017, 
 Pikhurko and Yilma \cite{PY2017} 
 proved that for every color-critical graph 
 $F$ with $\chi (F)=r+1$, 
there exist $\delta =\delta (F)>0$
and $n_0$ such that
for all $n\ge n_0$  and $q < \delta n$,
if $G$ is an $n$-vertex graph
with $e(T_{n,r}) +q$ edges and it achieves the minimum 
number of copies of $F$, 
then $G$ is obtained from the Tur\'{a}n graph $T_{n,r}$ by adding $q$ new edges.  Furthermore, 
by Theorem \ref{thm-LS1975}, we see that $\delta (K_3)= \frac{1}{2}$. A result of Lov\'{a}sz and Simonovits \cite{LS1975,LS1983} implies that $\delta(K_{r+1})=\frac{1}{r}$ for every $r\ge 2$.  
Moreover, Pikhurko and Yilma \cite{PY2017} also determined the exact value of $\delta$ 
for a number of color-critical graphs, such as,  odd cycles $\delta (C_{2k+1}) =\frac{1}{2}$, and 
cliques with one edge removed $\delta (K_{r+2} -e) = \frac{r-1}{r^2}$.  Similarly, it is also meaningful to determine the exact value of $\delta (F)$ for various graphs $F$ in the spectral setting.

\section*{Acknowledgements} 

We are grateful to Xiaocong He and Loujun Yu for carefully reading an early draft of the paper.  
Yongtao Li would like to thank Dheer Noal Desai for inspiring discussions on spectral graph problems. 
 Lihua Feng was supported by the National Natural Science Foundation of China (Nos. 12271527 and 12471022), and Yuejian Peng was supported by 
 the National Natural Science Foundation of Hunan Province (No. 2025JJ30003) and 
the National Natural Science Foundation of China (No. 12571363).


\frenchspacing

\end{document}